\renewcommand{\a}{\alpha}
\renewcommand{\b}{\beta}
\newcommand{\e}{\epsilon}
\renewcommand{\d}{{\delta}}
\newcommand{\G}{\Gamma}
\renewcommand{\l}{\lambda}
\newcommand{\z}{\zeta}
\numberwithin{equation}{section}
\newtheorem{remark}[subsection]{Remark}
\newtheorem{theorem}{Theorem}
\newtheorem{lemma}[theorem]{Lemma}
\theoremstyle{definition}
\numberwithin{theorem}{section} 
\numberwithin{equation}{section}
\numberwithin{table}{section}
\DeclareMathOperator{\Res}{Res}
\newcommand{\g}{\gamma}
\def\proof{\@ifnextchar[{\@oproof}{\@nproof}}
\def\@oproof[#1][#2]{\trivlist\item[\hskip\labelsep\textit{#2 Proof of\
#1.}~]\ignorespaces}
\def\@nproof{\trivlist\item[\hskip\labelsep\textit{Proof.}~]\ignorespaces}
\begin{document}
	\title[Lambert series of logarithm and the derivative of Deninger's function $R(z)$]{Lambert series of logarithm, the derivative of Deninger's function $R(z)$ and a mean value theorem for $\zeta\left(\frac{1}{2}-it\right)\zeta'\left(\frac{1}{2}+it\right)$}
	\author{Soumyarup Banerjee, Atul Dixit and Shivajee Gupta}
	\thanks{2020 \textit{Mathematics Subject Classification.} Primary 11M06, 11N37; Secondary 33E12.\\
		\textit{Keywords and phrases.} Lambert series, Deninger's function,  mean value theorems, asymptotic expansions}
	\address{Department of Mathematics, Indian Institute of Technology Gandhinagar, Palaj, Gandhinagar 382355, Gujarat, India}\email{soumyarup.b@iitgn.ac.in}
	\address{Department of Mathematics, Indian Institute of Technology Gandhinagar, Palaj, Gandhinagar 382355, Gujarat, India}\email{adixit@iitgn.ac.in}
		\address{Department of Mathematics, Indian Institute of Technology Gandhinagar, Palaj, Gandhinagar 382355, Gujarat, India}\email{shivajee.o@iitgn.ac.in}
	
	\medskip
	\begin{abstract}
		An explicit transformation for the series $\sum\limits_{n=1}^{\infty}\displaystyle\frac{\log(n)}{e^{ny}-1},$ Re$(y)>0$, which takes $y$ to $1/y$, is obtained for the first time. This series transforms into a series containing $\psi_1(z)$, the derivative of Deninger's function $R(z)$. In the course of obtaining the transformation, new important properties of $\psi_1(z)$ are derived, as is a new representation for the second derivative of the two-variable Mittag-Leffler function $E_{2, b}(z)$ evaluated at $b=1$. Our transformation readily gives the complete asymptotic expansion of $\sum\limits_{n=1}^{\infty}\displaystyle\frac{\log(n)}{e^{ny}-1}$ as $y\to0$. An application of the latter  is that it gives the asymptotic expansion of $ \displaystyle\int_{0}^{\infty}\zeta\left(\frac{1}{2}-it\right)\zeta'\left(\frac{1}{2}+it\right)e^{-\delta t}\, dt$ as $\delta\to0$.
		\end{abstract}
	
	\dedicatory{Dedicated to Christopher Deninger on account of his $64^{\textup{th}}$ birthday}
	\maketitle
	
	\section{Introduction}\label{intro}
	Eisenstein series are the building blocks of modular forms and thus lie at the heart of the theory. In the case of the full modular group $\textup{SL}_{2}\left(\mathbb{Z}\right)$, the Eisenstein series of even integral weight $k\geq2$ are given by
	 \begin{equation}\label{eisenstein_defn}
	 E_k(z):=1-\frac{2k}{B_k}\sum_{n=1}^{\infty}\frac{n^{k-1}q^n}{1-q^n},
	 \end{equation}
 where $q=e^{2\pi iz}$ with $z\in\mathbb{H}$ (the upper-half plane) and $B_k$ are the Bernoulli numbers. They satisfy the modular transformations
\begin{align}
E_k(z+1)&=E_k(z)\hspace{5mm}(k\geq 2),\nonumber\\
 E_{k}\left(\frac{-1}{z}\right)&=z^kE_{k}(z)\hspace{2mm}(k>2), \hspace{5mm}
 E_{2}\left(\frac{-1}{z}\right)=z^2E_{2}(z)+\frac{6z}{\pi i}\label{inversion}.
\end{align}
The series on the right-hand side of \eqref{eisenstein_defn} is an example of what is known as a \emph{Lambert series} whose general form is
\begin{equation}\label{equivalentlambert}
\sum_{n=1}^{\infty}\frac{a(n)q^{n}}{1-q^n}=\sum_{n=1}^{\infty}\frac{a(n)}{e^{ny}-1}=\sum_{n=1}^{\infty}(1*a)(n)e^{-ny},
\end{equation}
where $q=e^{-y}$ with Re$(y)>0$, and $a(n)$ is an arithmetic function with $(1*a)(n)=\sum_{d|n}a(d)$ as the Dirichlet convolution.

For Re$(s)>1$, the Riemann zeta function is defined by $\zeta(s)=\sum_{n=1}^{\infty}n^{-s}$. Ramanujan \cite[p.~173, Ch. 14, Entry 21(i)]{ramnote}, \cite[p.~319-320, formula (28)]{lnb}, \cite[p.~275-276]{bcbramsecnote} derived a beautiful transformation involving the Lambert series associated with $a(n)=n^{-2m-1}, $ $m\in\mathbb{Z}\backslash\{0\}$,  and the odd zeta value $\zeta(2m+1)$, namely, for Re$(\a), \textup{Re}(\b)>0$ with $\a\b=\pi^2$,
	\begin{align}\label{zetaodd}
		\a^{-m}\left\{\frac{1}{2}\zeta(2m+1)+\sum_{n=1}^{\infty}\frac{n^{-2m-1}}{e^{2\a n}-1}\right\}&=(-\b)^{-m}\left\{\frac{1}{2}\zeta(2m+1)+\sum_{n=1}^{\infty}\frac{n^{-2m-1}}{e^{2\b n}-1}\right\}\nonumber\\
		&\quad-2^{2m}\sum_{j=0}^{m+1}\frac{(-1)^jB_{2j}B_{2m+2-2j}}{(2j)!(2m+2-2j)!}\a^{m+1-j}\b^j.
	\end{align}
	Along with the transformations of the Eisenstein series in \eqref{inversion}, this formula also encapsulates the transformations of the corresponding Eichler integrals of the Eisenstein series as well as the transformation property of the Dedekind eta-function. The literature on this topic is vast with many generalizations and analogues for other $L$-functions, for example, \cite{banerjeekumar}, \cite{berndtacta1975}, \cite{berndtrocky}, \cite{bradley2002}, \cite{kzf}, \cite{hhf1}, \cite{DGKM}, \cite{dkk},  \cite{dixitmaji1}, \cite{guptamaji}, \cite{katayama}, and \cite{komori}. See also the recent survey article \cite{berndtstraubzeta}.

Recently, Kesarwani, Kumar and the second author \cite[Theorem 2.4]{dkk} obtained a new generalization of \eqref{zetaodd}, namely, for $\textup{Re}(y)>0$ and any complex $a$ such that $\textup{Re}(a)>-1$,
	\begin{align}\label{maineqn}
		&\sum_{n=1}^\infty  \sigma_a(n)e^{-ny}+\frac{1}{2}\left(\left(\frac{2\pi}{y}\right)^{1+a}\mathrm{cosec}\left(\frac{\pi a}{2}\right)+1\right)\zeta(-a)-\frac{1}{y}\zeta(1-a)\nonumber\\
		&=\frac{2\pi}{y\sin\left(\frac{\pi a}{2}\right)}\sum_{n=1}^\infty \sigma_{a}(n)\Bigg(\frac{(2\pi n)^{-a}}{\Gamma(1-a)} {}_1F_2\left(1;\frac{1-a}{2},1-\frac{a}{2};\frac{4\pi^4n^2}{y^2} \right) -\left(\frac{2\pi}{y}\right)^{a}\cosh\left(\frac{4\pi^2n}{y}\right)\Bigg),
	\end{align}
where ${}_1F_{2}(a;b, c;z)$ is the generalized hypergeometric function
\begin{equation*}
	{}_1F_2(a;b, c; z):=\sum_{n=0}^{\infty}\frac{(a)_n}{(b)_n(c)_n}\frac{z^n}{n!},
\end{equation*}
where $z\in\mathbb{C}$ and $(a)_n=a(a+1)\cdots(a+n-1)$. They \cite[Theorem 2.5]{dkk} also analytically continued this result to Re$(a)>-2m-3$, $m\in\mathbb{N}\cup\{0\}$, and in this way, they were able to get as corollaries not only Ramanujan's formula \eqref{zetaodd} and the transformation formula for the Dedekind eta-function but also new transformations when $a$ is an even integer. Moreover, they showed \cite[Equation (2.19)]{dkk} that letting $a\to0$ in \eqref{maineqn} gives the following transformation of Wigert \cite[p.~203, Equation (A)]{wig0}:
\begin{align}\label{kanot}
	\sum_{n=1}^{\infty}\frac{1}{e^{ny}-1}=\frac{1}{4}+\frac{\gamma-\log(y)}{y}+\frac{2}{y}\sum_{n=1}^{\infty}\left\{\log\left(\frac{2\pi n}{y}\right)-\frac{1}{2}\left(\psi\left(\frac{2\pi in}{y}\right)+\psi\left(-\frac{2\pi in}{y}\right)\right)\right\},
\end{align}
where $\psi(z):=\Gamma'(z)/\Gamma(z)$ is the logarithmic derivative of the gamma function commonly known as the digamma function. Wigert \cite[p.~203]{wig0} called this transformation \emph{`la formule importante'} (an important formula). Indeed, it is important, for, if we let $y\to 0$ in any angle $|\arg(y)|\leq\lambda$, where $\lambda<\pi/2$, it gives the complete asymptotic expansion upon using \eqref{asypsi}, that is,
\begin{align}\label{wiganobef}
	\sum_{n=1}^{\infty} d(n)e^{-ny}\sim\frac{1}{4}+\frac{(\gamma-\log (y))}{y}-\sum_{n=1}^{\infty}\frac{B_{2n}^2y^{2n-1}}{(2n)(2n)!},
\end{align}
where $\gamma$ is Euler's constant, which, in turn, readily implies \cite[p.~163, Theorem 7.15]{titch}
\begin{align}\label{wigano}
\sum_{n=1}^{\infty} d(n)e^{-ny}=\frac{1}{4}+\frac{\gamma-\log y}{y}-\sum_{n=0}^{N-1}\frac{B_{2n+2}^2}{(2n+2)(2n+2)!} y^{2n+1}+O(|y|^{2N}).
\end{align}

The study of the moments
\begin{equation}\label{moment1}
M_{k}(T):=\int_{0}^{T}\left|\zeta\left(\frac{1}{2}+it\right)\right|^{2k}\, dt
\end{equation}
 is of fundamental importance in the theory of the Riemann zeta function. It is conjectured that $M_{k}(T)\sim C_k T\log^{k^2}(T)$ as $T\to\infty$ for positive constants $C_k$ although for $k=1$ and $2$, this has been proved by Hardy and Littlewood \cite{hl} and Ingham \cite{ingham} respectively. Such results are known as mean-value theorems for the zeta function. The importance of the study of moments lies, for example, in the fact that the estimate $M_k(T)=O_{k,\epsilon}(T^{1+\epsilon})$ for every natural number $k$ is equivalent to the Lindel\"{o}f hypothesis \cite{soundararajan} (see also \cite{harlit}).

 Another set of mean-value theorems which plays an important role in the theory is the one concerning the asymptotic behavior of the smoothly weighted moments, namely,
\begin{equation}\label{moment2}
\int_{0}^{\infty}\left|\zeta\left(\frac{1}{2}+it\right)\right|^{2k}e^{-\delta t}\, dt
\end{equation}
as $\delta\to0$. The relation between the two types of moments in \eqref{moment1} and \eqref{moment2} is given by a result  \cite[p.~159]{titch} which states that if $f(t)\geq0$ for all $t$ and for a given positive $m$,
\begin{equation*}
	\int_{0}^{\infty}f(t)e^{-\delta t}\, dt\sim\frac{1}{\delta}\log^{m}\left(\frac{1}{\delta}\right)
\end{equation*}
as $\delta\to0$, then 
\begin{equation*}
	\int_{0}^{T}f(t)\, dt\sim T\log^{m}(T)
\end{equation*}
as $T\to\infty$. For an excellent survey on the moments, we refer the reader to \cite{ivic}. 

The asymptotic expansion of $\sum_{n=1}^{\infty} d(n)e^{-ny}$ as $y\to0$ in \eqref{wigano} allows us to obtain the asymptotic estimate for the smoothly weighted second moment, namely, as $\delta\to0$, for every natural number $N$, we have \cite[p.~164, Theorem 7.15 (A)]{titch}
\begin{align}\label{sw2nd}
	\int_{0}^{\infty}\left|\zeta\left(\frac{1}{2}+it\right)\right|^{2}e^{-\delta t}\, dt=\frac{\gamma-\log(2\pi\delta)}{2\sin(\delta/2)}+\sum_{n=0}^{N}c_n\delta^n+O(\delta^{N+1}),
\end{align}
where the  $c_n$ are constants and the constant implied by the big-O depends on $N$. A simple proof of \eqref{sw2nd} was given by Atkinson \cite{atkinson}. In fact, \eqref{wiganobef} gives the complete asymptotic expansion for this moment.

The primary goal of this paper is to give a non-trivial application of \eqref{maineqn}. Note that the complex variable $a$ in \eqref{maineqn} enables differentiation of \eqref{maineqn} with respect to $a$, which is not possible in \eqref{zetaodd} or other such known results. Indeed, it is an easy affair to check that differentiating the series on the left-hand side of \eqref{maineqn} with respect to $a$ and then letting $a=0$ gives  $\sum\limits_{n=1}^{\infty}\displaystyle\frac{\log(n)}{e^{ny}-1}$, which, in view of \eqref{equivalentlambert}, satisfies
\begin{align*}
	\sum_{n=1}^{\infty}\frac{\log(n) }{e^{ny}-1}
	=\sum_{n=1}^{\infty}\log\big(\prod_{d|n}d\big)e^{-ny}
	=\frac{1}{2}\sum_{n=1}^{\infty}d(n)\log(n)e^{-ny}.
\end{align*}
Here, in the last step, we used an elementary result $\prod_{d|n}d=n^{d(n)/2}$; see, for example, \cite[Exercise 10, p.~47]{Apostol}.

What is surprising though is, differentiating the right-hand side of \eqref{maineqn} with respect to $a$ and then letting $a=0$ leads to an explicit and interesting  series involving a well-known special function.  This special function deserves a separate mention and hence after its brief introduction here, the literature on it is discussed in detail in Section \ref{derivative-deninger}.

 In a beautiful paper \cite{deninger}, Deninger comprehensively studied the function $R:\mathbb{R}^{+}\to\mathbb{R}$ uniquely defined by the difference equation 
\begin{equation*}
	R(x+1)-R(x)=\log^{2}(x)\hspace{8mm}(R(1)=-\zeta''(0)).
\end{equation*}
The function $R(x)$ is an analogue of $\log(\Gamma(x))$ in view of the fact that the latter satisfies the difference equation $f(x+1)-f(x)=\log(x)$ with the initial condition $f(1)=0$. As noted in \cite[Remark (2.4)]{deninger}, $R$ can be analytically continued to
\begin{equation*} 
	\mathbb{D}:=\mathbb{C}\backslash\{x\in\mathbb{R}|\  x\leq0\}.
\end{equation*}
 The special function which appears in our main theorem, that is, in Theorem \ref{loglamb}, is $\psi_1(z)$, which is essentially the derivative of Deninger's function $R(z)$ (see \eqref{psi1r} below). For $z\in \mathbb{D}$, it is given by
\begin{equation}\label{psi1z}
	\psi_1(z)=-\gamma_1-\frac{\log(z)}{z}-\sum_{n=1}^{\infty}\left(\frac{\log(n+z)}{n+z}-\frac{\log(n)}{n}\right),
\end{equation}
where $\gamma_1$ is the first Stieltjes constant.

We are now ready to state the main result of our paper which transforms the Lambert series of logarithm into an infinite series consisting of $\psi_1(z)$.
 	\begin{theorem}\label{loglamb}
 	Let $\psi_1(z)$ be given in \eqref{psi1z}.
 	Then, for $\textup{Re}(y)>0$,
 	\begin{align}\label{translog}
 		\sum_{n=1}^{\infty}\frac{\log(n) }{e^{ny}-1}&=-\frac{1}{4}\log(2\pi)+\frac{1}{2y}\log^{2}(y)-\frac{\gamma^2}{2y}+\frac{\pi^2}{12y}\nonumber\\
 		&\quad-\frac{2}{y}(\gamma+\log(y))\sum_{n=1}^{\infty}\left\{\log\left(\frac{2\pi n}{y}\right)-\frac{1}{2}\left(\psi\left(\frac{2\pi in}{y}\right)+\psi\left(-\frac{2\pi in}{y}\right)\right)\right\}\nonumber\\
 		&\quad+\frac{1}{y}\sum_{n=1}^{\infty}\left\{\psi_1\left(\frac{2\pi in}{y}\right)+\psi_1\left(-\frac{2\pi in}{y}\right)-\frac{1}{2}\left(\log^{2}\left(\frac{2\pi in}{y}\right)+\log^{2}\left(-\frac{2\pi in}{y}\right)\right)+\frac{y}{4n}\right\}.
 	\end{align}
 	Equivalently,
 	\begin{align}\label{translog1}
 		&y\sum_{n=1}^{\infty}\frac{\gamma+\log(ny) }{e^{ny}-1}-\frac{1}{4}y\log(y)+y\left(\frac{1}{4}\log(2\pi)-\frac{\gamma}{4}\right)+\frac{1}{2}\log^2 (y)-\frac{\gamma^2}{2}-\frac{\pi^2}{12}\nonumber\\
 		&=\sum_{n=1}^{\infty}\left\{\psi_1\left(\frac{2\pi in}{y}\right)+\psi_1\left(-\frac{2\pi in}{y}\right)-\frac{1}{2}\left(\log^{2}\left(\frac{2\pi in}{y}\right)+\log^{2}\left(-\frac{2\pi in}{y}\right)\right)+\frac{y}{4n}\right\}.
 	\end{align}
 \end{theorem}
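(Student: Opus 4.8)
\noindent The plan is to differentiate the transformation \eqref{maineqn} of \cite{dkk} in the parameter $a$ and let $a\to0$, as foreshadowed in the introduction. Differentiating the left side is routine: interchanging $\partial_a$ with the sum gives $\partial_a\big(\sum_n\sigma_a(n)e^{-ny}\big)\big|_{a=0}=\sum_n\frac{\log n}{e^{ny}-1}$, while the three remaining terms are elementary, and I would read off their behaviour near $a=0$ from $\csc(\pi a/2)=\frac{2}{\pi a}+\frac{\pi a}{12}+\cdots$, from the Taylor expansion of $\zeta(-a)$ (needing $\zeta'(0)=-\frac12\log 2\pi$ and $\zeta''(0)$) and from $\zeta(1-a)=-\frac1a+\gamma+\gamma_1 a+\cdots$. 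The two simple poles in $a$, from $\csc(\pi a/2)$ and from $\zeta(1-a)$, cancel, so the left side is holomorphic at $a=0$; after inserting the classical value $\zeta''(0)=\frac{\gamma^2}{2}+\gamma_1-\frac{\pi^2}{24}-\frac12\log^2(2\pi)$ one finds that the $a$-derivative of the left side of \eqref{maineqn} at $a=0$ equals
\begin{equation*}
\sum_{n=1}^\infty\frac{\log n}{e^{ny}-1}+\frac14\log(2\pi)-\frac{\log^2 y}{2y}+\frac{\gamma^2}{2y}-\frac{\pi^2}{12y},
\end{equation*}
that is, the target series minus the explicit block of \eqref{translog}.

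The substance is the derivative of the right side. First I would use the Legendre-duplication contraction $\big(\tfrac{1-a}{2}\big)_k\big(1-\tfrac a2\big)_k=4^{-k}(1-a)_{2k}$ to recognise
\begin{equation*}
\frac{(2\pi n)^{-a}}{\Gamma(1-a)}\,{}_1F_2\!\Big(1;\tfrac{1-a}{2},1-\tfrac a2;\tfrac{4\pi^4n^2}{y^2}\Big)=(2\pi n)^{-a}\,E_{2,1-a}\!\Big(\tfrac{16\pi^4n^2}{y^2}\Big),\qquad E_{2,b}(z):=\sum_{k\ge0}\frac{z^k}{\Gamma(2k+b)},
\end{equation*}
the two-variable Mittag-Leffler function, so that the $n$-th summand on the right of \eqref{maineqn}, say $B_n(a):=(2\pi n)^{-a}E_{2,1-a}\big(\tfrac{16\pi^4n^2}{y^2}\big)-\big(\tfrac{2\pi}{y}\big)^a\cosh\big(\tfrac{4\pi^2n}{y}\big)$, vanishes at $a=0$ because $E_{2,1}(z)=\cosh\sqrt z$. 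This removes the simple pole of $\frac{2\pi}{y\sin(\pi a/2)}$, so the right side too is holomorphic at $a=0$; writing it as $P(a)S(a)$ with $P(a)=\frac{2\pi}{y\sin(\pi a/2)}=\frac4{ya}+O(a)$ and $S(a)=\sum_n\sigma_a(n)B_n(a)=S'(0)a+\tfrac12 S''(0)a^2+\cdots$, a short expansion gives $\partial_a(PS)\big|_{a=0}=\frac2y S''(0)$. Thus one is reduced to the \emph{second} $a$-derivative of the series: since $B_n(0)=0$, $S''(0)=\sum_n\big(2\sigma_0'(n)B_n'(0)+d(n)B_n''(0)\big)$ with $\sigma_0'(n)=\tfrac12 d(n)\log n$, which forces the first and second $b$-derivatives of $E_{2,b}$ at $b=1$ into the picture. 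A genuine technical point here is to justify differentiating the series term by term: using the asymptotics of $E_{2,b}$, whose leading $e^{\sqrt z}$ growth is precisely cancelled by the $\cosh$, one shows that $B_n(a)$ and its first two $a$-derivatives are $O(n^{-2})$ locally uniformly for $a$ near $0$.

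The crucial new ingredients — the "new properties of $\psi_1$" and the representation of the second derivative of $E_{2,b}$ — are closed forms for $\partial_bE_{2,b}(z^2)\big|_{b=1}$ and $\partial_b^2E_{2,b}(z^2)\big|_{b=1}$, which I would derive from the Hankel-loop representation $E_{2,b}(z^2)=\tfrac12\big(\phi_b(z)+\phi_b(-z)\big)$, $\phi_b(z)=\frac{1}{2\pi i}\int_{\mathcal H}\frac{e^t\,t^{1-b}}{t-z}\,dt$: differentiating in $b$ under the integral brings down powers of $-\log t$, and collapsing $\mathcal H$ onto the cut $(-\infty,0]$ — picking up the residue at $t=-z$ in $\phi_b(-z)$, since $\mathrm{Re}(z)>0$ — leaves branch-cut integrals which, after integration by parts, are matched through the defining series \eqref{psi1z} of $\psi_1$ (the derivative of Deninger's $R$) and the analogous series for $\psi$ to expressions built from $\psi,\psi_1,\log,\log^2$. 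Carried through $S'(0)$, the first-derivative identity must reproduce Wigert's formula \eqref{kanot} — a decisive consistency check, which also pins down the normalisations forcing the arguments $\pm\tfrac{2\pi i n}{y}$; carried through $S''(0)$ it produces the $\psi_1$-sum. This branch-cut analysis, together with the care needed to keep the several individually divergent pieces honestly combined, is where I expect the main difficulty. One could alternatively bypass \eqref{maineqn} altogether and work from $\sum_n\frac{\log n}{e^{ny}-1}=-\frac{1}{2\pi i}\int_{(c)}\Gamma(w)\zeta(w)\zeta'(w)y^{-w}\,dw$ ($c>1$), applying the functional equation of $\zeta$; the same branch cut of $\log$ and the same emergence of Deninger's $R$ occur.

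Finally I would assemble the two sides. Feeding the $E_{2,b}$-derivative identities into $S''(0)=\sum_n\big(2\sigma_0'(n)B_n'(0)+d(n)B_n''(0)\big)$ and summing over $n$ (a divisor-function summation of Wigert type, via the functional equation of $\zeta$), the auxiliary constants produced along the way — involving $\zeta''(0),\gamma_1,\log^2(2\pi)$ — cancel, and $\frac2y S''(0)$ reduces to exactly $-\frac2y(\gamma+\log y)\sum_n\{\log\frac{2\pi n}{y}-\frac12(\psi(\frac{2\pi in}{y})+\psi(-\frac{2\pi in}{y}))\}+\frac1y\sum_n\{\psi_1(\frac{2\pi in}{y})+\psi_1(-\frac{2\pi in}{y})-\frac12(\log^2(\frac{2\pi in}{y})+\log^2(-\frac{2\pi in}{y}))+\frac{y}{4n}\}$. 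Equating this with the left-hand derivative found above and solving for $\sum_n\frac{\log n}{e^{ny}-1}$ gives \eqref{translog}. The equivalent form \eqref{translog1} then follows at once on multiplying by $y$, using \eqref{kanot} to replace $\frac2y\sum_n\{\log\frac{2\pi n}{y}-\frac12(\psi(\cdots)+\psi(-\cdots))\}$ by $\sum_n\frac1{e^{ny}-1}$, and collecting $\sum_n\frac{\log n}{e^{ny}-1}+(\gamma+\log y)\sum_n\frac1{e^{ny}-1}=\sum_n\frac{\gamma+\log(ny)}{e^{ny}-1}$.
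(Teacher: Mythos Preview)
Your overall strategy---differentiate \eqref{maineqn} with respect to $a$ and let $a\to0$---is exactly the paper's. Your handling of the left side matches the paper's computation of $G_1$ and $G_2$, and your observation that the right side reduces (via $B_n(0)=0$ and the pole of $\csc(\pi a/2)$) to $\frac{2}{y}S''(0)$, forcing the appearance of $\partial_b^2 E_{2,b}\big|_{b=1}$, is correct and is effectively what the paper does through its double L'H\^opital computation \eqref{comp_limit}.

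Where you diverge from the paper is in the route from $\partial_b^2 E_{2,b}\big|_{b=1}$ to $\psi_1$, and here your sketch has a structural gap. The paper does \emph{not} obtain $\psi_1$ from a single evaluation of $\partial_b^2 E_{2,b}(w^2)\big|_{b=1}$. Instead it proceeds in two distinct stages: first, Theorem~\ref{dzhrbashyan} (proved from Dzhrbashyan's explicit formula for $E_{2,b}$) gives
\[
\partial_b^2 E_{2,b}(w^2)\big|_{b=1}=\log^2(w)\cosh(w)+2\int_0^\infty\frac{u\cos(u)\log(u)}{u^2+w^2}\,du,
\]
which contains no $\psi_1$ at all; then, after expanding $\sigma_a(n)$ into a double sum over $m,n$ with $w=4\pi^2 mn/y$, it is the \emph{sum over $m$} of these cosine integrals that is evaluated in closed form by Theorem~\ref{analoguedgkm} (itself proved via the Kloosterman-type Mellin representation of $\psi_1$ in Theorem~\ref{kloosterman-type}), and only there does $\psi_1(\pm 2\pi in/y)$ emerge. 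Your proposal to collapse a Hankel loop onto the cut and ``match through the defining series \eqref{psi1z} of $\psi_1$ after integration by parts'' conflates these two stages: collapsing the loop for a single $w$ will at best reproduce something like the cosine integral above, not a $\psi_1$-value, and the genuine content---how the $m$-sum (the divisor structure you defer to ``a divisor-function summation of Wigert type'') collapses to $\psi_1$---is precisely the substantial new Theorem~\ref{analoguedgkm}, which your sketch does not address. In short, the Hankel-loop idea is a reasonable alternative to Dzhrbashyan's formula for the first stage, but the second stage (the $m$-sum $\to\psi_1$) is where the paper's real work lies, and that step is missing from your plan rather than merely abbreviated.
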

 \begin{remark}
 That the series
 \begin{equation*}
\sum_{n=1}^{\infty}\left\{\psi_1\left(\frac{2\pi in}{y}\right)+\psi_1\left(-\frac{2\pi in}{y}\right)-\frac{1}{2}\left(\log^{2}\left(\frac{2\pi in}{y}\right)+\log^{2}\left(-\frac{2\pi in}{y}\right)\right)+\frac{y}{4n}\right\} 
 \end{equation*}
 converges absolutely is clear from \eqref{asyseries} below.
 \end{remark}
The exact transformation in \eqref{translog}  is an analogue of Wigert's result \eqref{kanot}. This is evident from the fact that 
\begin{align*}
\log\left(\tfrac{2\pi n}{y}\right)-\tfrac{1}{2}\left(\psi\left(\tfrac{2\pi in}{y}\right)+\psi\left(-\tfrac{2\pi in}{y}\right)\right)=-\tfrac{1}{2}\left\{\psi\left(\tfrac{2\pi in}{y}\right)+\psi\left(-\tfrac{2\pi in}{y}\right)-\left(\log\left(\tfrac{2\pi i n}{y}\right)+\log\left(\tfrac{-2\pi i n}{y}\right)\right)\right\},
\end{align*}
which should be compared with the summand of the second series on the right-hand side of \eqref{translog}.
 
 Thus, \eqref{translog} allows us to transform $\sum_{n=1}^{\infty}d(n)\log(n)e^{-ny}$ into series having a constant times $1/y$ in the arguments of the functions in their summands. This ``modular'' behavior has an instant application: it gives the complete asymptotic expansion of $\sum_{n=1}^{\infty}d(n)\log(n)e^{-ny}$ as $y\to0$, which is given in the following result.
 
\begin{theorem}\label{logy0}
	Let $A$ denote the Glaisher-Kinkelin constant defined by \cite{glaisher1}, \cite{glaisher2}, \cite{kinkelin}, \cite[p.~461, Equation (A.7)]{voros}
	\begin{align*}
		\log(A):=\lim_{n\to\infty}\left\{\sum_{k=1}^{n}k\log(k)-\left(\frac{n^2}{2}+\frac{n}{2}+\frac{1}{12}\right)\log(n)+\frac{n^2}{4}\right\}.
	\end{align*}
	As $y\to0$ in $|\arg(y)|<\pi/2$,
		\begin{align}\label{logy0eqn}
	\sum_{n=1}^{\infty}\frac{\log(n) }{e^{ny}-1}&\sim\frac{1}{2y}\log^{2}(y) + \frac{1}{y}\left(\frac{\pi^2}{12} - \frac{\gamma^2}{2} \right)-\frac{1}{4}\log(2\pi) +\frac{y}{12}\left(\log A - \frac{1}{12} \right) \nonumber\\
&\quad +\sum_{k=2}^{\infty} \frac{B_{2k} y^{2k-1}}{k}\left\{\frac{B_{2k}}{2(2k)!}\left(\gamma - \sum_{j=1}^{2k-1}\frac{1}{j}+\log (2\pi)\right)+ \frac{(-1)^k \zeta'(2k)}{(2\pi)^{2k}}\right\}.
	\end{align}
\end{theorem}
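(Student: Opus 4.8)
The plan is to let $y\to0$ in the transformation of Theorem~\ref{loglamb}, in the form \eqref{translog1}, and to replace every special function on the right-hand side by its large-argument asymptotic expansion. Taking $a(n)=1$ in \eqref{equivalentlambert} gives $\sum_{n=1}^{\infty}(e^{ny}-1)^{-1}=\sum_{n=1}^{\infty}d(n)e^{-ny}$, so the first term on the left of \eqref{translog1} splits as
\begin{equation*}
y\sum_{n=1}^{\infty}\frac{\gamma+\log(ny)}{e^{ny}-1}=(\gamma+\log y)\,y\sum_{n=1}^{\infty}d(n)e^{-ny}+y\sum_{n=1}^{\infty}\frac{\log n}{e^{ny}-1},
\end{equation*}
and the first piece can be expanded by the asymptotic formula \eqref{wiganobef}. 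Solving \eqref{translog1} for $\sum_{n=1}^{\infty}\frac{\log n}{e^{ny}-1}$, substituting \eqref{wiganobef}, and dividing by $y$, one is led to
\begin{equation*}
\sum_{n=1}^{\infty}\frac{\log n}{e^{ny}-1}\;\sim\;\frac{1}{y}\,\Sigma(y)+\frac{1}{2y}\log^{2}y+\frac{1}{y}\left(\frac{\pi^{2}}{12}-\frac{\gamma^{2}}{2}\right)-\frac{1}{4}\log(2\pi)+(\gamma+\log y)\sum_{m=1}^{\infty}\frac{B_{2m}^{2}\,y^{2m-1}}{(2m)(2m)!},
\end{equation*}
where $\Sigma(y)$ is the series on the right-hand side of \eqref{translog1}. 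The explicit terms here already reproduce the first line of \eqref{logy0eqn}, so everything comes down to expanding $\Sigma(y)$ as $y\to0$ and combining it with the last, $(\gamma+\log y)$-weighted, Bernoulli series.

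Next I would expand $\Sigma(y)$ termwise. Writing $z=2\pi i n/y$ and feeding into each summand the large-$|z|$ asymptotic expansion of $\psi_1$ underlying \eqref{asyseries} --- whose leading term is $\tfrac12\log^{2}z$ and which, as established earlier, carries no constant term --- the $\tfrac12\log^{2}z$ pieces of $\psi_1(z)+\psi_1(-z)$ exactly cancel the explicit $-\tfrac12\bigl(\log^{2}z+\log^{2}(-z)\bigr)$, the $\pm i\pi/2$ branch contributions in $\log(\pm z)$ producing an $n$-independent $\pi^{2}/4$ that cancels along with them, while the $\log z/z$ pieces of $\psi_1(z)+\psi_1(-z)$ cancel the explicit $\tfrac{y}{4n}$. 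What survives for each $n$ is $\sum_{k\ge1}$ of terms of size $O\!\left(y^{2k}/n^{2k}\right)$, with coefficients built from $\log(2\pi n/y)$ and the harmonic numbers $\sum_{j=1}^{2k-1}\tfrac1j$ arising from repeated differentiation of $\tfrac{\log u}{u}$. Interchanging the sum over $n$ with the expansion in $y$ --- legitimate by the uniform absolute convergence recorded in the Remark after Theorem~\ref{loglamb} --- the only $n$-sums that occur are $\sum_{n\ge1}n^{-2k}=\zeta(2k)$ and $\sum_{n\ge1}(\log n)\,n^{-2k}=-\zeta'(2k)$.

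It then remains to assemble the coefficients. Using Euler's formula $\zeta(2k)=\dfrac{(-1)^{k+1}(2\pi)^{2k}B_{2k}}{2(2k)!}$ turns each $\zeta(2k)$ contribution into a $\dfrac{B_{2k}^{2}}{(2k)!}$-multiple, and adding the $(\gamma+\log y)$-weighted Bernoulli series from the first paragraph --- during which the $\log y$ generated by expanding $\log^{2}(\pm2\pi i n/y)$ cancels against the $\log y$ inside $\gamma+\log y$ --- produces exactly the combination $\gamma-\sum_{j=1}^{2k-1}\tfrac1j+\log(2\pi)$ multiplying $\dfrac{B_{2k}^{2}}{2k\,(2k)!}\,y^{2k-1}$, together with the term $\dfrac{B_{2k}}{k}\cdot\dfrac{(-1)^{k}\zeta'(2k)}{(2\pi)^{2k}}\,y^{2k-1}$ coming from the $-\zeta'(2k)$'s. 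This produces the full series $\sum_{k\ge1}$ of the same shape as the one in \eqref{logy0eqn}; isolating the $k=1$ term and rewriting $\zeta'(2)=\dfrac{\pi^{2}}{6}\bigl(\gamma+\log(2\pi)-12\log A\bigr)$ collapses it to $\dfrac{y}{12}\bigl(\log A-\tfrac1{12}\bigr)$, which accounts for the remaining coefficient in \eqref{logy0eqn} while leaving precisely the stated sum from $k=2$.

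The hard part will be the bookkeeping in the last two steps: one must carry the full expansion of each $\log^{2}(\pm2\pi i n/y)=\bigl(\log(2\pi n)-\log y\pm i\pi/2\bigr)^{2}$ --- which spawns $\log^{2}y$, $\log y\,\log(2\pi n)$, $\log^{2}(2\pi n)$, a $\pm i\pi$ cross term and an $n$-independent $-\pi^{2}/4$ --- in step with the matching pieces from the asymptotic expansion of $\psi_1$ and from the $(\gamma+\log y)$-weighted Bernoulli series, and verify that every $\log y$ contribution beyond the isolated $\tfrac1{2y}\log^{2}y$, together with every potentially divergent $n$-independent contribution, cancels, so that the surviving coefficients are $\log y$-free and the resulting series converges. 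The accompanying analytic point --- justifying the passage to the termwise asymptotic expansion uniformly in the sector $|\arg y|<\pi/2$, with a controlled remainder after $N$ terms --- proceeds exactly as in the derivation of \eqref{wigano} from \eqref{kanot} and is routine once \eqref{asyseries} is available.
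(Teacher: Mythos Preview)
Your approach is correct and essentially the same as the paper's: the paper works from \eqref{translog} and expands the $\psi$-series directly (obtaining \eqref{First sum}), whereas you start from the equivalent form \eqref{translog1} and substitute \eqref{wiganobef} --- but since \eqref{translog1} is obtained from \eqref{translog} via \eqref{kanot}, and \eqref{wiganobef} is the asymptotic consequence of \eqref{kanot}, the two routes amount to the same computation. The substantive step --- feeding Theorem~\ref{asymptotic-psi1} into $\Sigma(y)$, summing over $n$ to produce $\zeta(2k)$ and $-\zeta'(2k)$, and then using Euler's formula and the evaluation of $\zeta'(2)$ in terms of $\log A$ --- is identical to the paper's \eqref{Second sum} and the assembly that follows; your remark about needing to track the full $\log^{2}(\pm 2\pi i n/y)$ expansion is unnecessary, since those terms cancel \emph{exactly} against the leading $\tfrac12\log^2 z$ of $\psi_1(z)$ before any branch bookkeeping is required.
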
 
 As seen earlier, Wigert's result \eqref{wigano} is useful in getting the asymptotic estimate for the smoothly weighted second moment of the zeta function on the critical line given in \eqref{sw2nd}. It is now natural to ask whether our result in Theorem \ref{logy0} has an application in the theory of moments. Indeed, \eqref{logy0eqn} implies the following result.
 \begin{theorem}\label{moments}
 As $\delta\to0, |\arg(\delta)|<\pi/2$,
 \begin{align}\label{mvtder}
 \int_{0}^{\infty}\zeta\left(\frac{1}{2}-it\right)\zeta'\left(\frac{1}{2}+it\right)e^{-\delta t}\, dt=\frac{-\log^{2}(2\pi\delta)+\gamma^2-\frac{\pi^2}{6}}{4\sin\left(\frac{\delta}{2}\right)}+\sum_{k=0}^{2m-2}d_k\delta^k+O\left(\delta^{2m-1}\right),
 \end{align}
 where the $d_k$ are effectively computable constants and the constant implied by the big-$O$ depends on $m$. 
 \end{theorem}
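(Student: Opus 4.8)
The plan is to obtain \eqref{mvtder} from the asymptotic expansion \eqref{logy0eqn} by the classical device (going back to Atkinson \cite{atkinson} and Titchmarsh \cite[Ch.~7]{titch}) of turning a smoothly weighted moment into a Lambert-type series via the functional equation of $\zeta$; in the same way that \eqref{sw2nd} is the moment-theoretic face of Wigert's expansion \eqref{wigano}, the identity \eqref{mvtder} will be the moment-theoretic face of Theorem \ref{logy0}. Write $I(\delta):=\int_{0}^{\infty}\zeta\left(\tfrac12-it\right)\zeta'\left(\tfrac12+it\right)e^{-\delta t}\,dt$. Since $\zeta(s)^{2}=\sum_{n\ge1}d(n)n^{-s}$ and $\zeta(s)\zeta'(s)=\tfrac12\frac{d}{ds}\zeta(s)^{2}$, the Mellin transform of the Lambert series of logarithm is
\begin{equation*}
\int_{0}^{\infty}\left(\sum_{n=1}^{\infty}\frac{\log n}{e^{nx}-1}\right)x^{s-1}\,dx=-\Gamma(s)\zeta(s)\zeta'(s)\qquad\bigl(\operatorname{Re}(s)>1\bigr),
\end{equation*}
so that $\sum_{n\ge1}\frac{\log n}{e^{nx}-1}=\frac{1}{2\pi i}\int_{(c)}\bigl(-\Gamma(s)\zeta(s)\zeta'(s)\bigr)x^{-s}\,ds$ for $c>1$, and pushing this contour to the left past the triple pole at $s=1$ and the poles of $\Gamma$ at $s=0,-1,-2,\dots$ reproduces \eqref{logy0eqn}. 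The task is to produce the companion Mellin--Barnes representation of $I(\delta)$ and push its contour in the same way.

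For this I would apply the functional equation $\zeta(s)=\chi(s)\zeta(1-s)$, with $\chi(s)=(2\pi)^{s}/\bigl(2\Gamma(s)\cos(\pi s/2)\bigr)$, to $\zeta\left(\tfrac12+it\right)=\chi\left(\tfrac12+it\right)\zeta\left(\tfrac12-it\right)$ and differentiate, obtaining
\begin{equation*}
\zeta\!\left(\tfrac12-it\right)\zeta'\!\left(\tfrac12+it\right)=\chi'\!\left(\tfrac12+it\right)\zeta\!\left(\tfrac12-it\right)^{2}-\chi\!\left(\tfrac12+it\right)\zeta\!\left(\tfrac12-it\right)\zeta'\!\left(\tfrac12-it\right).
\end{equation*}
Moving the line of integration so as to reach the half-plane where $\sum d(n)n^{-1/2+it}$ and $\sum d(n)\log(n)\,n^{-1/2+it}$ converge (legitimate by the convexity bounds for $\zeta$ and $\zeta'$ on vertical lines together with the decay of $e^{-\delta t}$), inserting these Dirichlet series, integrating term by term by $\int_{0}^{\infty}t^{w-1}e^{it\log n}e^{-\delta t}\,dt=\Gamma(w)(\delta-i\log n)^{-w}$, and expanding $\chi\left(\tfrac12+it\right)$ and $\chi'\left(\tfrac12+it\right)$ by Stirling's formula, one should arrive at a representation
\begin{equation*}
I(\delta)=\frac{1}{2\pi i}\int_{(c)}\Gamma(s)\zeta(s)\zeta'(s)\,K_{\delta}(s)\,ds+\Sigma(\delta),
\end{equation*}
where $K_{\delta}(s)$ is an elementary function of $s$ assembled from the factors $(2\pi)^{\pm s}$, $\cos(\pi s/2)$ and the Stirling expansion of $\Gamma(\tfrac12+it)/\Gamma(\tfrac12-it)$, and $\Sigma(\delta)$ is a combination of Lambert series $\sum_{n}d(n)\log(n)e^{-nc/\delta}$ and $\sum_{n}d(n)e^{-nc/\delta}$ in the reciprocal variable ($c>0$ a constant); the factor $(2\pi)^{\pm s}$ is what turns $\log\delta$ into $\log(2\pi\delta)$, and the $\cos(\pi s/2)$ is what produces the $\sin(\delta/2)$ in the denominators. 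This reciprocity of the argument is precisely the phenomenon made explicit in Theorem \ref{loglamb}, and the same mechanism underlies the passage from \eqref{maineqn}.

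Moving this contour to the left then finishes the proof. Crossing the pole at $s=1$, which has order three (from $\zeta(s)^{2}$ and its derivative): with $\zeta(s)\zeta'(s)=-(s-1)^{-3}-\gamma(s-1)^{-2}+O(1)$ and $\Gamma(1)=1$, $\Gamma'(1)=-\gamma$, $\Gamma''(1)=\gamma^{2}+\pi^{2}/6$, the residue is exactly the main term $\bigl(-\log^{2}(2\pi\delta)+\gamma^{2}-\tfrac{\pi^{2}}{6}\bigr)/\bigl(4\sin(\delta/2)\bigr)$ of \eqref{mvtder} --- it is the very residue that delivers the $1/y$-terms $\tfrac1{2y}\log^{2}y+\tfrac1y\bigl(\tfrac{\pi^{2}}{12}-\tfrac{\gamma^{2}}{2}\bigr)$ of \eqref{logy0eqn}, transported by the correspondence $1/y\leftrightarrow 1/\bigl(2\sin(\delta/2)\bigr)$, $\log y\leftrightarrow\log(2\pi\delta)$ and an overall change of sign. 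Crossing the simple poles of $\Gamma(s)$ at $s=0,-1,-2,\dots$, where the $\delta$-dependent factor $K_{\delta}$ contributes a power of $\delta$: the residues involve $\zeta(0)\zeta'(0)=\tfrac14\log(2\pi)$, $\zeta(-1)\zeta'(-1)=\tfrac1{12}\bigl(\log A-\tfrac1{12}\bigr)$, and in general $\zeta(-k)\zeta'(-k)$ rewritten through $\zeta'(2k)$ by the functional equation --- exactly the constants occurring in Theorem \ref{logy0} --- which, together with the Taylor expansion of $K_{\delta}$ (in particular the Laurent/Taylor expansion of $1/\sin(\delta/2)$, which mixes in the even powers of $\delta$ absent from \eqref{logy0eqn}), assemble into $\sum_{k=0}^{2m-2}d_{k}\delta^{k}$ with effectively computable $d_{k}$. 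The series $\Sigma(\delta)$ is $O\!\left(e^{-c'/\delta}\right)$ and disappears into the error term, while the integral along $\operatorname{Re}(s)=-(2m-1)+\varepsilon$ is $O(\delta^{2m-1})$; this gives \eqref{mvtder}. (Incidentally, since $\tfrac{d}{dt}\bigl|\zeta(\tfrac12+it)\bigr|^{2}=-2\operatorname{Im}\bigl[\zeta(\tfrac12-it)\zeta'(\tfrac12+it)\bigr]$, an integration by parts expresses $\operatorname{Im}I(\delta)$ directly through the classical weighted second moment \eqref{sw2nd}.)

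The step I expect to be the main obstacle is the analytic justification and the precision of the bookkeeping in the construction of the representation: one has to justify the term-by-term integration of the merely conditionally convergent Dirichlet series that appear after the functional-equation substitution, control the Stirling expansions of $\chi$ and $\chi'$ uniformly in $t$, and --- the delicate point --- retain enough lower-order terms that the exact numerator $-\log^{2}(2\pi\delta)+\gamma^{2}-\pi^{2}/6$ and the exact elementary factor $1/\bigl(4\sin(\delta/2)\bigr)$ (rather than only its leading behaviour $1/(2\delta)$) are produced. This hinges on the second-order terms in Stirling's formula for $\Gamma(\tfrac12+it)/\Gamma(\tfrac12-it)$ and on a careful treatment of the off-diagonal contributions in the resulting double sum over $n$; once these are in place, Theorem \ref{moments} follows by reading off the residues, term for term, against \eqref{logy0eqn}.
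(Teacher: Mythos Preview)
Your proposal has the right target --- relate $I(\delta)$ to the Lambert series of logarithm via the functional equation and then read off Theorem~\ref{logy0} --- but the execution you sketch is both vaguer and more complicated than what is actually needed, and the central connecting step is not carried out. You propose to start from $I(\delta)$, differentiate the functional equation to produce $\chi'(\tfrac12+it)\zeta(\tfrac12-it)^{2}-\chi(\tfrac12+it)\zeta(\tfrac12-it)\zeta'(\tfrac12-it)$, expand both factors in Dirichlet series, integrate term by term against Stirling expansions of $\chi,\chi'$, and hope that an unspecified kernel $K_{\delta}(s)$ and an unspecified reciprocal series $\Sigma(\delta)$ emerge. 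None of this is written down, and you yourself flag the bookkeeping as the ``main obstacle.'' In particular, the appearance of the exact denominator $4\sin(\delta/2)$ and of $\log(2\pi\delta)$ is not something Stirling's formula for $\chi$ will hand you directly; those elementary factors come from a specific choice of complex argument in the Lambert series, not from asymptotics.

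The paper's route avoids all of this. One does \emph{not} start from $I(\delta)$; one starts from the Mellin--Barnes representation $\sum_{n}d(n)\log(n)e^{-ny}=-\tfrac{2}{2\pi i}\int_{(c)}\Gamma(s)\zeta(s)\zeta'(s)y^{-s}\,ds$ and makes the single substitution $y=2\pi i e^{-i\delta}$. Shifting the line to $\operatorname{Re}(s)=\tfrac12$ picks up the residue at the triple pole $s=1$, and one application of the functional equation (no derivative of $\chi$ is needed) turns the remaining integral into $\tfrac{e^{-i\delta/2}}{2i}\int_{(1/2)}\tfrac{\zeta(1-s)\zeta'(s)}{\cos(\pi s/2)}e^{-is(\pi/2-\delta)}\,ds$. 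The point is then a direct, elementary comparison: one writes down the difference between $I(\delta)$ and this line integral, parametrises $s=\tfrac12+it$, and splits into two pieces each of which is visibly analytic for $|\delta|<\pi$. No Dirichlet-series expansion, no off-diagonal sums, and no Stirling expansion are required. Finally, since $e^{-2\pi i n e^{-i\delta}}=e^{-n\cdot 2\pi i(e^{-i\delta}-1)}$, the Lambert series is precisely $\sum_{n}d(n)\log(n)e^{-ny}$ with $y=2\pi i(e^{-i\delta}-1)$, and one simply plugs this $y$ into Theorem~\ref{logy0}; the factor $2\pi i(e^{-i\delta}-1)=4\pi e^{-i\delta/2}\sin(\delta/2)$ is what produces the $1/\bigl(4\sin(\delta/2)\bigr)$, and $\log y$ becomes $\log(2\pi\delta)$ up to analytic corrections. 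The polynomial $\sum d_{k}\delta^{k}$ is then just the Taylor expansion in $\delta$ of everything left over, not a collection of residues at negative integers in a second contour integral.

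So: the missing idea in your proposal is the substitution $y=2\pi i e^{-i\delta}$ (and the observation that the resulting critical-line integral differs from $I(\delta)$ by a function analytic near $\delta=0$). Once you have that, Theorem~\ref{moments} is a corollary of Theorem~\ref{logy0} with almost no further analysis; your $\chi'$-route, even if it can be made to work, is a detour.
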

In fact, one can obtain the complete asymptotic expansion of the left-hand side of \eqref{mvtder} using \eqref{logy0eqn}.

Mean value theorems involving the derivatives of the Riemann zeta function have been well-studied. For example, Ingham \cite{ingham} (see also Gonek \cite[Equation (2)]{gonek}\footnote{There is a slight typo in the asymptotic formula on the right-hand side of this equation in that $(-1)^{\mu+\nu}$ is missing.}) showed that
\begin{equation*}
\int_{0}^{T}\zeta^{(\mu)}\left(\frac{1}{2}+it\right)\zeta^{(\nu)}\left(\frac{1}{2}-it\right)\, dt\sim\frac{(-1)^{\mu+\nu}\hspace{1mm}T}{\mu+\nu+1}\log^{\mu+\nu+1}(T)
\end{equation*}
as $T\to\infty$, where $\mu, \nu\in\mathbb{N}\cup\{0\}$. See also \cite[p.~102]{butterbaugh}. In particular, for $\mu=1$ and $\nu=0$, we have
\begin{align*}
\int_{0}^{T}\zeta\left(\frac{1}{2}-it\right)\zeta'\left(\frac{1}{2}+it\right)\, dt\sim \frac{-T}{2}\log^{2}(T)
\end{align*}
as $T\to\infty$.

The proof of Theorem \ref{loglamb} is quite involved in the sense that we had to establish several new results in the course of proving it. These results are important in themselves and may have applications in other areas. Hence this paper is organized as follows. 

We collect frequently used results in the next section. In \S 3, we first prove Theorem \ref{asymptotic-psi1} which gives an asymptotic expansion of $\psi_1(z)$ followed by Theorem \ref{kloosterman-type}, a Kloosterman-type result for $\psi_1(z)$. Theorem \ref{analoguedgkm} is the highlight of this section and is an analogue of \eqref{dgkmresult} established in \cite[Theorem 2.2]{DGKM}. \S 4 is devoted to obtaining a new representation for the second derivative of the two-variable Mittag-Leffler function $E_{2, b}(z)$ at $b=1$. In \S 5, we prove our main results, that is, Theorems \ref{loglamb}, \ref{logy0} and \ref{moments}. Finally, we conclude the paper with some remarks and directions for future research.

\section{Preliminaries}
Stirling's formula in a vertical strip $\alpha\leq\sigma\leq\beta$, $s=\sigma+it$ states that \cite[p.~224]{cop}
\begin{equation}\label{strivert}
|\Gamma(s)|=(2\pi)^{\tfrac{1}{2}}|t|^{\sigma-\tfrac{1}{2}}e^{-\tfrac{1}{2}\pi |t|}\left(1+O\left(\frac{1}{|t|}\right)\right)
\end{equation}
uniformly as $|t|\to\infty$.

We will also need the following result established in \cite[Theorem 2.2]{DGKM} which is valid for $\textup{Re}(w)>0$:
\begin{align}\label{dgkmresult}
	\sum_{n=1}^\infty\int_0^\infty\frac{t\cos(t)}{t^2+n^2w^2}\ dt=\frac{1}{2}\left\{\log\left(\frac{w}{2\pi}\right)-\frac{1}{2}\left(\psi\left(\frac{iw}{2\pi}\right)+\psi\left(-\frac{iw}{2\pi}\right)\right)\right\}.
\end{align}
Watson's lemma is a very useful result in the asymptotic theory of Laplace integrals $\displaystyle\int_{0}^{\infty}e^{-zt}f(t)\, dt$. This result typically holds for $|\arg(z)|<\pi/2$. However, with additional restrictions on $f$, Watson's lemma is known to hold for extended sectors. For the sake of completeness, we include it here in the form given in \cite[p.~14, Theorem 2.2]{temme2015}.
\begin{theorem}\label{watsextended}
Let $f$ be analytic inside a sector $D: \alpha<\arg(t)<\beta$, where $\alpha<0$ and $\beta>0$. For each $\delta\in\left(0, \frac{1}{2}\beta-\frac{1}{2}\alpha\right)$, as $t\to0$ in the sector $D_{\delta}: \alpha+\delta<\arg(t)<\beta-\delta$, we have
\begin{align*}
f(t)\sim t^{\lambda-1}\sum_{n=0}^{\infty}a_nt^n,
\end{align*}
where $\textup{Re}(\lambda)>0$. Suppose there exists a real number $\sigma$ such that $f(t)=O\left(e^{\sigma|t|}\right)$ as $t\to\infty$ in $D_{\delta}$. Then the integral 
\begin{equation}\label{flt}
F_{\lambda}(t):=\int_{0}^{\infty}e^{-zt}f(t)\, dt
\end{equation}
or its analytic continuation, has the asymptotic expansion
\begin{align*}
F_{\lambda}(z)\sim\sum_{n=0}^{\infty}a_n\frac{\G(n+\lambda)}{z^{n+\lambda}}
\end{align*}
as $z\to\infty$ in the sector
\begin{equation*}
-\beta-\frac{\pi}{2}+\delta<\arg(z)<-\alpha+\frac{\pi}{2}-\delta.
\end{equation*}
The many-valued functions $t^{\lambda-1}$ and $z^{n+\lambda}$ have their principal values on the positive real axis and are defined by continuity elsewhere.
\end{theorem}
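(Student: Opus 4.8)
The plan is to reduce the extended statement to the classical Watson's lemma --- the version in which the path of integration is the positive real axis and $|\arg(z)|<\pi/2$ --- by rotating the path of integration, exploiting the analyticity of $f$ in the sector $D$ and the exponential bound $f(t)=O(e^{\sigma|t|})$ on $D_\delta$. So I would first record, and prove in the usual way, the classical case ($\theta=0$): if $g$ is locally integrable on $[0,\infty)$, $g(u)\sim u^{\lambda-1}\sum_{n\geq0}b_nu^n$ as $u\to0^+$ with $\re(\lambda)>0$, and $g(u)=O(e^{\sigma u})$ as $u\to\infty$, then
\begin{equation*}
\int_0^\infty e^{-Zu}g(u)\,du\sim\sum_{n\geq0}\frac{b_n\,\Gamma(n+\lambda)}{Z^{n+\lambda}}\qquad\bigl(Z\to\infty,\ |\arg Z|\leq\tfrac{\pi}{2}-\eta\bigr).
\end{equation*}
For this one writes $g(u)=\sum_{n=0}^{N-1}b_nu^{n+\lambda-1}+R_N(u)$ with $R_N(u)=O(u^{N+\re(\lambda)-1})$ on $(0,\varepsilon)$ and $R_N(u)=O(e^{\sigma u})$ on $(\varepsilon,\infty)$; the main terms integrate via $\int_0^\infty e^{-Zu}u^{n+\lambda-1}\,du=\Gamma(n+\lambda)/Z^{n+\lambda}$ (valid first for $\re(Z)>0$, then by analytic continuation with powers defined by continuity), while the remainder is $O\bigl(|Z|^{-(N+\re(\lambda))}\bigr)$ uniformly in the sector after splitting $\int_0^\varepsilon+\int_\varepsilon^\infty$ and using $|e^{-Zu}|=e^{-u\re(Z)}\leq e^{-u|Z|\sin\eta}$.

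Next, for each $\theta$ with $\alpha+\delta<\theta<\beta-\delta$ I would set $F_\lambda^{(\theta)}(z):=e^{i\theta}\int_0^\infty e^{-ze^{i\theta}u}f(ue^{i\theta})\,du$; since the ray $\arg t=\theta$ lies in $D_\delta$, the hypotheses give $f(ue^{i\theta})=O(e^{\sigma u})$, so $F_\lambda^{(\theta)}$ is holomorphic on the half-plane $\re(ze^{i\theta})>\sigma$. For two such angles $\theta,\theta'$, Cauchy's theorem applied to the circular sector $\theta\leq\arg t\leq\theta'$ with inner radius $r\to0$ and outer radius $\rho\to\infty$ shows $F_\lambda^{(\theta)}=F_\lambda^{(\theta')}$ on the overlap of the corresponding half-planes: the small arc contributes $o(1)$ because $f(t)=O(|t|^{\re(\lambda)-1})$ with $\re(\lambda)>0$, and the large arc contributes $o(1)$ because $|f(t)|=O(e^{\sigma\rho})$ is dominated by $|e^{-zt}|=e^{-\rho\,\re(ze^{i\arg t})}$ as soon as $\re(ze^{i\psi})>\sigma$ for every $\psi\in[\theta,\theta']$. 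Hence the $F_\lambda^{(\theta)}$ patch to a single function --- the analytic continuation of $F_\lambda$ in \eqref{flt} --- defined on $\bigcup_{\theta}\{\re(ze^{i\theta})>\sigma\}$, a region that, for $|z|$ large, contains every ray $\arg z=\phi$ with $-\beta-\tfrac{\pi}{2}+\delta<\phi<-\alpha+\tfrac{\pi}{2}-\delta$.

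To conclude, given such a $\phi$ I would pick $\theta\in(\alpha+\delta,\beta-\delta)$ with $|\phi+\theta|\leq\tfrac{\pi}{2}-\eta$ for a fixed $\eta>0$ --- possible, uniformly, on every closed subsector of the target range, precisely because $-\beta-\tfrac{\pi}{2}+\delta<\phi<-\alpha+\tfrac{\pi}{2}-\delta$. With $Z:=ze^{i\theta}$ and $g(u):=e^{i\theta}f(ue^{i\theta})$, the hypotheses give $g(u)\sim\sum_{n\geq0}a_ne^{i\theta(n+\lambda)}u^{n+\lambda-1}$ as $u\to0^+$ and $g(u)=O(e^{\sigma u})$, so the classical case applied with Watson coefficients $b_n=a_ne^{i\theta(n+\lambda)}$ gives
\begin{equation*}
F_\lambda(z)=F_\lambda^{(\theta)}(z)\sim\sum_{n\geq0}\frac{a_ne^{i\theta(n+\lambda)}\,\Gamma(n+\lambda)}{(ze^{i\theta})^{n+\lambda}}=\sum_{n\geq0}\frac{a_n\,\Gamma(n+\lambda)}{z^{n+\lambda}},
\end{equation*}
the last equality after unwinding the branch factors $e^{i\theta(n+\lambda)}$ accumulated along the rotation (the powers being linked by continuity from $\theta=0$). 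Since the right-hand side does not depend on $\theta$, this is the asserted expansion, valid uniformly on each closed subsector of $-\beta-\tfrac{\pi}{2}+\delta<\arg z<-\alpha+\tfrac{\pi}{2}-\delta$.

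I expect the main obstacle to be the rotation step --- verifying that the large circular arc genuinely contributes nothing when passing between $F_\lambda^{(\theta)}$ and $F_\lambda^{(\theta')}$. This demands $\re(ze^{i\psi})>0$ for all $\psi$ between $\theta$ and $\theta'$, so that $e^{-\rho\,\re(ze^{i\psi})}$ overcomes the growth $e^{\sigma\rho}$; combined with the requirement $\theta\in(\alpha+\delta,\beta-\delta)$ that keeps the rotated ray inside $D_\delta$, this is exactly what produces the sector $-\beta-\tfrac{\pi}{2}+\delta<\arg z<-\alpha+\tfrac{\pi}{2}-\delta$. Once that is in place, tracking the branches of the many-valued powers $t^{\lambda-1}$ and $z^{n+\lambda}$ under the rotation, and the uniformity as $\delta\downarrow0$, are routine, as is the remaining estimation from the classical case.
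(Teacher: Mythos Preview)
The paper does not supply its own proof of this theorem: it is quoted verbatim from Temme's monograph, and the footnote in the proof of Theorem~\ref{asymptotic-psi1} notes that Temme in turn refers to Olver for the argument. Your proposal---reduce to the classical Watson lemma by rotating the ray of integration through an angle $\theta\in(\alpha+\delta,\beta-\delta)$, justify the rotation via Cauchy's theorem using the exponential bound on $f$ in $D_\delta$, and then read off the expansion in the variable $Z=ze^{i\theta}$---is exactly the standard proof found in Olver, so there is nothing to compare against within the paper itself. The argument as you have sketched it is correct; the one point worth stating explicitly is that on a \emph{closed} subsector of $-\beta-\tfrac{\pi}{2}+\delta<\arg z<-\alpha+\tfrac{\pi}{2}-\delta$ a single $\theta$ (or a finite set of $\theta$'s) suffices, which is what gives the uniformity.
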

Actually we will be using an analogue of the above theorem where the integrand in \eqref{flt} has a logarithmic factor.

We will be frequently using Parseval's theorem given next. Let $\mathfrak{F}(s)=M[f;s]$ and $\mathfrak{G}(s)=M[g;s]$ denote the Mellin transforms of functions $f(x)$ and $g(x)$ respectively and let $c=\textup{Re}(s)$. If $M[f;1-c-it]\in L(-\infty, \infty)$ and $x^{c-1}g(x)\in L[0,\infty)$, then Parseval's formula \cite[p.~83]{kp} is given by
\begin{equation}
	\int_{0}^{\infty} f(x) g(x) dx = \frac{1}{2 \pi i } 
	\int_{(c)}\mathfrak{F}(1-s)\mathfrak{G}(s)\, ds,
	\label{parseval-1}
\end{equation}
where the vertical line Re$(s) = c$ lies in the common strip of 
analyticity of the Mellin transforms $\mathfrak{F}(1-s)$ and $\mathfrak{G}(s)$, and, here and throughout the sequel, we employ the notation $\int_{(c)}$ to denote the line integral $\int_{c-i\infty}^{c+i\infty}$.

\section{New results on $\psi_1(z)$}\label{derivative-deninger}

In \cite{dilcher}, Dilcher studied in detail the generalized gamma function $\G_k(z)$ which relates to the Stieltjes constant $\gamma_k, k\geq0$, defined by\footnote{Note that Deninger's definition of $\gamma_1$ in \cite[p.~174]{deninger} involves an extra factor of $2$ which is not present in conventional definition of $\g_1$, that is, in the $k=1$ case of \eqref{sc}.}
\begin{equation}\label{sc}
	\gamma_k:=\lim_{n\to\infty}\left(\sum_{j=1}^{n}\frac{\log^{k}(j)}{j}-\frac{\log^{k+1}(n)}{k+1}\right),
\end{equation}
in a similar way as the Euler gamma function $\G(z)$ relates to the Euler constant $\g=\gamma_0$. Using \cite[Equation (2.1)]{dilcher} and \cite[Equation (2.3.1)]{deninger}, we see that Dilcher's $\Gamma_1(z)$ is related to Deninger's $R(z)$ by\footnote{By analytic continuation, Equation (2.3.1) from \cite{deninger} is valid for $z\in\mathbb{D}$.}
\begin{equation}\label{dilden}
	\log(\Gamma_1(z))=\frac{1}{2}(R(z)+\zeta''(0)).
\end{equation}
As mentioned by Deninger in \cite[Remark (2.4)]{deninger}, contrary to Euler's $\G$, the function $\exp(R(x))$, or equivalently $\G_1(x)$, where $x>0,$ cannot be meromorphically continued to the whole complex plane. But $\G_1(z)$ is analytic in $z\in\mathbb{D}$.
It is this $\exp(R(x))$ that Languasco and Righi \cite{languasco} call as the \emph{Ramanujan-Deninger gamma function}. They have also given a fast algorithm to compute it.

Dilcher also defined the generalized digamma function $\psi_k(z)$ as the logarithmic derivative of $\Gamma_k(z)$. His Proposition 10 from \cite{dilcher} implies that for $z\in\mathbb{D}$,
\begin{equation}\label{psikz}
	\psi_k(z)=-\gamma_k-\frac{\log^{k}(z)}{z}-\sum_{n=1}^{\infty}\left(\frac{\log^{k}(n+z)}{n+z}-\frac{\log^{k}(n)}{n}\right),
\end{equation}
where $k\in\mathbb{N}\cup\{0\}$. Its special case $k=1$ has already been given in \eqref{psi1z}. The function $\psi_k(z)$ occurs in Entry 22 of Chapter 8 in Ramanujan's second notebook, see \cite{RN_1}. It is also used by Ishibashi \cite{ishibashi} to construct the $k^{\textup{th}}$ order Herglotz function which, in turn, plays an important role in his evaluation of the Laurent series coefficients of a zeta function associated to an indefinite quadratic form. As noted by Ishibashi and Kanemitsu \cite[p.~78]{ishikan},
\begin{equation*}
	\psi_k(z)=\frac{1}{k+1}R_{k+1}'(z),
\end{equation*}
where $R_k(z)$ is defined by
\begin{equation}\label{rkz}
	R_k(z)=(-1)^{k+1}\frac{\partial^{k}}{\partial s^k}\zeta(0, z),
\end{equation}
and 
$R_2(z)=R(z)$ of Deninger. Thus,
\begin{equation}\label{psi1r}
	\psi_1(z)=\frac{1}{2}R'(z),
\end{equation} 
which is also implied by \eqref{dilden}.
The function $\psi_k(z)$ is related to the Laurent series coefficients $\gamma_k(z)$ of the Hurwitz zeta function $\zeta(s, z)$. To see this, from \cite[Theorem 1]{berndthurwitzzeta}, note that if 
\begin{equation}\label{hurwitzlaurent}
\zeta(s, z)=\frac{1}{s-1}+\sum_{k=0}^{\infty}\frac{(-1)^k\gamma_k(z)}{k!}(s-1)^k,
\end{equation}
then\footnote{It is to be noted that Berndt includes the factor $\frac{(-1)^{k}}{k!}$ in the definition of $\gamma_k(z)$ and does not have it in the summand of \eqref{hurwitzlaurent}.}
\begin{equation}\label{scz}
	\gamma_k(z)=\lim_{n\to\infty}\left(\sum_{j=0}^{n}\frac{\log^{k}(j+z)}{j+z}-\frac{\log^{k+1}(n+z)}{k+1}\right)
	\end{equation}
	so that $\gamma_k(1)=\gamma_k$. 
Then from \eqref{sc}, \eqref{psikz}, \eqref{scz} and the fact \cite[Lemma 1]{dilcher} that
\begin{equation*}
	\lim_{n\to\infty}\left(\log^{k+1}(n+z)-\log^{k+1}(n)\right)=0 \hspace{6mm} (z\in\mathbb{D}),
\end{equation*}
	it is not difficult to see that
\begin{equation*}
\psi_k(z)=-\gamma_k(z),
\end{equation*}
which was also shown by Shirasaka \cite[p.~136]{shirasaka}.
Further properties and applications of $\psi_k(z)$ are derived in \cite{dilcher1}. 

We thus see that the literature on $R(z)$, that is, $R_2(z)$, and, in general, on $R_k(z)$, is growing fast. In the words of Ishibashi \cite[p.~61]{ishibashi}, \emph{``Deninger proved several analytic properties of $R_2(x)$  in order to familiarize and assimilate it as one of the most commonly used number-theoretic special functions,\dots'.} Languasco and Righi \cite{languasco} have also given a fast algorithm to compute $\psi_1(x), x>0$.

Our first result of this section gives the asymptotic expansion of $\psi_1(z)$ for $z\in\mathbb{D}$. To accomplish it, we require a generalization of Watson's lemma which allows for a logarithmic factor in the integrand. Such an expansion seems to have been first obtained by Jones \cite[p.~439]{jones} (see also \cite[Equations (4.14), (4.15)]{wong-wyman}). Though we will be using the same expansion, it is useful  to rigorously derive it as a special case of a more general result due to Wong and Wyman \cite[Theorem 4.1]{wong-wyman} given in the following theorem.
We note in passing that Riekstins \cite{riekstins} has also obtained asymptotic expansions of integrals involving logarithmic factors.
 \begin{theorem}\label{Watsongen}
	For $\gamma\in\mathbb{R}$, define a function 
	\begin{align*}
		F(z):=\int_{0}^{\infty e^{i\g}}f(t)e^{-zt}dt. 
	\end{align*}
	Assume that $F(z)$ exists for some $z=z_0$. If 
	\begin{enumerate}
		\item for each integer $N\in \mathbb{Z}$ 
		\begin{align*}
			f(t)=\sum_{n=0}^{N}a_nt^{\l_n-1}P_n(\log t)+o(t^{\l_N-1}(\log t)^{m(N)}),
		\end{align*}
		as $t\to 0$ along $\arg(t)=\g$.
		\item $P_n(\omega)$ is a polynomial of degree $m=m(n)$.
		\item $\{\l_n\}$ is a sequence of complex numbers, with $\Re (\l_{n+1})>\Re( \l_{n}), \Re (\l_{0})>0,$ for all $n$ such that $n$ and $n+1$ are in $\mathbb{Z}$.
		\item $\{a_n\}$ is a sequence of complex numbers. 
	\end{enumerate}
	Then as $z\to \infty $ in $S(\Delta)$
	\begin{align*}
		F(z)\sim \sum_{n\in\mathbb{Z}} a_nP_n(D_n)[\Gamma(\l_n)z^{-\l_n}]+o\left(z^{-\l_n}(\log z)^{m(n)}\right),
	\end{align*}
	where $S(\Delta): |\arg (ze^{i\g})|\leqq{\pi\over 2}-\Delta$, and $D_n$ is the operator $D_n:={d\over d\l_n}$. This result is uniform in the approach of $z\to \infty$ in $S(\Delta)$.
\end{theorem}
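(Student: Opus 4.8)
\medskip
\noindent\textbf{Proof proposal.} The plan is to run the classical Watson's-lemma argument with two adjustments. The logarithmic factors $P_n(\log t)$ will be produced by differentiating the elementary model integral $\int_0^{\infty e^{i\gamma}}t^{\lambda-1}e^{-zt}\,dt=\Gamma(\lambda)z^{-\lambda}$ with respect to $\lambda$, and every estimate will be carried out uniformly in $\arg z$ so as to give the asserted uniformity on $S(\Delta)$. The one geometric fact I would use repeatedly is that for $z\in S(\Delta)$ one has $\re(ze^{i\gamma})=|z|\cos(\arg(ze^{i\gamma}))\ge|z|\sin\Delta$, which is what converts control of the size of $\re(ze^{i\gamma})$ into control of the size of $|z|$, with constants depending only on $\Delta$.

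First I would fix $\delta>0$ small enough that the expansion in hypothesis (1) is valid on $(0,\delta e^{i\gamma}]$, and split $F(z)=\int_0^{\delta e^{i\gamma}}f(t)e^{-zt}\,dt+\int_{\delta e^{i\gamma}}^{\infty e^{i\gamma}}f(t)e^{-zt}\,dt$. For the tail I would parametrize by $t=ue^{i\gamma}$ and integrate by parts, using that, since $F(z_0)$ exists, the function $\Phi(u):=\int_u^{\infty e^{i\gamma}}f(s)e^{-z_0 s}\,ds$ is continuous and bounded on $[\delta,\infty)$ and tends to $0$ as $u\to\infty$; this yields a bound $O\bigl((1+|z|)e^{-\delta\,\re((z-z_0)e^{i\gamma})}\bigr)$ for the tail, which on $S(\Delta)$ is $O(e^{-c|z|})$ for some $c=c(\delta,\Delta)>0$ once $|z|$ is large, hence negligible against every $z^{-\lambda_n}(\log z)^{m(n)}$.

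Next I would handle the main segment. The key computation is the exact identity $\int_0^{\infty e^{i\gamma}}t^{\lambda-1}e^{-zt}\,dt=\Gamma(\lambda)z^{-\lambda}$ for $\re(\lambda)>0$ and $z\in S(\Delta)$ (with $z^{-\lambda}$ read by continuity as in the statement); differentiating $k$ times in $\lambda$ under the integral sign---legitimate by local uniform convergence in $\lambda$---gives $\int_0^{\infty e^{i\gamma}}t^{\lambda-1}(\log t)^k e^{-zt}\,dt=\frac{d^k}{d\lambda^k}\bigl[\Gamma(\lambda)z^{-\lambda}\bigr]$, and hence $\int_0^{\infty e^{i\gamma}}a_n t^{\lambda_n-1}P_n(\log t)e^{-zt}\,dt=a_n P_n(D_n)\bigl[\Gamma(\lambda_n)z^{-\lambda_n}\bigr]$ with $D_n=\frac{d}{d\lambda_n}$. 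Restricting each such integral from the full ray to $(0,\delta e^{i\gamma}]$ changes it by $O(e^{-c|z|})$, by the tail estimate of the previous paragraph applied now to $t^{\lambda_n-1}P_n(\log t)$; summing over $n\le N$ then produces exactly the asserted main terms, and since the right-hand sides are entire in $z$ this also identifies the analytic continuation mentioned in the statement.

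Finally---and this is where the real work lies---I would estimate the contribution of the remainder $R_N(t):=f(t)-\sum_{n=0}^{N}a_n t^{\lambda_n-1}P_n(\log t)$ over $(0,\delta e^{i\gamma}]$. By hypothesis (1), for each $\varepsilon>0$ one can shrink $\delta$ so that $|R_N(t)|\le\varepsilon\,|t|^{\re(\lambda_N)-1}(1+|\log t|)^{m(N)}$ there; parametrizing by $t=ue^{i\gamma}$ and writing $r:=\re(ze^{i\gamma})$, the contribution of $R_N$ is bounded, up to a $\gamma$-dependent constant, by $\varepsilon\int_0^\infty u^{\re(\lambda_N)-1}(1+|\log u|)^{m(N)}e^{-ru}\,du$. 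Evaluating this integral by the model identity with $z$ replaced by $r$, and controlling the discrepancy between $\log u$ and $\log(ru)$ via $|\log u|^{j}\le(|\log(ru)|+\log r)^{j}$, shows it is $\ll_N\varepsilon\,r^{-\re(\lambda_N)}(\log r)^{m(N)}\ll_{N,\Delta}\varepsilon\,|z|^{-\re(\lambda_N)}(\log|z|)^{m(N)}$, uniformly in $\arg z$; since $\varepsilon$ is arbitrary, the remainder is $o\bigl(z^{-\lambda_N}(\log z)^{m(N)}\bigr)$ uniformly on $S(\Delta)$, which closes the induction on $N$. I expect this last estimate to be the main obstacle: one has to keep the bound uniform over the \emph{whole} sector (forcing the repeated appeal to $r\ge|z|\sin\Delta$) while simultaneously ensuring that the power of the logarithm in the error does not inflate, i.e. that the $\log u$ versus $\log(ru)$ mismatch is absorbed so that the final error carries exactly $(\log z)^{m(N)}$ and no higher power. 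By comparison, the contour rotation, the exponential smallness of the tails, and the differentiation of the $\Gamma$-integral are routine.
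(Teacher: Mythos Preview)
The paper does not prove this statement at all: Theorem~\ref{Watsongen} is quoted verbatim as a known result of Wong and Wyman \cite[Theorem~4.1]{wong-wyman}, introduced only so that the special case needed for Theorem~\ref{asymptotic-psi1} can be read off from it. There is therefore no ``paper's own proof'' to compare your proposal against.

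That said, your outline is a faithful reconstruction of the standard proof of this Watson--Wyman lemma and is essentially correct. The three ingredients you isolate---exponential decay of the tail via the existence of $F(z_0)$, generation of the main terms by differentiating the model identity $\int_0^{\infty e^{i\gamma}}t^{\lambda-1}e^{-zt}\,dt=\Gamma(\lambda)z^{-\lambda}$ in $\lambda$, and the $o$-estimate for the remainder via $\re(ze^{i\gamma})\ge|z|\sin\Delta$---are exactly the ones Wong and Wyman use. Your identification of the delicate point (keeping the power of the logarithm at $m(N)$ when passing from $\log u$ to $\log(ru)$, uniformly over the sector) is also accurate; it is handled just as you suggest, by expanding $(\log(ru)-\log r)^j$ binomially and noting that the extra contributions are of strictly lower logarithmic order in $r$, hence absorbed into the leading $(\log r)^{m(N)}$. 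One cosmetic point: in your tail bound you should make explicit that once $|z|$ is large enough you have $\re((z-z_0)e^{i\gamma})\ge\tfrac12|z|\sin\Delta$, so that the $z_0$-shift does not spoil the exponential decay; otherwise the argument is sound.
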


\begin{theorem}\label{asymptotic-psi1}
Let $|\arg(z)|<\pi$. Then as $z\to\infty$,
\begin{align}\label{asymptotic-psi1_eqn}
\psi_1(z)\sim{1\over 2}\log^2(z)-{1 \over 2z}\log z+\sum_{k=1}^{\infty}{B_{2k}\over 2kz^{2k}} \left(\sum_{j=1}^{2k-2}{1\over j}+{1\over 2k-1}-\log z \right).
\end{align}
\end{theorem}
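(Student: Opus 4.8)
\emph{Proof proposal.} The plan is to derive a Laplace‑type integral representation for $\psi_1(z)$, valid for $\textup{Re}(z)>0$, and then extract the asymptotic expansion by two applications of Watson's lemma: the ordinary form and the form with a logarithmic factor, Theorem \ref{Watsongen}. Since $\psi_1(z)=-\gamma_1(z)$ and, by \eqref{hurwitzlaurent}, $\zeta(s,z)-\frac{1}{s-1}=\gamma_0(z)-\gamma_1(z)(s-1)+\cdots$, we have
\begin{equation*}
\psi_1(z)=\frac{d}{ds}\left(\zeta(s,z)-\frac{1}{s-1}\right)\Big|_{s=1}.
\end{equation*}
Starting from $\zeta(s,z)=\frac{1}{\Gamma(s)}\int_0^\infty\frac{t^{s-1}e^{-zt}}{1-e^{-t}}\,dt$ ($\textup{Re}(s)>1$, $\textup{Re}(z)>0$), I would split off the $\frac1t$‑singularity of $\frac{1}{1-e^{-t}}$ at $t=0$ and use $\frac{1}{\Gamma(s)}\int_0^\infty t^{s-2}e^{-zt}\,dt=\frac{z^{1-s}}{s-1}$; analytic continuation to $\textup{Re}(s)>0$ then gives
\begin{equation*}
\zeta(s,z)-\frac{1}{s-1}=\frac{z^{1-s}-1}{s-1}+\frac{1}{\Gamma(s)}\int_0^\infty t^{s-1}e^{-zt}\left(\frac{1}{1-e^{-t}}-\frac1t\right)dt.
\end{equation*}
Differentiating in $s$ at $s=1$ (differentiation under the integral sign is routine, the integral and its $s$‑derivative converging locally uniformly), and using $\frac{d}{ds}\frac{z^{1-s}-1}{s-1}\big|_{s=1}=\frac12\log^2 z$, $\Gamma(1)=1$, $\Gamma'(1)=-\gamma$, I obtain
\begin{equation*}
\psi_1(z)=\frac12\log^2(z)+\gamma\,A(z)+B(z),\qquad\textup{Re}(z)>0,
\end{equation*}
with $A(z):=\int_0^\infty e^{-zt}\big(\frac{1}{1-e^{-t}}-\frac1t\big)dt$ and $B(z):=\int_0^\infty e^{-zt}\log(t)\big(\frac{1}{1-e^{-t}}-\frac1t\big)dt$.

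Next I would feed $A(z)$ and $B(z)$ into Watson's lemma. From $\frac{t}{1-e^{-t}}=\sum_{k\ge0}\frac{(-1)^k B_k}{k!}t^k$ one has the convergent expansion $\frac{1}{1-e^{-t}}-\frac1t=\sum_{k\ge1}\frac{(-1)^k B_k}{k!}t^{k-1}$ near $t=0$; moreover $\frac{1}{1-e^{-t}}-\frac1t$ is analytic in the open sector $|\arg t|<\pi/2$ (its only singularities are simple poles at $t=2\pi i n$, $n\in\Z\setminus\{0\}$, lying on the rays $\arg t=\pm\pi/2$) and is $O(1)$, hence $O(e^{|t|})$, as $t\to\infty$ in every closed subsector. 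Thus the extended Watson's lemma (Theorem \ref{watsextended}, with $\alpha=-\pi/2$, $\beta=\pi/2$, $\lambda=1$) gives, uniformly as $z\to\infty$ in $|\arg z|<\pi$,
\begin{equation*}
A(z)\sim\sum_{k=1}^\infty\frac{(-1)^k B_k}{k!}\Gamma(k)z^{-k}=\sum_{k=1}^\infty\frac{(-1)^k B_k}{k\,z^k}.
\end{equation*}
For $B(z)$ the integrand has the expansion $\sum_{k\ge1}\frac{(-1)^k B_k}{k!}t^{k-1}\log t$ near $t=0$, of the form demanded by Theorem \ref{Watsongen} with $\lambda_{k-1}=k$ and $P_{k-1}(\omega)=\omega$; since $\frac{d}{d\lambda}\big[\Gamma(\lambda)z^{-\lambda}\big]=\Gamma(\lambda)z^{-\lambda}(\psi(\lambda)-\log z)$, that theorem yields
\begin{equation*}
B(z)\sim\sum_{k=1}^\infty\frac{(-1)^k B_k}{k!}\Gamma(k)z^{-k}\big(\psi(k)-\log z\big)=\sum_{k=1}^\infty\frac{(-1)^k B_k}{k\,z^k}\big(\psi(k)-\log z\big),
\end{equation*}
the range $|\arg z|<\pi$ again coming from the analyticity of the integrand in $|\arg t|<\pi/2$.

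Finally I would add the three pieces, obtaining $\psi_1(z)\sim\frac12\log^2 z+\sum_{k\ge1}\frac{(-1)^k B_k}{k\,z^k}\big(\gamma+\psi(k)-\log z\big)$. Since $\psi(k)=-\gamma+\sum_{j=1}^{k-1}\frac1j$ we have $\gamma+\psi(k)=\sum_{j=1}^{k-1}\frac1j$; the term $k=1$ contributes $-\frac{1}{2z}\log z$ (here $B_1=-\frac12$ and the harmonic sum is empty), the terms with odd $k\ge3$ vanish because $B_k=0$, and the term with $k=2\ell$ ($\ell\ge1$) equals $\frac{B_{2\ell}}{2\ell\,z^{2\ell}}\big(\sum_{j=1}^{2\ell-1}\frac1j-\log z\big)$. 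Writing $\sum_{j=1}^{2\ell-1}\frac1j=\sum_{j=1}^{2\ell-2}\frac1j+\frac{1}{2\ell-1}$ and renaming $\ell$ as $k$ gives precisely \eqref{asymptotic-psi1_eqn}.

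The step I expect to be the main obstacle is the careful verification of the hypotheses of the logarithmic Watson's lemma, Theorem \ref{Watsongen} --- the analyticity of $f(t)=\log(t)\big(\frac{1}{1-e^{-t}}-\frac1t\big)$ in $|\arg t|<\pi/2$ together with the exponential bound in every proper subsector --- since it is exactly this sector of analyticity that yields the full range $|\arg z|<\pi$ in the conclusion; the differentiation under the integral sign and the analytic continuation used to establish the integral representation are comparatively routine.
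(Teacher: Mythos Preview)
Your proposal is correct and follows essentially the same approach as the paper: both arrive at the Laplace representation $\psi_1(z)=\tfrac12\log^2 z+\gamma A(z)+B(z)$ with $A,B$ as you define them, and then extract the asymptotics via Watson's lemma together with its logarithmic analogue (Theorem~\ref{Watsongen}), the extension to $|\arg z|<\pi$ coming from the analyticity of $\frac{1}{1-e^{-t}}-\frac{1}{t}$ in the sector $|\arg t|<\pi/2$ exactly as you indicate. The only cosmetic difference is the derivation of that integral representation: the paper differentiates Deninger's Plana-type formula \eqref{plana} for $R(z)$, whereas you differentiate the Hurwitz-zeta integral at $s=1$ using $\psi_1(z)=-\gamma_1(z)$; and the paper recognizes $\gamma A(z)=-\gamma(\psi(z)-\log z)$ and quotes the classical asymptotic \eqref{asypsi} for $\psi$ rather than applying Watson's lemma to $A(z)$ separately.
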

\begin{proof}
We first prove the result for $|\arg(z)|<\pi/2$ and then extend it to $|\arg(z)|<\pi$. To that end, we begin with the analogue for $R(z)$ of Plana's integral for $\log(\G(z))$, namely, for $\textup{Re}(z)>0$, we have \cite[Equation (2.12)]{deninger}
\begin{align}\label{plana}
R(z)=-\zeta''(0)-2\int_{0}^{\infty}\left((z-1)e^{-t}+\frac{e^{-zt}-e^{-t}}{1-e^{-t}}\right)\frac{\g+\log(t)}{t}\, dt.
\end{align}
Differentiating \eqref{plana} under the integral sign with respect to $z$ and using \eqref{psi1r}, we see that
\begin{align}\label{psi1lang}
\psi_1(z)&=-\int_{0}^{\infty}\left(e^{-t}-\frac{te^{-zt}}{1-e^{-t}} \right) \frac{(\g+\log( t))}{t}dt \nonumber \\
	& =-\int_{0}^{\infty}\left(e^{-t}-e^{-zt}\right){(\g+\log (t))}{ dt \over t} -\int_{0}^{\infty}e^{-zt}\left({1\over t}-\frac{1}{1-e^{-t}} \right) {(\g+\log( t))}dt\nonumber \\
	&={1\over 2} \log ^2(z) -\int_{0}^{\infty}e^{-zt}\left({1\over t}-\frac{1}{1-e^{-t}} \right) {(\g+\log( t))}dt,
	\end{align}
	where we used the fact that for Re$(z)>0$,
	\begin{align*}
	\int_{0}^{\infty}(e^{-t}-e^{-zt})(\g+\log(t))\frac{dt}{t}=-\frac{1}{2}\log^{2}(z),
\end{align*}
which follows from \cite[Equation (2.13)]{deninger}\footnote{Deninger requires it with $\alpha>0, \beta>0$, however, it is easily seen to hold for Re$(\a)>0$ and Re$(\b)>0$ as well.}
\begin{align*}
	\int_{0}^{\infty}(e^{-\beta t}-e^{-\alpha t})(\gamma+\log(t))\frac{dt}{t}=\frac{1}{2}\left(\log^{2}(\beta)-\log^{2}(\alpha)\right).
\end{align*}
Thus
	\begin{align}\label{befwat}
	\psi_1(z)&={1\over 2} \log ^2(z)-\g(\psi(z)-\log(z))-\int_{0}^{\infty}e^{-zt}\left({1\over t}-\frac{1}{1-e^{-t}} \right){\log(t)}\, dt,
\end{align}
where we employed the well-known result \cite[p.~903, Formula \textbf{8.361.8}]{gr} that for Re$(z)>0$,
\begin{equation*}
\psi(z)=\log(z)+\int_{0}^{\infty}e^{-zt}\left(\frac{1}{t}-\frac{1}{1-e^{-t}}\right)\, dt.
\end{equation*}
We now find the asymptotic expansion of the integral on the right-hand side of \eqref{befwat}, that is, of
\begin{align*}
	I:=\int_{0}^{\infty}e^{-zt}f(t)\, dt,
\end{align*}
where
\begin{equation*}
f(t):=\left({1\over t}-\frac{1}{1-e^{-t}} \right) \log(t),
\end{equation*}
by applying Theorem \ref{Watsongen}. To that end, observe that for $|t|<2\pi$,
 \begin{align*}
	f(t)& = \log(t)\left({1\over t}-\frac{1}{1-e^{-t}} \right) \nonumber \\
	&=-{\log(t)\over t} \left(\frac{te^t}{e^{t}-1}-1 \right) \nonumber \\
	&=-{\log(t)\over t} \left(\sum_{n=0}^{\infty }{B_n(1)t^n\over n!}-1 \right) \nonumber \\
	&={\log(t)} \sum_{n=0}^{\infty }{(-1)^nB_{n+1}t^n\over (n+1)!},
\end{align*}
where $B_n(x)$ are Bernoulli polynomials. 
Thus, with $P_n(x)=x$, $\l_n=n+1$, $a_n={(-1)^nB_{n+1}\over (n+1)!}$, $n\geq 0$, all of the hypotheses of Theorem \ref{Watsongen} are satisfied, and hence
\begin{align*}
	I&\sim\sum_{n=0}^{\infty}\frac{(-1)^nB_{n+1}}{(n+1)z^{n+1}} \left(\psi(n+1)-\log(z)\right) \nonumber\\
	&= \sum_{n=1}^{\infty}{(-1)^{n-1}B_{n}\over nz^{n}} \left(-\g+\sum_{k=1}^{n-1}{1\over k}-\log(z) \right) ,
\end{align*}
where we used the elementary fact $\psi(n)=-\g+\sum_{k=1}^{n-1}\frac{1}{k}$. 
 Inserting this asymptotic expansion of $I$ in \eqref{befwat} along with that of $\psi(z)$, namely, for $|\arg z|<\pi$,
\begin{align}\label{asypsi}
\psi(z) 
&=\log(z)-\frac{1}{2z}-\sum_{n=1}^{\infty}\frac{B_{2n}}{2nz^{2n}},
\end{align}
as $z\to \infty$, we arrive at 
\begin{align*}
	\psi_1(z)&\sim{1\over 2} \log ^2(z)-\g\left(-\frac{1}{2z}-\sum_{n=1}^{\infty}\frac{B_{2n}}{2nz^{2n}}\right)+\sum_{n=1}^{\infty}{(-1)^{n}B_{n}\over nz^{n}} \left(-\g+\sum_{j=1}^{n-1}{1\over j}-\log(z) \right) \nonumber\\
	&={1\over 2} \log ^2(z)+\sum_{n=1}^{\infty}{(-1)^{n}B_{n}\over nz^{n}} \left(\sum_{j=1}^{n-1}{1\over j}-\log(z) \right)\nonumber\\
	&={1\over 2}\log^2(z)-{1 \over 2z}\log z+\sum_{n=1}^{\infty}{B_{2n}\over 2nz^{2n}} \left(\sum_{j=1}^{2n-2}{1\over j}+{1\over 2n-1}-\log(z) \right)
	\end{align*}
	using the well-known facts $B_1=-1/2$ and $B_{2n-1}=0, n>1$. This proves \eqref{asymptotic-psi1_eqn} for $|\arg(z)|<\pi/2$. 
	
	To extend it to $|\arg(z)|<\pi$, we use the analogue of Theorem \ref{watsextended} containing a logarithmic factor in the integrand of the concerned integral, which practically changes none of the hypotheses in the statement of Theorem \ref{watsextended} and its proof\footnote{Temme \cite[p.~15]{temme2015} refers to Olver \cite[p.~114]{olver} for a proof.} since $\log(t)=O\left(t^{\epsilon}\right)$ as $t\to\infty$ for any $\epsilon>0$. We apply it with $\alpha=-\pi/2$ and $\beta=\pi/2$. It shows that the expansion in \eqref{asymptotic-psi1_eqn} holds for $|\arg(z)|<\pi$.
\end{proof}

Our next result is a new analogue of Kloosterman's result for $\psi(x)$ \cite[p.~24-25]{titch}.
\begin{theorem}\label{kloosterman-type}
Let $|\arg(z)|<\pi$. Let $\psi_1$ be defined in \eqref{psi1z}. For $0<c=\textup{Re}(s)<1$,
\begin{align}\label{imt}
	\psi_1(z+1)-\frac{1}{2}\log^{2}(z)=\frac{1}{2\pi i}\int_{(c)}\frac{\pi\zeta(1-s)}{\sin(\pi s)}\left(\gamma-\log(z)+\psi(s)\right)z^{-s}\, ds.
	\end{align}
\end{theorem}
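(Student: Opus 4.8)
The argument is the $\psi_1$-analogue of Kloosterman's manipulation for $\psi$: I expand each term of the defining series of $\psi_1(z+1)$ as a Mellin--Barnes integral, sum over $n$, and move the line of integration. Suppose first that $\textup{Re}(z)>0$. Combining the difference equation $R(z+1)-R(z)=\log^2(z)$ (which persists on $\mathbb{D}$ by analytic continuation), \eqref{psi1r} and \eqref{psi1z} gives
\begin{equation*}
\psi_1(z+1)=\psi_1(z)+\frac{\log(z)}{z}=-\gamma_1-\sum_{n=1}^{\infty}\left(\frac{\log(n+z)}{n+z}-\frac{\log(n)}{n}\right).
\end{equation*}
I would start from the classical representation $(n+z)^{-\beta}=\frac{1}{\Gamma(\beta)}\cdot\frac{1}{2\pi i}\int_{(c)}\Gamma(s)\Gamma(\beta-s)\,n^{s-\beta}z^{-s}\,ds$, valid for $0<c<\textup{Re}(\beta)$, differentiate it under the integral sign with respect to $\beta$ (permissible since $\Gamma(s)\Gamma(\beta-s)$ decays exponentially in $|\textup{Im}(s)|$), and put $\beta=1$ using $\psi(1)=-\gamma$; this yields a Mellin--Barnes integral for $\tfrac{\log(n+z)}{n+z}$ valid for $0<c<1$. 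Shifting its contour to a line $\textup{Re}(s)=c'\in(-1,0)$ crosses only the simple pole of $\Gamma(s)$ at $s=0$, whose residue equals $\tfrac{\log(n)}{n}$, so that
\begin{equation*}
\frac{\log(n+z)}{n+z}-\frac{\log(n)}{n}=-\frac{1}{2\pi i}\int_{(c')}\Gamma(s)\Gamma(1-s)\,n^{s-1}\bigl(\gamma-\log(n)+\psi(1-s)\bigr)z^{-s}\,ds .
\end{equation*}

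Summing over $n\ge 1$ is then legitimate: on $\textup{Re}(s)=c'\in(-1,0)$ one has $\textup{Re}(1-s)\in(1,2)$, so $\sum_{n\ge1}n^{s-1}=\zeta(1-s)$ and $\sum_{n\ge1}n^{s-1}\log(n)=-\zeta'(1-s)$, and the interchange of sum and integral is justified by the exponential decay of $\Gamma(s)\Gamma(1-s)=\pi/\sin(\pi s)$ together with $\sum_n n^{c'-1}\log(n)<\infty$. This produces a Mellin--Barnes integral for $\psi_1(z+1)+\gamma_1$ over the line $\textup{Re}(s)=c'$ whose integrand is $\Gamma(s)\Gamma(1-s)\bigl((\gamma+\psi(1-s))\zeta(1-s)+\zeta'(1-s)\bigr)z^{-s}$. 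I then push the contour to $\textup{Re}(s)=c\in(0,1)$; the only pole crossed is at $s=0$, where the integrand has a triple pole, because $\Gamma(s)\Gamma(1-s)\sim 1/s$ and $\zeta'(1-s)\sim -1/s^2$ while $\zeta(1-s)\sim -1/s$. Expanding $\zeta(1-s)=-\tfrac1s+\gamma+\gamma_1 s+\cdots$, $\psi(1-s)=-\gamma-\tfrac{\pi^2}{6}s+\cdots$, $\zeta'(1-s)=-\tfrac{1}{s^2}-\gamma_1-\cdots$, $\Gamma(s)\Gamma(1-s)=\tfrac1s+\tfrac{\pi^2}{6}s+\cdots$ and $z^{-s}=1-(\log z)s+\tfrac12\log^2(z)s^2+\cdots$, the coefficient of $1/s$ works out to $-\gamma_1-\tfrac12\log^2(z)$, so that
\begin{equation*}
\psi_1(z+1)-\tfrac12\log^2(z)=\frac{1}{2\pi i}\int_{(c)}\Gamma(s)\Gamma(1-s)\Bigl((\gamma+\psi(1-s))\zeta(1-s)+\zeta'(1-s)\Bigr)z^{-s}\,ds .
\end{equation*}

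It remains to recognise the right-hand side of \eqref{imt} in this last integral. Writing $\pi/\sin(\pi s)=\Gamma(s)\Gamma(1-s)$ and using $-\log(z)\,z^{-s}=\partial_s(z^{-s})$, an integration by parts in $s$ (the boundary terms at $c\pm i\infty$ vanish since $\pi/\sin(\pi s)$ decays exponentially while $|z^{-s}|=|z|^{-c}e^{t\arg(z)}$ with $|\arg(z)|<\pi$) turns $\int_{(c)}\Gamma(s)\Gamma(1-s)\zeta(1-s)(-\log z)z^{-s}\,ds$ into $-\int_{(c)}\partial_s\bigl(\Gamma(s)\Gamma(1-s)\zeta(1-s)\bigr)z^{-s}\,ds$; since $\partial_s\bigl(\Gamma(s)\Gamma(1-s)\zeta(1-s)\bigr)=\Gamma(s)\Gamma(1-s)\bigl(\zeta(1-s)(\psi(s)-\psi(1-s))-\zeta'(1-s)\bigr)$, the right-hand side of \eqref{imt} collapses to precisely the integral displayed just above. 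This establishes \eqref{imt} for $\textup{Re}(z)>0$; since both sides are analytic on $|\arg(z)|<\pi$ (the integral because its integrand is $O(e^{-\pi|t|+t\arg(z)})$ times a polynomial in $t=\textup{Im}(s)$), the identity extends to all $|\arg(z)|<\pi$ by analytic continuation.

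I expect the residue evaluation at $s=0$ to be the most delicate point: it is a genuine triple pole, several Stieltjes constants ($\gamma$, $\gamma_1$, $\gamma_2$) as well as $\zeta(3)$ enter the Laurent expansions of the individual factors, and one must carry enough terms to be sure that everything except $-\gamma_1-\tfrac12\log^2(z)$ cancels. The interchange of summation and integration, and the vanishing of the boundary terms in the integration by parts, are routine consequences of the exponential decay of $\pi/\sin(\pi s)$.
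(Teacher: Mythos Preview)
Your argument is correct and complete, but it follows a genuinely different route from the paper's.

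The paper starts from Deninger's Plana-type integral for $R(z)$, which after differentiation gives
\[
\psi_1(z)-\tfrac12\log^2(z)=\int_0^\infty e^{-zt}\Bigl(\tfrac1t-\tfrac{1}{1-e^{-t}}\Bigr)(\gamma+\log t)\,dt-\tfrac{\log z}{z},
\]
and then applies Parseval's formula for Mellin transforms to the remaining integral, pairing the known transforms $M\bigl[\tfrac{1}{e^t-1}-\tfrac1t;s\bigr]=\Gamma(s)\zeta(s)$ and $M\bigl[e^{-zt}(\gamma+\log t);1-s\bigr]=\Gamma(1-s)z^{s-1}(\gamma-\log z+\psi(1-s))$. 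A single substitution $s\mapsto1-s$ and the functional equation $\psi_1(z)=\psi_1(z+1)-\tfrac{\log z}{z}$ then give \eqref{imt}. No contour shift through a higher-order pole is needed, and no integration by parts: the target form $(\gamma-\log z+\psi(s))$ appears directly from the Mellin transform of $e^{-zt}(\gamma+\log t)$.

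Your approach instead works termwise from the series \eqref{psi1z}: you write each $\frac{\log(n+z)}{n+z}$ as a Mellin--Barnes integral by differentiating the beta-integral for $(n+z)^{-\beta}$, sum over $n$ on a line $\textup{Re}(s)=c'\in(-1,0)$ to produce $\zeta(1-s)$ and $\zeta'(1-s)$, and then push the contour back to $(0,1)$ through the triple pole at $s=0$. This yields an integral whose integrand is $\Gamma(s)\Gamma(1-s)\bigl[(\gamma+\psi(1-s))\zeta(1-s)+\zeta'(1-s)\bigr]z^{-s}$, which you then match to the stated form via an integration by parts in $s$, using $\partial_s(z^{-s})=-\log(z)z^{-s}$. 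The residue computation and the integration by parts both check out; in particular the triple-pole residue really is $-\gamma_1-\tfrac12\log^2(z)$, with all the $\pi^2/6$, $\zeta(3)$ and $\gamma_2$ contributions cancelling as you anticipated.

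What each approach buys: the paper's method is shorter and avoids the triple-pole bookkeeping, but it relies on the Plana representation \eqref{plana} for $R(z)$ as an external input. Your method is more self-contained --- it uses only the defining series \eqref{psi1z} and standard Mellin--Barnes machinery --- at the cost of the residue calculation and the extra integration-by-parts step to convert $\psi(1-s)$ and $\zeta'(1-s)$ into the $\psi(s)$ and $-\log z$ appearing in \eqref{imt}. Your route also generalises transparently to $\psi_k$ (differentiate $k$ times in $\beta$), which is relevant for the programme sketched in the paper's concluding remarks.
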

\begin{proof}
We first prove the result for $z>0$ and later extend it to $|\arg(z)|<\pi$ by analytic continuation. 
	Using \eqref{psi1lang}, we have
	\begin{align}\label{aftpar0}
	\psi_1(z)-\frac{1}{2}\log^{2}(z)
	&=\int_{0}^{\infty}e^{-zt}\left(\gamma+\log(t)\right)\, dt+\int_{0}^{\infty}\left(\frac{1}{e^{t}-1}-\frac{1}{t}\right)e^{-zt}\left(\gamma+\log(t)\right)\, dt\nonumber\\
	&=-\frac{\log(z)}{z}+\int_{0}^{\infty}\left(\frac{1}{e^{t}-1}-\frac{1}{t}\right)e^{-zt}\left(\gamma+\log(t)\right)\, dt,
	\end{align}
	where in the last step we employed \cite[p.~573, formula \textbf{4.352.1}]{gr}
	\begin{equation}\label{par1}
	\int_{0}^{\infty}t^{s-1}e^{-zt}\left(\gamma+\log(t)\right)\, dt=\frac{\Gamma(s)}{z^{s}}\left(\gamma-\log(z)+\psi(s)\right)\hspace{10mm}(\textup{Re}(s)>0)
\end{equation}	
with $s=1$ and the fact that $\psi(1)=-\gamma$.
	 We now evaluate the integral on the right-hand side of \eqref{aftpar0} by means of Parseval's formula for Mellin transforms \cite[p.~83, Equation (3.1.13)]{kp}. 
For $0<\textup{Re}(s)<1$, we have \cite[p.~23, Equation (2.7.1)]{titch}
\begin{equation}\label{gammazeta}
\int_{0}^{\infty}t^{s-1}\left(\frac{1}{e^{t}-1}-\frac{1}{t}\right)\, dt=\Gamma(s)\zeta(s).
\end{equation} 
Therefore, along with \eqref{par1} and the equation given above, for $0<c=\textup{Re}(s)<1$, Parseval's formula \eqref{parseval-1} implies
\begin{align}\label{aftpar}
\int_{0}^{\infty}\left(\frac{1}{e^{t}-1}-\frac{1}{t}\right)e^{-zt}\left(\gamma+\log(t)\right)\, dt
&=\frac{1}{2\pi i}\int_{(c)}\Gamma(s)\zeta(s)\frac{\Gamma(1-s)}{z^{1-s}}\left(\gamma-\log(z)+\psi(1-s)\right)\, ds\nonumber\\
&=\frac{1}{2\pi iz}\int_{(c)}\frac{\pi\zeta(s)}{\sin(\pi s)}\left(\gamma-\log(z)+\psi(1-s)\right)z^{s}\, ds\nonumber\\
&=\frac{1}{2\pi i}\int_{(c')}\frac{\pi\zeta(1-s)}{\sin(\pi s)}\left(\gamma-\log(z)+\psi(s)\right)z^{-s}\, ds,
\end{align}
where $0<c'<1$. In the second step, we used the reflection formula $\G(s)\G(1-s)=\pi/\sin(\pi s),s\notin\mathbb{Z}$, 
and in the last step, we replaced $s$ by $1-s$. 

Now \eqref{imt} follows by substituting \eqref{aftpar} in \eqref{aftpar0} and using the fact \cite[Equation (8.3)]{dilcher}
\begin{equation}\label{functionalpsi1}
\psi_1(z)=\psi_1(z+1)-\frac{\log(z)}{z}.
\end{equation}
This proves \eqref{imt} for $z>0$. The result is easily seen to be true for any complex $z$ such that $|\arg(z)|<\pi$ by analytic continuation with the help of \eqref{psi1z}, elementary bounds on the Riemann zeta function, Stirling's formula \eqref{strivert} and the corresponding estimate for $\psi(s)$. 
\end{proof}

In the next theorem, we give a closed-form evaluation of an infinite series of integrals in terms of the digamma function and $\psi_1(z)$. This theorem is an analogue of \eqref{dgkmresult} and will play a fundamental role in the proof of Theorem \ref{loglamb}. 
\begin{theorem}\label{analoguedgkm}
For $\textup{Re}(w)>0$, we have
\begin{align}\label{analoguedgkm_eqn}
4\sum_{m=1}^{\infty}\int_{0}^{\infty}\frac{u\cos(u)\log(u/w)}{u^2+(2\pi mw)^2}\, du
&=\psi_1(iw)-\frac{1}{2}\log^{2}(iw)+\psi_1(-iw)-\frac{1}{2}\log^{2}(-iw)+\frac{\pi}{2w}\nonumber\\
&\quad+\gamma\left(\psi(iw)+\psi(-iw)-2\log(w)\right).
\end{align}
\end{theorem}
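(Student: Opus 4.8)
The plan is to evaluate the inner integral $\int_0^\infty \frac{u\cos(u)\log(u/w)}{u^2+(2\pi m w)^2}\,du$ in closed form and then sum over $m$. First I would write $\log(u/w)=\log(u)-\log(w)$, splitting the integral into two pieces. The piece with $-\log(w)$ produces $-\log(w)\sum_m\int_0^\infty \frac{u\cos(u)}{u^2+(2\pi m w)^2}\,du$, which is handled directly by \eqref{dgkmresult} (with $w$ there replaced by $2\pi w$): it contributes a multiple of $\log(w)\bigl\{\log(w)-\tfrac12(\psi(iw)+\psi(-iw))\bigr\}$ after simplifying $\log\bigl(\tfrac{2\pi w}{2\pi}\bigr)=\log(w)$. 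So the real content lies in the integral weighted by $\log(u)$, namely $J_m:=\int_0^\infty \frac{u\cos(u)\log(u)}{u^2+(2\pi m w)^2}\,du$.

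For $J_m$ I would use the Mellin–Parseval machinery of \eqref{parseval-1}, exactly as in the proof of Theorem \ref{kloosterman-type}. Writing $a=2\pi m w$, one has the standard Mellin transform $\int_0^\infty u^{s-1}\cos(u)\,du=\Gamma(s)\cos(\pi s/2)$ on $0<\mathrm{Re}(s)<1$, and the Mellin transform of $\frac{u\log u}{u^2+a^2}$ (as a function of $u$) can be obtained by differentiating the beta-type integral $\int_0^\infty \frac{u^{s-1}\cdot u}{u^2+a^2}\,du=\tfrac{\pi}{2}a^{s-1}\csc\!\bigl(\tfrac{\pi(s+1)}{2}\bigr)$ with respect to $s$. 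This turns $J_m$ into a contour integral whose integrand, after using the reflection formula and collecting terms, looks like $\zeta$-free elementary factors times $a^{-s}=(2\pi m w)^{-s}$. Then $\sum_{m\ge 1}(2\pi m w)^{-s}=(2\pi w)^{-s}\zeta(s)$, and I would interchange sum and integral (justified by absolute convergence on a vertical line with $\mathrm{Re}(s)$ slightly above $1$, or by first shifting), arriving at a single Mellin–Barnes integral in which $\zeta(s)$ has appeared. Comparing with \eqref{par1} and \eqref{gammazeta}, this integral should be recognizable — up to the elementary $-\log(w)$ and $\gamma$ terms already peeled off — as precisely the contour integral representation of $\psi_1(iw)+\psi_1(-iw)-\tfrac12\log^2(iw)-\tfrac12\log^2(-iw)$ furnished by Theorem \ref{kloosterman-type} (applied at $z=iw$ and $z=-iw$ and added, using $\psi_1(z)=\psi_1(z+1)-\log(z)/z$ from \eqref{functionalpsi1} to pass between $\psi_1(z)$ and $\psi_1(z+1)$, which also accounts for the $\tfrac{\pi}{2w}$ term via $-\tfrac{\log(iw)}{iw}-\tfrac{\log(-iw)}{-iw}$).

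An alternative, perhaps cleaner, route for $J_m$ is to bypass the contour integral: use the classical evaluation $\int_0^\infty \frac{u\cos(u)}{u^2+a^2}\,du$ in terms of exponential integrals / the function $\mathrm{Ci}$, differentiate an appropriate parameter, and then recognize the resulting $m$-sum via the series definition \eqref{psi1z} of $\psi_1$. But I expect the Mellin route to align most transparently with the already-established Theorem \ref{kloosterman-type}, since that theorem was evidently proved with this application in mind (it is the stated ``analogue of \eqref{dgkmresult}'').

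The main obstacle will be the bookkeeping in matching constants: tracking the $\gamma$-dependent terms, the $\log(w)$ versus $\log(iw)=\log(w)+i\pi/2$ and $\log(-iw)=\log(w)-i\pi/2$ discrepancies (whose squares contribute the $\pi^2$-type and cross terms), and confirming that the spurious imaginary parts cancel between the $iw$ and $-iw$ contributions so that the right-hand side is what is claimed. A secondary technical point is justifying the interchange of the infinite sum over $m$ with the integral (and with the limit defining $\psi_1$), which I would handle by estimating the integrand on the relevant vertical line using Stirling's formula \eqref{strivert} and polynomial bounds on $\zeta$, exactly as invoked at the end of the proof of Theorem \ref{kloosterman-type}. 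Once the identity is established for $w>0$, analytic continuation extends it to $\mathrm{Re}(w)>0$.
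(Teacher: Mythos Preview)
Your proposal is correct and follows essentially the same route as the paper: split off the $-\log(w)$ piece via \eqref{dgkmresult}, convert the $\log(u)$-weighted integral into a Mellin--Barnes integral by Parseval, sum over $m$ to introduce $\zeta(s)$, and identify the result with the contour-integral representation of $\psi_1(\pm iw)-\tfrac12\log^2(\pm iw)$ from Theorem~\ref{kloosterman-type}, using \eqref{functionalpsi1} to produce the $\tfrac{\pi}{2w}$. The only cosmetic difference is direction---the paper starts from Theorem~\ref{kloosterman-type} applied at $z=\pm iw$ and works toward the integral (picking up an auxiliary piece $J_2$ and a $\tfrac{\pi}{2}e^{-2\pi m w}$ term from the Parseval step that cancel against each other), whereas you propose to start from the integral and recognize the $\psi_1$ side; the ingredients and the bookkeeping you anticipate are the same.
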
 
\begin{proof}
Using Theorem \ref{kloosterman-type}, once with $x=iw$, and again with $x=-iw$ and adding the respective sides, for $0<c=\textup{Re}(s)<1$, we obtain
\begin{align}\label{2tkloosterman}
&\psi_1(iw+1)-\frac{1}{2}\log^{2}(iw)+\psi_1(-iw+1)-\frac{1}{2}\log^{2}(-iw)\nonumber\\
&=\frac{1}{2\pi i}\int\limits_{(c)}\frac{\pi\zeta(1-s)}{\sin(\pi s)}\left(\gamma-\log(iw)+\psi(s)\right)(iw)^{-s}ds+\frac{1}{2\pi i}\int\limits_{(c)}\frac{\pi\zeta(1-s)}{\sin(\pi s)}\left(\gamma-\log(-iw)+\psi(s)\right)(-iw)^{-s}ds\nonumber\\
&=\frac{1}{2\pi i}\int\limits_{(c)}\frac{2\pi\zeta(1-s)}{\sin(\pi s)}\left(\gamma+\psi(s)\right)\cos\left(\frac{\pi s}{2}\right)w^{-s}\, ds-I_2,
\end{align}
where
\begin{align}
I_2:=\frac{1}{2\pi i}\int_{(c)}\frac{\pi\zeta(1-s)}{\sin(\pi s)}\left(e^{-\frac{i\pi s}{2}}\log(iw)+e^{\frac{i\pi s}{2}}\log(-iw)\right)w^{-s}\, ds.\label{i2}
\end{align}
Next, using the series expansions of the exponential functions  $e^{\frac{i\pi s}{2}}$ and $e^{-\frac{i\pi s}{2}}$ and splitting them according to $n$ even and $n$ odd, it is easily seen that
\begin{equation}\label{elem_idty}
e^{-\frac{i\pi s}{2}}\log(iw)+e^{\frac{i\pi s}{2}}\log(-iw)=2\log(w)\cos\left(\frac{\pi s}{2}\right)+\pi\sin\left(\frac{\pi s}{2}\right).
\end{equation}
Hence from \eqref{2tkloosterman}, \eqref{i2} and \eqref{elem_idty}, we see that
\begin{align}\label{j1-j2}
\psi_1(iw+1)-\frac{1}{2}\log^{2}(iw)+\psi_1(-iw+1)-\frac{1}{2}\log^{2}(-iw)=:J_1-J_2,
\end{align}
where
\begin{align*}
J_1&:=\frac{1}{2\pi i}\int_{(c)}\frac{\pi\zeta(1-s)}{\sin\left(\frac{\pi s}{2}\right)}\left(\gamma+\psi(s)-\log(w)\right)w^{-s}\, ds,\\
J_2&:=\frac{1}{2\pi i}\int_{(c)}\frac{\pi^2\zeta(1-s)}{2\cos\left(\frac{\pi s}{2}\right)}w^{-s}\, ds.
\end{align*}
Using the functional equation of $\zeta(s)$ \cite[p.~13, Equation (2.1.1)]{titch}
\begin{align}\label{zetaasym}
	\zeta(s) = 2^s \pi^{s-1} \Gamma(1-s) \zeta(1-s) \sin\left(  \frac{1}{2}\pi s \right),
\end{align}
with $s$ replaced by $1-s$, we have 
\begin{align}\label{j2eval}
J_2=\frac{\pi^2}{2\pi i}\int_{(c)}\Gamma(s)\zeta(s)(2\pi w)^{-s}\, ds
=\pi^{2}\left(\frac{1}{e^{2\pi w}-1}-\frac{1}{2\pi w}\right),
\end{align}
where the last step follows from \eqref{gammazeta}. Again using \eqref{zetaasym} with $s$ replaced by $1-s$, we observe that
\begin{align*}
J_1=\frac{1}{2\pi i}\int_{(c)}2\pi\Gamma(s)\zeta(s)\cot\left(\frac{\pi s}{2}\right)\left(\gamma+\psi(s)-\log(w)\right)(2\pi w)^{-s}\, ds.
\end{align*}
Now it is important to observe that shifting the line of integration from Re$(s)=c$, $0<c<1$ to Re$(s)=d,\ 1<d<2$ does not introduce any pole of the integrand. So consider the rectangular contour $[c-iT, d-iT], [d-iT, d+iT], [d+iT, c+iT]$ and $[c+iT, c-iT]$.  By Cauchy's residue theorem and the fact that the integral along the horizontal segments of the contour tend to zero as $T\to\infty$, as can be seen from \eqref{strivert}, elementary bounds of the zeta function and the fact \cite[Equation (15)]{gonek} $\psi(s)=\log(s)+O(1/|s|)$, it is seen that
\begin{align}\label{before_splitting}
J_1&=\frac{1}{2\pi i}\int_{(d)}2\pi\Gamma(s)\zeta(s)\cot\left(\frac{\pi s}{2}\right)\left(\gamma+\psi(s)-\log(w)\right)(2\pi w)^{-s}\, ds\nonumber\\
&=\sum_{m=1}^{\infty}\frac{1}{2\pi i}\int_{(d)}2\pi\Gamma(s)\cot\left(\frac{\pi s}{2}\right)\left(\gamma+\psi(s)-\log(w)\right)(2\pi mw)^{-s}\, ds\nonumber\\
&=2\pi\sum_{m=1}^{\infty}\left\{\frac{\left(\gamma-\log(w)\right)}{2\pi i}\int_{(d)}\Gamma(s)\cot\left(\frac{\pi s}{2}\right)(2\pi mw)^{-s}\, ds+\frac{1}{2\pi i}\int_{(d)}\Gamma(s)\psi(s)\cot\left(\frac{\pi s}{2}\right)(2\pi mw)^{-s}\, ds\right\},
\end{align}
where in the last step, we used the series representation for $\zeta(s)$ and then interchanged the order of summmation and integration which is valid because of absolute and uniform convergence. We now find convenient representations for the two line integrals.

From \cite[Lemma 4.1]{DGKM}, for $0<\textup{Re}(s)=(c_1)<2$ and Re$(z)>0$,
\begin{align*}
\frac{1}{2\pi i}\int_{(c_1)}\Gamma(s)\cot\left(\frac{\pi s}{2}\right)z^{-s}\, ds=\frac{2}{\pi}\int_{0}^{\infty}\frac{u\cos(u)}{u^2+z^2}\, du.
\end{align*}
Hence invoking the above result with $z=2\pi mw$, we find that
\begin{align}\label{while_splitting}
\sum_{m=1}^{\infty}\frac{1}{2\pi i}\int_{(d)}\Gamma(s)\cot\left(\frac{\pi s}{2}\right)(2\pi mw)^{-s}\, ds&=\frac{2}{\pi}\sum_{m=1}^{\infty}\int_{0}^{\infty}\frac{u\cos(u)}{u^2+(2\pi mw)^2}\, du\nonumber\\
&=\frac{1}{\pi}\left(\log(w)-\frac{1}{2}\left(\psi(iw)+\psi(-iw)\right)\right),
\end{align}
where in the last step, we employed \eqref{dgkmresult}.

Here, it is important to note that the series representation of $J_1$ in \eqref{before_splitting} and the convergence of the series in \eqref{while_splitting} together imply that the series
\begin{align*}
\sum_{m=1}^{\infty}\frac{1}{2\pi i}\int_{(d)}\Gamma(s)\psi(s)\cot\left(\frac{\pi s}{2}\right)(2\pi mw)^{-s}\, ds
\end{align*}
converges too. 

Using Parseval's formula, we now suitably transform the line integral in the above equation into an integral of a real variable. From \cite[p.~536, Formula \textbf{2.6.32.2}]{prudnikov1}, for $0<c=\textup{Re}(s)<1$,
\begin{equation*}
\int_{0}^{\infty}u^{s-1}\cos(u)\log(u)\, du=\Gamma(s)\left(\psi(s)\cos\left(\frac{\pi s}{2}\right)-\frac{\pi}{2}\sin\left(\frac{\pi s}{2}\right)\right),
\end{equation*}
whereas for $-1<\textup{Re}(s)<1$ and Re$(z)>0$, we have
\begin{align*}
\int_{0}^{\infty}u^{s-1}\frac{u}{u^2+z^2}\, du=\frac{\pi}{2}z^{s-1}\sec\left(\frac{\pi s}{2}\right). 
\end{align*}
Hence from \eqref{parseval-1}, for $0<\textup{Re}(s)=c<1$, we have
\begin{equation*}
\frac{1}{2\pi i}\int_{(c)}\Gamma(s)\psi(s)\cot\left(\frac{\pi s}{2}\right)(2\pi mw)^{-s}\, ds=\frac{2}{\pi}\int_{0}^{\infty}\frac{u\cos(u)\log(u)}{u^2+(2\pi mw)^2}\, du+\frac{\pi}{2}e^{-2\pi mw}.
\end{equation*}
Now again, shifting the line of integration from Re$(s)=c$ to Re$(s)=d, 1<d<2$, noting that there is no pole of the integrand, applying Cauchy's residue theorem and making use of the fact that the integrals along the horizontal segments tend to zero as the height of the contour tends to $\infty$, we see that 
 \begin{align*}
\frac{1}{2\pi i}\int_{(c)}\Gamma(s)\psi(s)\cot\left(\frac{\pi s}{2}\right)(2\pi mw)^{-s}\, ds=
\frac{1}{2\pi i}\int_{(d)}\Gamma(s)\psi(s)\cot\left(\frac{\pi s}{2}\right)(2\pi mw)^{-s}\, ds.
\end{align*}
Hence
\begin{align}\label{after_splitting}
\sum_{m=1}^{\infty}\frac{1}{2\pi i}\int_{(d)}\Gamma(s)\psi(s)\cot\left(\frac{\pi s}{2}\right)(2\pi mw)^{-s}\, ds=\sum_{m=1}^{\infty}\left\{\frac{2}{\pi}\int_{0}^{\infty}\frac{u\cos(u)\log(u)}{u^2+(2\pi mw)^2}\, du+\frac{\pi}{2}e^{-2\pi mw}\right\}.
\end{align}
Therefore, substituting \eqref{after_splitting} and \eqref{while_splitting} in \eqref{before_splitting}, we get
\begin{align}\label{j1eval}
J_1&=2\pi\Bigg[\frac{\left(\gamma-\log(w)\right)}{2\pi}\left(2\log(w)-\left(\psi(iw)+\psi(-iw)\right)\right)+\sum_{m=1}^{\infty}\Bigg\{\frac{2}{\pi}\int_{0}^{\infty}\frac{u\cos(u)\log(u)}{u^2+(2\pi mw)^2}\, du+\frac{\pi}{2}e^{-2\pi mw}\Bigg\}\Bigg]\nonumber\\
&=2(\gamma-\log(w))\left(\log(w)-\frac{1}{2}\left(\psi(iw)+\psi(-iw)\right)\right)+4\sum_{m=1}^{\infty}\int_{0}^{\infty}\frac{u\cos(u)\log(u)}{u^2+(2\pi mw)^2}\, du+\frac{\pi^2}{e^{2\pi w}-1}.
\end{align}
Now from \eqref{j1-j2}, \eqref{j2eval} and \eqref{j1eval}, we have
\begin{align*}
&\psi_1(iw+1)-\frac{1}{2}\log^{2}(iw)+\psi_1(-iw+1)-\frac{1}{2}\log^{2}(-iw)\nonumber\\
&=2(\gamma-\log(w))\left(\log(w)-\frac{1}{2}\left(\psi(iw)+\psi(-iw)\right)\right)+4\sum_{m=1}^{\infty}\int_{0}^{\infty}\frac{u\cos(u)\log(u)}{u^2+(2\pi mw)^2}\, du+\frac{\pi^2}{e^{2\pi w}-1}\nonumber\\
&\quad-\pi^{2}\left(\frac{1}{e^{2\pi w}-1}-\frac{1}{2\pi w}\right).
\end{align*}
Now using \eqref{functionalpsi1} twice and using the elementary fact $\log(iw)-\log(-iw)=\pi i$ for Re$(w)>0$, we are led to
\begin{align}\label{f1}
4\sum_{m=1}^{\infty}\int_{0}^{\infty}\frac{u\cos(u)\log(u)}{u^2+(2\pi mw)^2}\, du
&=\psi_1(iw)-\frac{1}{2}\log^{2}(iw)+\psi_1(-iw)-\frac{1}{2}\log^{2}(-iw)+\frac{\pi}{2w}\nonumber\\
&\quad+\left(\gamma-\log(w)\right)\left(\psi(iw)+\psi(-iw)-2\log(w)\right).
\end{align}
Lastly, employing \eqref{dgkmresult} again with $w$ replaced by $2\pi w$, we see that
\begin{align}\label{f2}
-4\log(w)\sum_{m=1}^{\infty}\int_{0}^{\infty}\frac{u\cos(u)}{u^2+(2\pi mw)^2}\, du=\log(w)\left(\psi(iw)+\psi(-iw)-2\log(w)\right).
\end{align}
Finally, adding the respective sides of \eqref{f1} and \eqref{f2}, we are led to \eqref{analoguedgkm_eqn}. This completes the proof.
\end{proof}
\section{A new representation for $\left.\frac{\partial^2}{\partial b^2}E_{2, b}(z)\right|_{b=1}$}\label{mittag}

The two-variable Mittag-Leffler function $E_{\alpha, \beta}(z)$, introduced by Wiman \cite{wiman}, is defined by
\begin{align}\label{2varmldef}
E_{\alpha, \beta}(z):=\sum_{k=0}^{\infty}\frac{z^k}{\G(\alpha k+\beta)}\hspace{10mm}(\textup{Re}(\alpha)>0, \textup{Re}(\beta)>0).
\end{align}
There is an extensive literature on these Mittag-Leffler functions, see, for example, \cite{gkmr}, and the references therein. Yet, closed-form expressions exist only for the first derivatives of $E_{\alpha, \beta}(z)$ with respect to the parameters $\alpha$ and $\beta$. The reader is referred to a recent paper of Apelblat \cite{apelblat} for a collection of such evaluations all of which involve the $\textup{Shi}(z)$ and $\textup{Chi}(z)$ functions defined in \eqref{shichi}.

From \eqref{2varmldef}, it is easy to see that \cite[Equation (98)]{apelblat}
\begin{align}\label{relation}
\frac{\partial^2}{\partial\beta^2}E_{\alpha, \beta}(t)=\sum_{k=0}^{\infty}\frac{\psi^{2}(\alpha k+\beta)-\psi'(\alpha k+\beta)}{\G(\alpha k+\beta)}t^k.
\end{align}
However, there are no closed-form evaluations known for the above series.
 In what follows, we establish a new result which transforms $\left.\frac{\partial^2}{\partial\beta^2}E_{2, \beta}(t)\right|_{\beta=1}$ into a suitable series which is absolutely essential in proving Theorem \ref{loglamb}. Before we embark upon the proof though, we need a few lemmas concerning the \emph{hyperbolic sine and cosine integrals} $\textup{Shi}(z)$ and $\textup{Chi}(z)$ defined by \cite[p.~150, Equation (6.2.15), (6.2.16)]{NIST}
\begin{align}\label{shichi}
\mathrm{Shi}(z):=\int_0^z\frac{\sinh(t)}{t}\ dt,\hspace{3mm}
\mathrm{Chi}(z):=\gamma+\log(z)+\int_0^z\frac{\cosh(t)-1}{t}\ dt.
\end{align} The first lemma involving these functions was established in \cite[Lemma 9.1]{dkk}. Here, and throughout the sequel, we use the following notation for brevity.
\begin{align*}
			\left(\sinh\operatorname{Shi}-\cosh\operatorname{Chi}\right)(w):=\sinh(w) \operatorname{Shi} (w)- \cosh(w) \operatorname{Chi} (w).    
		\end{align*}
\begin{lemma}\label{minusonelemma}
Let $\mathrm{Re}(w)>0$. 
Then
\begin{equation*}
\int_{0}^{\infty}\frac{t\cos t\, dt}{t^2+w^2}=\left(\sinh\operatorname{Shi}-\cosh\operatorname{Chi}\right)(w).
\end{equation*}
\end{lemma}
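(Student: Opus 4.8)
The plan is to prove the identity
\[
\int_{0}^{\infty}\frac{t\cos t\, dt}{t^2+w^2}=\left(\sinh\operatorname{Shi}-\cosh\operatorname{Chi}\right)(w)
\]
for $\mathrm{Re}(w)>0$ by first establishing it for $w$ a positive real number and then extending by analytic continuation. Both sides are analytic in the right half-plane $\mathrm{Re}(w)>0$: the integral converges there and defines an analytic function (one can differentiate under the integral sign after an integration by parts to kill the slow decay of $\cos t$, or invoke dominated convergence on the integral $\int_0^\infty \frac{t\cos t}{t^2+w^2}\,dt$ which converges conditionally; safest is to write it via a Mellin-Barnes or Laplace representation that makes analyticity transparent), while $\operatorname{Shi}(w)$ and $\operatorname{Chi}(w)$ are analytic for $\mathrm{Re}(w)>0$ (indeed entire and analytic-with-log-branch respectively), so it suffices to verify the formula on $(0,\infty)$.

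To evaluate the integral for $w>0$, the cleanest route is to use the standard decomposition
\[
\frac{t}{t^2+w^2}=\frac12\left(\frac{1}{t-iw}+\frac{1}{t+iw}\right),
\]
write $\cos t=\tfrac12(e^{it}+e^{-it})$, and reduce everything to exponential integrals. Recall the classical evaluations (valid for $\mathrm{Re}(w)>0$, with appropriate principal branches)
\[
\int_0^\infty\frac{\cos t}{t^2+w^2}\,dt=\frac{\pi}{2w}e^{-w},\qquad
\int_0^\infty\frac{t\sin t}{t^2+w^2}\,dt=\frac{\pi}{2}e^{-w},
\]
but these give the ``elementary'' companion integrals; the one with $t\cos t$ in the numerator is the borderline-divergent one and is exactly the one producing $\operatorname{Shi}$ and $\operatorname{Chi}$. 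So instead I would express
\[
\int_0^\infty\frac{t\cos t}{t^2+w^2}\,dt
=\mathrm{Re}\int_0^\infty\frac{t\,e^{it}}{t^2+w^2}\,dt
\]
and compute $\int_0^\infty\frac{t e^{it}}{t^2+w^2}\,dt$ by contour rotation: rotate the ray of integration from the positive real axis to the positive imaginary axis (justified by Jordan-type estimates since $e^{it}$ decays in the upper half-plane and the integrand is $O(1/t)$), picking up no poles since the poles $t=\pm iw$ lie off the first quadrant's boundary for $w>0$ — actually $t=iw$ sits on the imaginary axis, so one must indent around it, and the residue contribution there is precisely what produces the $e^{-w}\,E_1(\cdots)$-type terms. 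After the rotation $t\mapsto is$ the integral becomes $\int_0^\infty \frac{s e^{-s}}{w^2-s^2}\,ds$, whose partial-fraction expansion $\frac{s}{w^2-s^2}=\frac12\left(\frac{1}{w-s}-\frac{1}{w+s}\right)$ turns it into a combination of $\int_0^\infty\frac{e^{-s}}{w\mp s}\,ds$, i.e.\ exponential integral functions $E_1(\pm w)$ (with a principal-value at $s=w$), and these recombine via the standard identities $\operatorname{Chi}(w)\pm\operatorname{Shi}(w)=$ combinations of $\mathrm{Ei}(\pm w)$, or equivalently $\operatorname{Shi}(w)=\tfrac12(\mathrm{Ei}(w)-\mathrm{E_1}(w))$-type relations from \eqref{shichi}, yielding $\sinh(w)\operatorname{Shi}(w)-\cosh(w)\operatorname{Chi}(w)$.

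An alternative — and perhaps more robust — approach avoids contour rotation entirely by differentiating under the integral sign with respect to $w$: setting $g(w)=\int_0^\infty\frac{t\cos t}{t^2+w^2}\,dt$, one finds $g''(w)-g(w)$ (or a similar second-order combination, using $\frac{\partial^2}{\partial w^2}\frac{t}{t^2+w^2}$ together with $\frac{t}{t^2+w^2}=\frac1t-\frac{w^2/t}{t^2+w^2}$) equals an elementary integral like $\int_0^\infty\frac{\cos t}{t}\,dt$-regularized, i.e.\ a constant or a simple function of $w$; then one solves the resulting ODE, whose homogeneous solutions are $\sinh w,\cosh w$ and a particular solution involves $\operatorname{Shi},\operatorname{Chi}$ (since $\frac{d}{dw}\operatorname{Chi}(w)=\frac{\cosh w}{w}$, $\frac{d}{dw}\operatorname{Shi}(w)=\frac{\sinh w}{w}$ make $\sinh\operatorname{Shi}-\cosh\operatorname{Chi}$ satisfy a second-order ODE with forcing $1/w$). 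Matching initial conditions as $w\to\infty$ (where $g(w)\to 0$ and $(\sinh\operatorname{Shi}-\cosh\operatorname{Chi})(w)\to 0$, both like $-1/w$) pins down the constants.

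The main obstacle I anticipate is the careful treatment of the conditionally-convergent integral and the branch/principal-value bookkeeping in whichever method is chosen: in the contour-rotation approach, the indentation around the pole $t=iw$ on the imaginary axis and the Jordan-lemma estimate on the arc at infinity (the integrand decays only like $1/t$, not faster, so one genuinely needs oscillation of $e^{it}$, handled by a standard Jordan-lemma variant) must be done with care; in the ODE approach, justifying differentiation under the integral sign for a non-absolutely-convergent integral (again resolved by an integration by parts before differentiating, turning $\cos t\,dt/t$ into $\sin t$ times a faster-decaying kernel) and then correctly identifying the two integration constants via the asymptotics at $w\to\infty$ is the delicate part. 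Given that the paper already invokes \cite[Lemma 9.1]{dkk} as the source, I expect the authors simply cite that computation; in a self-contained write-up I would present the ODE argument, since it most transparently explains why $\operatorname{Shi}$ and $\operatorname{Chi}$ appear at all.
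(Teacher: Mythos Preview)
The paper does not prove this lemma: it is simply quoted from \cite[Lemma~9.1]{dkk}, and later in the proof of Theorem~\ref{dzhrbashyan} the equivalent form
\[
\int_{0}^{\infty}\frac{t\cos t}{t^{2}+z}\,dt=-\tfrac12\bigl(e^{\sqrt{z}}\,\textup{Ei}(-\sqrt{z})+e^{-\sqrt{z}}\,\textup{Ei}(\sqrt{z})\bigr)
\]
is taken straight from Prudnikov's tables. So there is no argument on the paper's side to compare against; you correctly anticipated this.

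What you have written, however, is a plan rather than a proof, and you say so yourself. Both routes you sketch are sound in principle, but neither is carried far enough to stand as an argument. In the contour-rotation approach you never actually compute the half-residue at $t=iw$ or reassemble the $\textup{Ei}$-pieces into $\sinh\operatorname{Shi}-\cosh\operatorname{Chi}$. In the ODE approach you write ``$g''(w)-g(w)$ (or a similar second-order combination)'' and leave the forcing term unidentified; for the record the correct equation is $h''-h=1/w^{2}$, which $h(w)=\sinh(w)\operatorname{Shi}(w)-\cosh(w)\operatorname{Chi}(w)$ satisfies by a two-line check, while showing that $g$ obeys the same ODE still needs a regularisation step (the formal $g''-g$ involves the divergent $\int_{0}^{\infty}u\cos(wu)\,du$). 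If you want the shortest genuinely self-contained proof, take the tabulated $\textup{Ei}$-form above and convert via $\operatorname{Chi}(w)\pm\operatorname{Shi}(w)=\textup{Ei}(\pm w)$, which gives
\[
-\tfrac12\bigl(e^{w}\textup{Ei}(-w)+e^{-w}\textup{Ei}(w)\bigr)=\sinh(w)\operatorname{Shi}(w)-\cosh(w)\operatorname{Chi}(w)
\]
in one line; analytic continuation to $\mathrm{Re}(w)>0$ is then immediate, exactly as you say.
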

We require another result from \cite[Lemma 3.2]{DGKM}.
\begin{lemma}\label{sumpsi}
For \textup{Re}$(w)>0$,
\begin{align*}
\sum_{k=0}^\infty\frac{\psi(2k+1)}{\Gamma\left(2k+1\right)}w^{2k}&=\left(\sinh\operatorname{Shi}-\cosh\operatorname{Chi}\right)(w)+\log(w)\cosh(w).
\end{align*}
\end{lemma}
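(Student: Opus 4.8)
The plan is to reduce the identity to a generating-function evaluation for harmonic numbers. Writing $\psi(2k+1)=-\gamma+H_{2k}$, where $H_{n}=\sum_{j=1}^{n}\frac1j$ (with $H_{0}=0$), and using $\sum_{k\ge0}w^{2k}/(2k)!=\cosh(w)$, the left-hand side becomes
\begin{align*}
\sum_{k=0}^{\infty}\frac{\psi(2k+1)}{\Gamma(2k+1)}w^{2k}=-\gamma\cosh(w)+\sum_{k=0}^{\infty}\frac{H_{2k}}{(2k)!}w^{2k},
\end{align*}
so everything comes down to the even part of the exponential generating function of $H_{n}$.

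First I would record the auxiliary identity
\begin{align*}
\sum_{n=0}^{\infty}\frac{H_{n}}{n!}x^{n}=e^{x}\int_{0}^{x}\frac{1-e^{-u}}{u}\,du,
\end{align*}
proved by substituting $H_{n}=\int_{0}^{1}\frac{1-t^{n}}{1-t}\,dt$, interchanging the (locally uniformly convergent) sum with the integral, summing the exponential series to get $\int_{0}^{1}\frac{e^{x}-e^{xt}}{1-t}\,dt$, and changing variables via $u=x(1-t)$. Taking the even part and applying $u\mapsto-u$ in the term coming from $x=-w$ yields
\begin{align*}
\sum_{k=0}^{\infty}\frac{H_{2k}}{(2k)!}w^{2k}=\frac12\left\{e^{w}\int_{0}^{w}\frac{1-e^{-u}}{u}\,du+e^{-w}\int_{0}^{w}\frac{1-e^{u}}{u}\,du\right\}.
\end{align*}

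Next I would evaluate the two inner integrals in closed form. Splitting $1-e^{\mp u}=(1-\cosh u)\pm\sinh u$ and comparing with the definitions \eqref{shichi} of $\operatorname{Shi}$ and $\operatorname{Chi}$ gives
\begin{align*}
\int_{0}^{w}\frac{1-e^{-u}}{u}\,du=\operatorname{Shi}(w)-\operatorname{Chi}(w)+\gamma+\log(w),\qquad
\int_{0}^{w}\frac{1-e^{u}}{u}\,du=-\operatorname{Shi}(w)-\operatorname{Chi}(w)+\gamma+\log(w).
\end{align*}
Substituting back and regrouping the $e^{\pm w}$ factors into $\sinh(w)$ and $\cosh(w)$ produces
\begin{align*}
\sum_{k=0}^{\infty}\frac{H_{2k}}{(2k)!}w^{2k}=\left(\sinh\operatorname{Shi}-\cosh\operatorname{Chi}\right)(w)+\gamma\cosh(w)+\log(w)\cosh(w),
\end{align*}
and adding the $-\gamma\cosh(w)$ from the first display cancels the spurious $\gamma\cosh(w)$ and gives the claim. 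Since both sides are analytic on $\textup{Re}(w)>0$ (in fact entire, the $\log$'s cancelling), it suffices to argue this for $w>0$ and then invoke analytic continuation.

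I do not anticipate a genuine obstacle: the only delicate points are the sum–integral interchanges, which are routine because $H_{n}/n!$ decays super-exponentially, and keeping the branch of $\log$ and the segment of integration in $\int_{0}^{w}$ consistent once $w$ is complex. As an alternative one could start from the Hankel representation $\psi(z)/\Gamma(z)=\frac{1}{2\pi i}\int_{\mathcal H}e^{u}u^{-z}\log u\,du$, which turns the series into $\frac{1}{2\pi i}\int_{\mathcal H}\frac{u e^{u}\log u}{u^{2}-w^{2}}\,du$, and then evaluate that contour integral by collapsing onto the branch cut; this route connects directly with Lemma \ref{minusonelemma} but is less self-contained than the harmonic-number computation above.
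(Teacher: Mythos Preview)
Your argument is correct. The decomposition $\psi(2k+1)=-\gamma+H_{2k}$, the exponential generating function $\sum_{n\ge0}H_n x^n/n!=e^{x}\int_0^{x}(1-e^{-u})u^{-1}\,du$, the extraction of the even part, and the identification of the two integrals with $\pm\operatorname{Shi}(w)-\operatorname{Chi}(w)+\gamma+\log(w)$ all check out line by line, and the final cancellation of $\gamma\cosh(w)$ is exactly right. Your remark that both sides are actually entire (the $\log(w)$ hidden in $\operatorname{Chi}(w)$ cancelling the explicit $\log(w)\cosh(w)$) is also correct and makes the analytic-continuation step immediate.

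As for comparison with the paper: there is nothing to compare, because the paper does not prove this lemma at all. It is imported verbatim as \cite[Lemma~3.2]{DGKM} and used as a black box. Your harmonic-number computation is therefore a genuinely self-contained addition; it is elementary, avoids any contour integration, and would make the present paper independent of \cite{DGKM} at this particular point. The alternative Hankel-integral route you sketch at the end is closer in spirit to how such identities are often derived in that literature (and links naturally to Lemma~\ref{minusonelemma}), but your primary argument is cleaner for this specific statement.
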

We are now ready to prove our result on evaluating the second derivative of the Mittag-Leffler function $E_{2,b}(z)$ with respect to the parameter $b$ at $b=1$.
\begin{theorem}\label{dzhrbashyan}
Let $b>0$ and $\textup{Re}(w)>0$. Let the two-variable Mittag-Leffler function $E_{2,b}(z)$ be defined in \eqref{2varmldef}. Then
\begin{align}\label{dzhrbashyan_eqn}
\left.\frac{\partial^2}{\partial b^2}E_{2, b}(w^2)\right|_{b=1}=\log^{2}(w)\cosh(w)+2\int_{0}^{\infty}\frac{u\cos(u)\log(u)\, du}{u^2+w^2}.
\end{align}
\end{theorem}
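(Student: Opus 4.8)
The plan is to start from the defining series \eqref{relation} specialized to $\alpha=2$, $\beta=1$, namely
\[
\left.\frac{\partial^2}{\partial b^2}E_{2, b}(w^2)\right|_{b=1}=\sum_{k=0}^{\infty}\frac{\psi^{2}(2k+1)-\psi'(2k+1)}{\G(2k+1)}w^{2k},
\]
and to recognize each of the two pieces $\sum_k \psi^2(2k+1)w^{2k}/\G(2k+1)$ and $\sum_k \psi'(2k+1)w^{2k}/\G(2k+1)$ as something we can get a handle on. For the $\psi'$ part, the cleanest route is to differentiate Lemma \ref{sumpsi} with respect to a parameter: writing the general term via $\G(s)$ and differentiating the Mellin/series identity, or — more elementarily — noting that $\psi'(2k+1)=\sum_{j\ge 1} (j+2k)^{-2}$ does not telescope nicely, so instead I would differentiate the identity of Lemma \ref{sumpsi} in the spirit of the $k$-analogue: treat $\sum_k \psi(2k+1)w^{2k}/\G(2k+1)$ as the $b$-derivative at $b=1$ of $E_{2,b}(w^2)$ (since $\frac{\partial}{\partial b}E_{2,b}=-\sum \psi(2k+b)w^{2k}/\G(2k+b)$, up to sign), and then take one more $b$-derivative. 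Concretely, $\frac{\partial^2}{\partial b^2}E_{2,b}(w^2)\big|_{b=1}$ equals $\sum_k (\psi^2-\psi')(2k+1)w^{2k}/\G(2k+1)$, so it suffices to independently evaluate $S_1:=\sum_k \psi(2k+1)w^{2k}/\G(2k+1)$ (done in Lemma \ref{sumpsi}) and $S_2:=\sum_k \psi^2(2k+1)w^{2k}/\G(2k+1)$, and the theorem reduces to computing $S_2-$(the $\psi'$ sum), which is exactly $\frac{\partial^2}{\partial b^2}E_{2,b}(w^2)|_{b=1}$.

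Rather than fighting that circularity, the better approach is the Mellin–Barnes one, parallel to the proof of Theorem \ref{analoguedgkm}. I would write, for a vertical line with $0<\re(s)<1$,
\[
\left.\frac{\partial^2}{\partial b^2}E_{2, b}(w^2)\right|_{b=1}
= \sum_{k=0}^{\infty}\frac{w^{2k}}{2\pi i}\int_{(c)} (\text{kernel})(s)\,(\text{something})^{-s}\,ds,
\]
using that $1/\G(2k+1)$ and the factor $\psi^2-\psi'$ arising from two $b$-derivatives of $1/\G(2k+b)$ can be encoded through $\frac{d^2}{ds^2}\frac{1}{\G(s)}$ evaluated appropriately, together with the reflection formula $\G(s)\G(1-s)=\pi/\sin\pi s$. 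The key analytic input is the pair of Mellin transforms already quoted in the proof of Theorem \ref{analoguedgkm}: $\int_0^\infty u^{s-1}\cos(u)\log(u)\,du=\G(s)\big(\psi(s)\cos(\tfrac{\pi s}{2})-\tfrac{\pi}{2}\sin(\tfrac{\pi s}{2})\big)$ and $\int_0^\infty u^{s-1}\frac{u}{u^2+w^2}\,du=\tfrac{\pi}{2}w^{s-1}\sec(\tfrac{\pi s}{2})$, whose Parseval combination produces precisely $\int_0^\infty \frac{u\cos(u)\log(u)}{u^2+w^2}\,du$ plus an elementary $e^{-w}$-type term. I would also use Lemma \ref{minusonelemma} and Lemma \ref{sumpsi} to identify the remaining hyperbolic pieces, and the elementary generating function $\sum_k w^{2k}/\G(2k+1)=\cosh(w)$ together with $\sum_k \psi(2k+1)w^{2k}/\G(2k+1)$ from Lemma \ref{sumpsi} to account for the $\log^2(w)\cosh(w)$ and any $\log(w)\cosh(w)$ cross terms.

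So the ordered steps are: (i) express $\frac{\partial^2}{\partial b^2}E_{2,b}(w^2)|_{b=1}$ as the series $\sum_k (\psi^2-\psi')(2k+1)w^{2k}/\G(2k+1)$; (ii) convert the reciprocal Gamma and the parameter derivatives into a Mellin–Barnes integral, using $\G(s)\G(1-s)=\pi/\sin\pi s$ and differentiation under the integral sign, so that the $k$-sum becomes a zeta function and, after interchanging sum and integral (justified by absolute convergence via Stirling \eqref{strivert} and elementary zeta bounds), one gets a single contour integral; (iii) shift the contour and split into the pieces matching the two Mellin transforms above, applying Parseval \eqref{parseval-1} to each; (iv) collect the resulting closed forms using Lemmas \ref{minusonelemma} and \ref{sumpsi} and the generating function $\cosh(w)$, and watch the elementary terms (the $\pi/2$-type and $e^{-w}$-type contributions) cancel, leaving exactly $\log^2(w)\cosh(w)+2\int_0^\infty \frac{u\cos(u)\log(u)}{u^2+w^2}\,du$; (v) extend from $w>0$ to $\re(w)>0$ by analytic continuation, noting both sides are analytic there. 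I expect the main obstacle to be step (ii)–(iii): correctly bookkeeping the \emph{two} $b$-derivatives — which introduce a $\psi^2-\psi'$ that must be matched against $\frac{d^2}{ds^2}$ of the Gamma/cosec kernel — and making sure the contour shift past $s=1$ picks up (or avoids) the right residue, exactly as in the delicate shift argument around \eqref{before_splitting}. Tracking the $\log(w)$ and $\log^2(w)$ factors that spill out of $(2\pi w)^{-s}$ and the differentiated kernel, and confirming the spurious elementary terms cancel, is where the care is needed; everything else is a routine repackaging of the machinery already used for Theorem \ref{analoguedgkm}.
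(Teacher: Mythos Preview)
Your plan does not match the paper's proof, and as written it has a genuine gap.

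The paper does \emph{not} attack the series $\sum_k(\psi^2-\psi')(2k+1)w^{2k}/\Gamma(2k+1)$ via Mellin--Barnes. Instead it imports an explicit closed form for $E_{2,b}(z)$ itself, due to Dzhrbashyan: for $0\le\arg(z)<\pi$ (and a companion formula for the lower half),
\[
E_{2,b}(z)=\tfrac12 z^{\frac{1-b}{2}}\Bigl(e^{\sqrt{z}}+e^{\mp i\pi(1-b)-\sqrt{z}}\Bigr)+\frac{\sin(\pi b)}{2\pi}\int_{0}^{\infty}\frac{e^{\pm i(\sqrt{t}-\frac{\pi}{2}(1-b))}}{t+z}\,t^{\frac{1-b}{2}}\,dt.
\]
One differentiates this twice in $b$, sets $b=1$, and is left with $\tfrac14\log^2(z)\cosh(\sqrt z)$ plus elementary $e^{-\sqrt z}$ terms and a single integral $\int_0^\infty e^{i\sqrt t}(i\pi-\log t)/(t+z)\,dt$. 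The latter is reduced, after $t=u^2$, to $\int_0^\infty \frac{u\cos(u)\log u}{u^2+z}\,du$ and companion integrals evaluated by Lemma~\ref{minusonelemma}, \eqref{plus}, and tables; all auxiliary pieces cancel. Setting $z=w^2$ gives the theorem on $\re(w)>0$. No zeta function, no contour shifts, no Parseval are used here.

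The concrete gap in your sketch is step~(ii): the claim that ``the $k$-sum becomes a zeta function'' is borrowed from the proof of Theorem~\ref{analoguedgkm}, but there the $\zeta(s)$ arises from an \emph{outer} sum over $m\ge1$ via $\sum_m (2\pi m w)^{-s}=\zeta(s)(2\pi w)^{-s}$. In the present theorem there is no such $m$-sum; $w$ is fixed, and the $k$-sum is the power-series index, which you cannot collapse to $\zeta(s)$. Without that, your proposed Parseval/contour-shift pipeline has nothing to act on, and the bookkeeping you flag in (ii)--(iii) never gets off the ground. A Mellin--Barnes proof may well exist (e.g., starting from the standard contour integral for $E_{\alpha,\beta}$ and differentiating in $\beta$), but it is not the argument you outlined, and it would require substantially new input beyond the lemmas you cite.
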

\begin{proof}
To prove this result, we first  apply a result of Dzhrbashyan \cite[p.~130, Equation (2.12)]{dzhrbashyan} that will be used in the proof. It states that for $0\leq\arg(z)<\pi$ or $-\pi<\arg(z)\leq0$, one has\footnote{It is to be noted that Dzhrbashyan uses a different notation for the two-variable Mittag-Leffler function in his book, namely, $E_{\rho}(z;b)=\sum_{k=0}^{\infty}z^k/\G(b+k\rho^{-1})$. See \cite[p.~117]{dzhrbashyan}. We have used the contemporary notation here.}
\begin{align}\label{eqn_Dzhrbashyan}
E_{2,b}(z)=\frac{1}{2}z^{\frac{1}{2}(1-b)}\left\{e^{\sqrt{z}}+e^{\mp i\pi(1-b)-\sqrt{z}}\right\}+\frac{\sin(\pi b)}{2\pi}\int_{0}^{\infty}\frac{e^{\pm i\left(\sqrt{t}-\frac{\pi}{2}(1-b)\right)}}{t+z}t^{\frac{1}{2}(1-b)}\, dt,
\end{align}
where the upper or lower signs are taken, respectively, for $0\leq\arg(z)<\pi$ or $-\pi<\arg(z)\leq0$. 

We first assume $0\leq\arg(z)<\pi$. Differentiating \eqref{eqn_Dzhrbashyan} twice with respect to $b$ and simplifying leads to
\begin{align*}
\frac{\partial^2}{\partial b^2}E_{2, b}(z)&=\frac{1}{2}\Bigg[-\frac{1}{2}\log(z)\left\{-\frac{1}{2}z^{\frac{1}{2}(1-b)}\log(z)\left(e^{\sqrt{z}}+e^{-i\pi(1-b)-\sqrt{z}}\right)+i\pi z^{\frac{1}{2}(1-b)}e^{i\pi(b-1)-\sqrt{z}}\right\}\nonumber\\
&\quad+i\pi\sqrt{z}e^{-\sqrt{z}}z^{-\frac{b}{2}}e^{i\pi(b-1)}\left(i\pi-\frac{1}{2}\log(z)\right)\Bigg]-\frac{\pi\sin(\pi b)}{2}\int_{0}^{\infty}\frac{e^{i\left(\sqrt{t}-\frac{\pi}{2}(1-b)\right)}}{t+z}t^{\frac{1}{2}(1-b)}\, dt\nonumber\\
&\quad+\frac{1}{2}\cos(\pi b)\int_{0}^{\infty}\frac{e^{i\left(\sqrt{t}-\frac{\pi}{2}(1-b)\right)}}{t+z}t^{\frac{1}{2}(1-b)}(i\pi-\log(t))\, dt\nonumber\\
&\quad+\frac{1}{8\pi}\sin(\pi b)\int_{0}^{\infty}\frac{e^{i\left(\sqrt{t}-\frac{\pi}{2}(1-b)\right)}}{t+z}t^{\frac{1}{2}(1-b)}(i\pi-\log(t))^{2}\, dt,
\end{align*}
so that
\begin{align}\label{extm0}
\left.\frac{\partial^2}{\partial b^2}E_{2, b}(z)\right|_{b=1}=\frac{1}{4}\log^{2}(z)\cosh(\sqrt{z})-\frac{i\pi}{2}e^{-\sqrt{z}}\log(z)-\frac{\pi^2}{2}e^{-\sqrt{z}}-\frac{1}{2}\int_{0}^{\infty}\frac{e^{i\sqrt{t}}(i\pi-\log(t))}{t+z}\, dt.
\end{align}
Now employing the change of variable $t=u^2$, we have
\begin{align}\label{aftchange}
\int_{0}^{\infty}\frac{e^{i\sqrt{t}}(i\pi-\log(t))}{t+z}\, dt&=2\int_{0}^{\infty}\frac{ue^{iu}(i\pi-2\log(u))}{u^2+z}\, du\nonumber\\
&=2\pi i\int_{0}^{\infty}\frac{u\cos(u)}{u^2+z}\, du-2\pi\int_{0}^{\infty}\frac{u\sin(u)}{u^2+z}\, du-4\int_{0}^{\infty}\frac{ue^{iu}\log(u)}{u^2+z}\, du.
\end{align}
Next, from Lemma \ref{minusonelemma},
\begin{align}\label{minus}
\int_{0}^{\infty}\frac{u\cos(u)}{u^2+z}\, du=\left(\sinh\operatorname{Shi}-\cosh\operatorname{Chi}\right)(\sqrt{z}),
\end{align}
whereas from \cite[p.~65, Formula (15)]{batemanmp},
\begin{equation}\label{plus}
\int_{0}^{\infty}\frac{u\sin(u)}{u^2+z}\, du=\frac{\pi}{2}e^{-\sqrt{z}}.
\end{equation}
Moreover, 
\begin{align}\label{intm}
\int_{0}^{\infty}\frac{ue^{iu}\log(u)}{u^2+z}\, du&=\int_{0}^{\infty}\frac{u\cos(u)\log(u)}{u^2+z}\, du+i\int_{0}^{\infty}\frac{u\sin(u)\log(u)}{u^2+z}\, du,
\end{align}
Now from \cite[p.~537, Formula \textbf{2.6.32.8}]{prudnikov1}\footnote{The formula given in the book has a typo. We have corrected it here.},
\begin{align}\label{usinlog}
\int_{0}^{\infty}\frac{u\sin(u)\log(u)}{u^2+z}\, du=\frac{\pi}{2}\left\{e^{-\sqrt{z}}\log(\sqrt{z})-\frac{1}{2}\left(e^{\sqrt{z}}\textup{Ei}(-\sqrt{z})+e^{-\sqrt{z}}\textup{Ei}(\sqrt{z})\right)\right\},
\end{align}
where $\textup{Ei}(x)$ is the exponential integral given in  \cite[p.~788]{Prudnikov} for $x>0$ by $\textup{Ei}(x):=\int_{-\infty}^{x}e^{t}/t\, dt,$ or, as is seen from \cite[p.~1]{je}, by $\textup{Ei}(-x):=-\int_{x}^{\infty}e^{-t}/t\, dt$. However, from \cite[p.~395, Formula \textbf{2.5.9.12}]{prudnikov1},
\begin{equation}\label{one}
\int_{0}^{\infty}\frac{t\cos t\, dt}{t^2+z}=-\frac{1}{2}\left(e^{\sqrt{z}}\textup{Ei}(-\sqrt{z})+e^{-\sqrt{z}}\textup{Ei}(\sqrt{z})\right).
\end{equation}
Hence along with Lemma \ref{minusonelemma}, \eqref{usinlog} and \eqref{one} imply
\begin{align}\label{usinlog1}
\int_{0}^{\infty}\frac{u\sin(u)\log(u)}{u^2+z}\, du=\frac{\pi}{2}\left\{e^{-\sqrt{z}}\log(\sqrt{z})+\left(\sinh\operatorname{Shi}-\cosh\operatorname{Chi}\right)(\sqrt{z})\right\}.
\end{align}
Therefore, from \eqref{aftchange}, \eqref{minus}, \eqref{plus}, \eqref{intm} and \eqref{usinlog1}, we arrive at
\begin{align}\label{extm}
\int_{0}^{\infty}\frac{e^{i\sqrt{t}}(i\pi-\log(t))}{t+z}\, dt&=-\pi^2e^{-\sqrt{z}}-2\pi i e^{-\sqrt{z}}\log(\sqrt{z})-4\int_{0}^{\infty}\frac{u\cos(u)\log(u)}{u^2+z}\, du.
\end{align}  
Finally, substituting \eqref{extm} in \eqref{extm0}, we arrive at 
\begin{align*}
\left.\frac{\partial^2}{\partial b^2}E_{2, b}(z)\right|_{b=1}=\log^2(\sqrt{z})\cosh(\sqrt{z})+2\int_{0}^{\infty}\frac{u\cos(u)\log(u)}{u^2+z}\, du.
\end{align*}
Now let $z=w^2$ to arrive at \eqref{dzhrbashyan_eqn} for $0\leq\arg(w)<\frac{\pi}{2}$. Simiarly starting with $-\pi<\arg(z)\leq0$ and proceeding exactly as above and putting $z=w^2$ at the end, we again obtain \eqref{dzhrbashyan_eqn} for $-\frac{\pi}{2}<\arg(w)\leq0$. Hence we see that \eqref{dzhrbashyan_eqn} actually holds for Re$(w)>0$.
\end{proof}

\section{Proof of the main results}
This section is devoted to proving the transformation for the Lambert series for logarithm given in Theorem \ref{loglamb}, the asymptotic expansion of this series in Theorem \ref{logy0}, and a mean value theorem in Theorem \ref{moments}. 

\subsection{Proof of Theorem \ref{loglamb}}
The idea is to differentiate both sides of \eqref{maineqn} with respect to $a$ and then let $a\to0$. Define
\begin{align}
F_1(a,y)&:=\frac{d}{da}\sum_{n=1}^{\infty}\sigma_a(n)e^{-ny},\nonumber\\
F_2(a, y)&:=\frac{d}{da}\left\{\frac{1}{2}\left(\left(\frac{2\pi}{y}\right)^{1+a}\mathrm{cosec}\left(\frac{\pi a}{2}\right)+1\right)\zeta(-a)-\frac{1}{y}\zeta(1-a)\right\},\nonumber\\
F_3(a, y)&:=\frac{d}{da}\left\{\frac{2\pi}{y\sin\left(\frac{\pi a}{2}\right)}\sum_{n=1}^\infty \sigma_{a}(n)\Bigg(\tfrac{(2\pi n)^{-a}}{\Gamma(1-a)} {}_1F_2\left(1;\tfrac{1-a}{2},1-\tfrac{a}{2};\tfrac{4\pi^4n^2}{y^2} \right) -\left(\tfrac{2\pi}{y}\right)^{a}\hspace{-5pt}\cosh\left(\tfrac{4\pi^2n}{y}\right)\Bigg)\right\}\label{f3ay},
\end{align}
and let
\begin{align}\label{g1g2g3}
G_1(y)=\lim_{a\to0}F_1(a, y),\hspace{5mm}G_2(y)=\lim_{a\to0}F_2(a, y),\hspace{5mm}G_3(y)=\lim_{a\to0}F_3(a, y).
\end{align}
Clearly, from \eqref{maineqn},
\begin{align}\label{diffid}
G_1(y)+G_2(y)=G_3(y).
\end{align}
Using \eqref{equivalentlambert}, it is readily seen that
\begin{equation}\label{g1eval}
G_1(y)=\sum_{n=1}^{\infty}\frac{\log(n)}{e^{ny}-1}.
\end{equation}
By routine differentiation, 
\begin{align}\label{f2ybeflimit}
F_2(a, y)&=\frac{1}{y}\zeta'(1-a)-\frac{1}{2}\zeta'(-a)-\frac{1}{2}\left(\frac{2\pi}{y}\right)^{1+a}\mathrm{cosec}\left(\frac{\pi a}{2}\right)\zeta'(-a)\nonumber\\
&\quad+\frac{1}{2}\zeta(-a)\left(\frac{2\pi}{y}\right)^{1+a}\mathrm{cosec}\left(\frac{\pi a}{2}\right)\left(\log\left(\frac{2\pi}{y}\right)-\frac{\pi}{2}\cot\left(\frac{\pi a}{2}\right)\right).
\end{align}
Letting $a\to0$ in \eqref{f2ybeflimit} leads to
\begin{align}\label{g2eval}
G_2(y)=\frac{1}{4}\log(2\pi)-\frac{1}{2y}\log^{2}(y)+\frac{\gamma^2}{2y}-\frac{\pi^2}{12y},
\end{align}
which is now proved. Note that
\begin{align}\label{simplim}
G_2(y) &=\lim_{a\to 0} \left(-\frac{1}{2}\zeta'(-a)\right) \nonumber\\
&\quad+ \lim_{a\to 0} \left[\frac{1}{y}\zeta'(1-a)-\frac{\left(2\pi/y\right)^{1+a}}{2\sin\left(\frac{\pi a}{2}\right)}\left\{ \zeta'(-a)-\zeta(-a)\log\left(\frac{2\pi}{y}\right) + \frac{\pi}{2}\zeta(-a)\cot\left(\frac{\pi a}{2}\right)\right\} \right].
\end{align}
We make use of the following well-known Laurent series expansions around $a=0$:
\begin{align}\label{expansions}
\zeta(-a) &= -\frac{1}{2} + \frac{1}{2} \log(2\pi)a + \left[\frac{\gamma^2}{4}-\frac{\pi^2}{48}- \frac{\log^2(2\pi)}{4}+ \frac{\gamma_1}{2} \right]a^2 + O(a^3),\nonumber\\
\zeta'(1-a)&= -\frac{1}{a^2}-\gamma_1-\gamma_2 a+ O(a^2),\nonumber\\
\left(\frac{2\pi}{y}\right)^{1+a}&= \frac{2\pi}{y}+\frac{2\pi}{y}\log\left(\frac{2\pi}{y}\right) a + \frac{\pi}{y} \log^2 \left(\frac{2\pi}{y}\right) a^2 + O(a^3),\nonumber\\
\textup{cosec} \left(\frac{\pi a}{2}\right) &= \frac{2}{\pi a}+ \frac{\pi}{12}a +   O(a^3),\nonumber\\
\cot \left(\frac{\pi a}{2}\right) &= \frac{2}{\pi a}- \frac{\pi}{6}a +   O(a^3).
\end{align}
Hence substituting \eqref{expansions} in \eqref{simplim}, we get
\begin{align*}
G_2(y)&=\tfrac{1}{4}\log(2\pi) + \lim_{a\to 0}\Big[-\tfrac{1}{ya^2}-\tfrac{\gamma_1}{y} + O(a)+ \left\{-\tfrac{2}{ya} -\tfrac{2}{y}\log\left(\tfrac{2\pi}{y}\right)+\left(-\tfrac{\pi^2}{12y}-\tfrac{1}{y}\log^2 \left(\tfrac{2\pi}{y}\right) \right)a + O(a^2) \right\}\nonumber\\
&\qquad \qquad \times \left\{-\tfrac{1}{2a}+ \tfrac{1}{2}\log\left(\tfrac{2\pi}{y}\right) + \left(-\tfrac{\gamma^2}{4}+\tfrac{\pi^2}{16}-\tfrac{\gamma_1}{2}+\tfrac{1}{4}\log^2(2\pi)-\tfrac{1}{2}\log(2\pi)\log\left(\tfrac{2\pi}{y}\right) \right)a + O(a^2) \right\} \Big]\nonumber\\
&= \frac{1}{4}\log(2\pi) + \lim_{a\to 0}\Bigg[\left(-\frac{1}{y}+ \frac{1}{y} \right)\frac{1}{a^2} + \left(-\frac{1}{y}\log\left(\frac{2\pi}{y}\right) + \frac{1}{y}\log\left(\frac{2\pi}{y}\right)\right)\frac{1}{a} \nonumber\\
&\qquad + \left(-\frac{\gamma_1}{y}+ \frac{\gamma^2}{2y}-\frac{\pi^2}{12y}+\frac{\gamma_1}{y}-\frac{1}{2y}\left\{\log^2(2\pi)-2\log(2\pi)\log\left(\frac{2\pi}{y}\right)+ \log^2\left(\frac{2\pi}{y}\right) \right\}\right)+ O(a) \Bigg]\nonumber\\
&= \frac{1}{4}\log(2\pi) + \frac{\gamma^2}{2y} -\frac{\pi^2}{12y} -\frac{\log^2y}{2y}.
\end{align*}
Next, we apply the definitions of $\sigma_a(n)$, ${}_1F_{2}$ and simplify to obtain
\begin{align*}
F_3(a, y)&=\frac{d}{da}\left[\frac{2\pi}{y\sin\left(\frac{\pi a}{2}\right)}\sum_{m, n=1}^\infty\left\{(2\pi n)^{-a}\sum_{k=0}^{\infty}\frac{\left(4\pi^2mn/y\right)^{2k}}{\G(1-a+2k)}-\left(\frac{2\pi m}{y}\right)^{a}\cosh\left(\frac{4\pi^2mn}{y}\right)\right\}\right]\notag\\
&=\frac{2\pi}{y}\sum_{m, n=1}^{\infty}\sum_{k=0}^{\infty}\left(\frac{4\pi^2mn}{y}\right)^{2k}\frac{1}{\sin^{2}\left(\frac{\pi a}{2}\right)}\Bigg[\frac{(2\pi n)^{-a}}{\G(1-a+2k)}\bigg\{\left(\psi(1-a+2k)-\log(2\pi n)\right)\sin\left(\frac{\pi a}{2}\right)\nonumber\\
&\qquad\quad-\frac{\pi}{2}\cos\left(\frac{\pi a}{2}\right)\bigg\}-\frac{1}{\G(2k+1)}\left(\frac{2\pi m}{y}\right)^{a}\left(\log\left(\frac{2\pi m}{y}\right)\sin\left(\frac{\pi a}{2}\right)-\frac{\pi}{2}\cos\left(\frac{\pi a}{2}\right)\right)\Bigg]\nonumber\\
&=\frac{2\pi}{y}\sum_{m, n=1}^{\infty}\sum_{k=0}^{\infty}\left(\frac{4\pi^2mn}{y}\right)^{2k}\frac{A_1(a)-A_2(a)}{\sin^{2}\left(\frac{\pi a}{2}\right)},
\end{align*}
where in the first step we have interchanged the order of summation and differentiation using the fact \cite[p.~35]{dkk} that the series in the definition of  \eqref{f3ay} converges uniformly as long as Re$(a)>-1$, and where
\begin{align*}
\hspace{-3mm}A_1(a)&=A_1(m, n, k; a):=\frac{(2\pi n)^{-a}}{\G(1-a+2k)}\left\{\left(\psi(1-a+2k)-\log(2\pi n)\right)\sin\left(\frac{\pi a}{2}\right)-\frac{\pi}{2}\cos\left(\frac{\pi a}{2}\right)\right\},\\
\hspace{-3mm}A_2(a)&=A_2(m, n, k; a):=\frac{1}{\G(2k+1)}\left(\frac{2\pi m}{y}\right)^{a}\left\{\log\left(\frac{2\pi m}{y}\right)\sin\left(\frac{\pi a}{2}\right)-\frac{\pi}{2}\cos\left(\frac{\pi a}{2}\right)\right\}.
\end{align*}
We now show that
\begin{align}\label{comp_limit}
\lim_{a\to0}\frac{A_1(a)-A_2(a)}{\sin^{2}\left(\frac{\pi a}{2}\right)}
&=\frac{1}{\pi\G(2k+1)}\bigg\{\log\left(\frac{4\pi^2mn}{y}\right)\log\left(\frac{ny}{m}\right)-2\log(2n\pi)\psi(2k+1)+\psi^{2}(2k+1)\nonumber\\
&\qquad\qquad\qquad\quad-\psi'(2k+1)\bigg\}.
\end{align}
Let $L$ denote the limit in the equation given above. The expression inside the limit is of the form $0/0$. By routine calculation, we find that
\begin{align}
A_1'(a)&=\frac{(2\pi n)^{-a}}{\G(1-a+2k)}\left\{-\psi'(1-a+2k)+\frac{\pi^2}{4}+\left(\psi(1-a+2k)-\log(2\pi n)\right)^{2}\right\}\sin\left(\frac{\pi a}{2}\right),\label{a1d1}\\
A_2'(a)&=\frac{\left(2\pi m/y\right)^{a}}{\G(2k+1)}\bigg\{\log\left(\frac{2\pi m}{y}\right)\left(\log\left(\frac{2\pi m}{y}\right)\sin\left(\frac{\pi a}{2}\right)-\frac{\pi}{2}\cos\left(\frac{\pi a}{2}\right)\right)\nonumber\\
&\qquad\qquad\qquad+\frac{\pi}{2}\log\left(\frac{2\pi m}{y}\right)\cos\left(\frac{\pi a}{2}\right)+\frac{\pi^2}{4}\sin\left(\frac{\pi a}{2}\right)\bigg\},\label{a2d1}
\end{align}
where $'$ denotes differentiation with respect to $a$.
Since $A_1'(0)=A_2'(0)=0$, using L'Hopital's rule again, it is seen that
\begin{align*}
L=\frac{A_1''(0)-A_2''(0)}{\pi^2/2},
\end{align*}
Differentiating \eqref{a1d1} and \eqref{a2d1} with respect to $a$, we get
\begin{align*}
A_1''(0)&=\frac{\pi}{2\G(2k+1)}\left\{-\psi'(2k+1)+\frac{\pi^2}{4}+\left(\psi(2k+1)-\log(2\pi n)\right)^{2}\right\},\\
A_2''(0)&=\frac{1}{\G(2k+1)}\left\{\frac{\pi}{2}\log^{2}\left(\frac{2\pi m}{y}\right)+\frac{\pi^3}{8}\right\},
\end{align*}
whence we obtain \eqref{comp_limit} upon simplification. Therefore, from \eqref{g1g2g3} and \eqref{comp_limit}, we deduce that
\begin{align*}
G_3(y)&=\frac{2}{y}\sum_{m, n=1}^{\infty}\sum_{k=0}^{\infty}\frac{\left(4\pi^2mn/y\right)^{2k}}{(2k)!}\bigg\{\log\left(\frac{4\pi^2mn}{y}\right)\log\left(\frac{ny}{m}\right)-2\log(2n\pi)\psi(2k+1)\nonumber\\
&\qquad\qquad\qquad\qquad\qquad\qquad+\psi^{2}(2k+1)-\psi'(2k+1)\bigg\}\nonumber\\
&=\frac{2}{y}\sum_{m, n=1}^{\infty}\Bigg\{\log\left(\frac{4\pi^2mn}{y}\right)\log\left(\frac{ny}{m}\right)\cosh\left(\frac{4\pi^2mn}{y}\right)-2\log(2n\pi)\sum_{k=0}^{\infty}\frac{\psi(2k+1)}{\G(2k+1)}\left(\frac{4\pi^2mn}{y}\right)^{2k}\nonumber\\
&\qquad\qquad\quad+\sum_{k=0}^{\infty}\frac{\psi^{2}(2k+1)-\psi'(2k+1)}{\G(2k+1)}\left(\frac{4\pi^2mn}{y}\right)^{2k}\Bigg\}.
\end{align*}
Therefore, invoking Lemma \ref{sumpsi}, using \eqref{relation} and Theorem \ref{dzhrbashyan} with $w=4\pi^2mn/y$, we are led to
\begin{align*}
G_3(y)&=\frac{2}{y}\sum_{m, n=1}^{\infty}\hspace{-4pt}\bigg\{\hspace{-3pt}\log\left(\frac{4\pi^2mn}{y}\right)\log\left(\frac{ny}{m}\right)\cosh\left(\frac{4\pi^2mn}{y}\right)-2\log(2n\pi)\bigg[\left(\sinh\operatorname{Shi}-\cosh\operatorname{Chi}\right)\left(\frac{4\pi^2mn}{y}\right)\nonumber\\
&\quad+\log\left(\frac{4\pi^2mn}{y}\right)\cosh\left(\frac{4\pi^2mn}{y}\right)\hspace{-4pt}\bigg]+\log^{2}\left(\frac{4\pi^2mn}{y}\right)\cosh\left(\frac{4\pi^2mn}{y}\right)+2\hspace{-3pt}\int_{0}^{\infty}\frac{u\cos(u)\log(u)\, du}{u^2+(4\pi^2mn/y)^2}\bigg\}\nonumber\\
&=\frac{4}{y}\sum_{n=1}^{\infty}\sum_{m=1}^{\infty}\int_{0}^{\infty}\frac{u\cos(u)\log(u/(2\pi n))}{u^2+(4\pi^2mn/y)^2}\, du,
\end{align*}
where in the last step, we used Lemma \ref{minusonelemma} with $w=4\pi^2mn/y$.

Next, an application of Theorem \ref{analoguedgkm} with $w=2\pi n/y$ and then of \eqref{dgkmresult} with $w=4\pi^2n/y$ in the second step further yields
\begin{align}\label{g3eval}
G_3(y)&=\frac{4}{y}\sum_{n=1}^{\infty}\left\{\sum_{m=1}^{\infty}\int_{0}^{\infty}\frac{u\cos(u)\log(uy/(2\pi n))}{u^2+(4\pi^2mn/y)^2}\, du-\log(y)\sum_{m=1}^{\infty}\int_{0}^{\infty}\frac{u\cos(u)}{u^2+(4\pi^2mn/y)^2}\, du\right\}\nonumber\\
&=\frac{1}{y}\sum_{n=1}^{\infty}\Bigg\{\psi_1\left(\frac{2\pi in}{y}\right)-\frac{1}{2}\log^{2}\left(\frac{2\pi in}{y}\right)+\psi_1\left(-\frac{2\pi in}{y}\right)-\frac{1}{2}\log^{2}\left(-\frac{2\pi in}{y}\right)+\frac{y}{4n}\nonumber\\
&\quad+\left(\gamma+\log(y)\right)\left(\psi\left(\frac{2\pi in}{y}\right)+\psi\left(-\frac{2\pi in}{y}\right)-2\log\left(\frac{2\pi n}{y}\right)\right)\Bigg\}\nonumber\\
&=\frac{1}{y}\sum_{n=1}^{\infty}\Bigg\{\psi_1\left(\frac{2\pi in}{y}\right)-\frac{1}{2}\log^{2}\left(\frac{2\pi in}{y}\right)+\psi_1\left(-\frac{2\pi in}{y}\right)-\frac{1}{2}\log^{2}\left(-\frac{2\pi in}{y}\right)+\frac{y}{4n}\Bigg\}\nonumber\\
&\quad+\frac{\left(\gamma+\log(y)\right)}{y}\sum_{n=1}^{\infty}\Bigg\{\left(\psi\left(\frac{2\pi in}{y}\right)+\psi\left(-\frac{2\pi in}{y}\right)-2\log\left(\frac{2\pi n}{y}\right)\right)\Bigg\},
\end{align}
where the validity of the last step results from the fact that invoking Theorem \ref{asymptotic-psi1}, once with $z=2\pi in/y$, and again with $z=-2\pi in/y$, and adding the two expansions yields
\begin{align}\label{asyseries}
\psi_1\left(\frac{2\pi in}{y}\right)-\frac{1}{2}\log^{2}\left(\frac{2\pi in}{y}\right)+\psi_1\left(-\frac{2\pi in}{y}\right)-\frac{1}{2}\log^{2}\left(-\frac{2\pi in}{y}\right)+\frac{y}{4n}=O_y\left(\frac{\log(n)}{n^2}\right).
\end{align}

Therefore, from \eqref{diffid}, \eqref{g1eval}, \eqref{g2eval} and \eqref{g3eval}, we arrive at
\begin{align*}
&\sum_{n=1}^{\infty}\frac{\log(n)}{e^{ny}-1}+\frac{1}{4}\log(2\pi)-\frac{1}{2y}\log^{2}(y)+\frac{\gamma^2}{2y}-\frac{\pi^2}{12y}\nonumber\\
&=\frac{1}{y}\sum_{n=1}^{\infty}\Bigg\{\psi_1\left(\frac{2\pi in}{y}\right)-\frac{1}{2}\log^{2}\left(\frac{2\pi in}{y}\right)+\psi_1\left(-\frac{2\pi in}{y}\right)-\frac{1}{2}\log^{2}\left(-\frac{2\pi in}{y}\right)+\frac{y}{4n}\Bigg\}\nonumber\\
&\quad+\frac{\left(\gamma+\log(y)\right)}{y}\sum_{n=1}^{\infty}\Bigg\{\left(\psi\left(\frac{2\pi in}{y}\right)+\psi\left(-\frac{2\pi in}{y}\right)-2\log\left(\frac{2\pi n}{y}\right)\right)\Bigg\},
\end{align*}
which, upon rearrangement, gives \eqref{translog}.

Now \eqref{translog1} can be immediately derived from \eqref{translog} by invoking \eqref{kanot} to transform the first series on the right-hand side of \eqref{translog}, multiplying both sides of the resulting identity by $y$ followed by simplification and rearrangement.
\qed

The theorem just proved allows us to obtain the complete asymptotic expansion of $\sum\limits_{n=1}^{\infty}d(n)\log(n)e^{-ny}$ as $y\to0$. This is derived next.

\subsection{Proof of Theorem \ref{logy0} }
From \eqref{asypsi}, as $y\to 0$,
\begin{equation*}
\psi\left(\frac{2\pi i n}{y} \right) \hspace{-2pt}\sim \log\left(\frac{2\pi i n}{y} \right) - \sum_{k=1}^\infty \frac{B_{2k}}{2k}\left(\frac{2\pi i n}{y} \right)^{-2k}
\hspace{-6pt} \text{and}  \hspace{.2cm}
\psi\left(-\frac{2\pi i n}{y} \right) \hspace{-2pt}\sim \log\left(-\frac{2\pi i n}{y} \right) - \sum_{k=1}^\infty \frac{B_{2k}}{2k}\left(-\frac{2\pi i n}{y} \right)^{-2k}\hspace{-5pt}.
\end{equation*}
Inserting the above asymptotics into the first sum of the right-hand side of \eqref{translog}, we obtain
\begin{align}\label{First sum}
\sum_{n=1}^\infty \left\{\log\left(\frac{2\pi n}{y} \right) -\frac{1}{2}\left(\psi\left(\frac{2\pi i n}{y} \right) + \psi\left(-\frac{2\pi i n}{y} \right) \right) \right\} &\sim \sum_{n=1}^\infty \sum_{k=1}^\infty \frac{(-1)^k B_{2k}}{2k}\left(\frac{2\pi n}{y} \right)^{-2k}\nonumber\\
&= \sum_{k=1}^\infty \frac{(-1)^k B_{2k}\zeta(2k)}{2k}\left(\frac{2\pi}{y} \right)^{-2k} \nonumber\\
&=-\frac{1}{2} \sum_{k=1}^\infty \frac{B_{2k}^2 y^{2k}}{(2k)(2k)!},
\end{align}
where in the last step we used Euler's formula 
\begin{equation}\label{eulerf}
\zeta(2 m ) =
	(-1)^{m +1} \displaystyle\frac{(2\pi)^{2 m}B_{2 m }}{2 (2 m)!}\hspace{5mm} (m\geq0).
	\end{equation}
	 Next, we need to find the asymptotic expansion of the second sum on the right-hand side of \eqref{translog}. 
Invoking Theorem \ref{asymptotic-psi1} twice, once with $z=2\pi in/y$, and again, with $z=-2\pi in/y$,  and inserting them into the second sum of the right-hand side of \eqref{translog}, we obtain
\begin{align}\label{Second sum}
&\sum_{n=1}^{\infty}\left\{\psi_1\left(\frac{2\pi in}{y}\right)+\psi_1\left(-\frac{2\pi in}{y}\right)-\frac{1}{2}\left(\log^{2}\left(\frac{2\pi in}{y}\right)+\log^{2}\left(-\frac{2\pi in}{y}\right)\right)+\frac{y}{4n}\right\}\nonumber\\
& \sim \sum_{n=1}^{\infty} \sum_{k=1}^{\infty} \frac{(-1)^k B_{2k}}{k}\left(\frac{y}{2\pi n}\right)^{2k}\left[\frac{1}{2k-1} + \sum_{j=1}^{2k-2}\frac{1}{j}-  \log \left(\frac{2\pi n}{y} \right) \right]\nonumber\\
&= \sum_{n=1}^{\infty} \sum_{k=1}^{\infty} \frac{(-1)^k B_{2k}}{k}\left(\frac{y}{2\pi n}\right)^{2k}\left[\frac{1}{2k-1} + \left(\sum_{j=1}^{2k-2}\frac{1}{j}-\log \left(\frac{2\pi}{y} \right)\right) -\log n \right]\nonumber\\
&= \sum_{k=1}^{\infty} \frac{(-1)^k B_{2k}}{k}\left(\frac{y}{2\pi}\right)^{2k}\left[\zeta(2k)\left\{\frac{1}{2k-1} + \left(\sum_{j=1}^{2k-2}\frac{1}{j}-\log \left(\frac{2\pi}{y} \right)\right)\right\}+\zeta'(2k) \right] \nonumber\\
&= -\sum_{k=1}^\infty\left[ \frac{B_{2k}^2 y^{2k}}{(2k-1)(2k)(2k)!} + \frac{B_{2k}^2 y^{2k}}{2k(2k)!}\left(\sum_{j=1}^{2k-2}\frac{1}{j}-\log \left(\frac{2\pi}{y} \right)\right) +  \frac{(-1)^{k+1} B_{2k}\zeta'(2k)}{k}\left(\frac{y}{2\pi}\right)^{2k}\right],
\end{align}
where in the penultimate step we used $\zeta'(s)=-\sum_{n=1}^{\infty}\log(n)n^{-s}$ for Re$(s)>1$, and in the last step we used \eqref{eulerf}. Substituting \eqref{First sum} and \eqref{Second sum} in \eqref{translog}, we see that as $y\to 0$,
\begin{align*}
&\sum_{n=1}^{\infty}\frac{\log(n) }{e^{ny}-1} \nonumber\\
&\sim -\frac{1}{4}\log(2\pi)+\frac{1}{2y}\log^{2}(y)-\frac{\gamma^2}{2y}+\frac{\pi^2}{12y}
-\frac{2}{y}(\gamma+\log(y))\left\{-\frac{1}{2} \sum_{k=1}^\infty \frac{B_{2k}^2 y^{2k}}{(2k)(2k)!}\right\}\nonumber\\
&\quad -\frac{1}{y}\sum_{k=1}^\infty\left[ \frac{B_{2k}^2 y^{2k}}{(2k-1)(2k)(2k)!} + \frac{B_{2k}^2 y^{2k}}{2k(2k)!}\left(\sum_{j=1}^{2k-2}\frac{1}{j}-\log \left(\frac{2\pi}{y} \right)\right) +  \frac{(-1)^{k+1} B_{2k}\zeta'(2k)}{k}\left(\frac{y}{2\pi}\right)^{2k}\right]\nonumber\\
& = -\frac{1}{4}\log(2\pi)+\frac{1}{2y}\log^{2}(y)-\frac{\gamma^2}{2y}+\frac{\pi^2}{12y} +\hspace{-2pt} \sum_{k=1}^{\infty} \frac{B_{2k} y^{2k-1}}{k}\hspace{-2pt}\left\{\frac{B_{2k}}{2(2k)!}\left(\gamma -\hspace{-2pt} \sum_{j=1}^{2k-1}\frac{1}{j}+\log (2\pi)\right)\hspace{-4pt}+\hspace{-2pt} \frac{(-1)^k \zeta'(2k)}{(2\pi)^{2k}}\right\}\nonumber\\
&= -\frac{1}{4}\log(2\pi)+\frac{1}{2y}\log^{2}(y)-\frac{\gamma^2}{2y}+\frac{\pi^2}{12y} + \frac{y}{6}\bigg\{\frac{1}{24}(\gamma-1+\log(2\pi)) -\frac{1}{4\pi^2} \bigg[\frac{\pi^2}{6}(\gamma + \log(2\pi) \nonumber\\
&\quad- 12\log (A)) \bigg] \bigg\}
+\sum_{k=2}^{\infty} \frac{B_{2k} y^{2k-1}}{k}\left\{\frac{B_{2k}}{2(2k)!}\left(\gamma - \sum_{j=1}^{2k-1}\frac{1}{j}+\log (2\pi)\right)+ \frac{(-1)^k \zeta'(2k)}{(2\pi)^{2k}}\right\}\nonumber\\
&= \frac{1}{2y}\log^{2}(y) + \frac{1}{y}\left(\frac{\pi^2}{12} - \frac{\gamma^2}{2} \right)-\frac{1}{4}\log(2\pi) +\frac{y}{12}\left(\log (A) - \frac{1}{12} \right) \nonumber\\
&\quad +\sum_{k=2}^{\infty} \frac{B_{2k} y^{2k-1}}{k}\left\{\frac{B_{2k}}{2(2k)!}\left(\gamma - \sum_{j=1}^{2k-1}\frac{1}{j}+\log (2\pi)\right)+ \frac{(-1)^k \zeta'(2k)}{(2\pi)^{2k}}\right\}.
\end{align*}
This completes the proof.
\qed

An application of the result proved above in the theory of the moments of the Riemann zeta function is given next.
\subsection{Proof of Theorem \ref{moments}}
	Our goal is to link $\int\limits_{0}^{\infty}\zeta\left(\frac{1}{2}-it\right)\zeta'\left(\frac{1}{2}+it\right)e^{-\delta t}\, dt$ with $\sum\limits_{n=1}^{\infty}\frac{\log(n) }{\exp\left(2 \pi i n e^{-i\d}\right)-1}$ and then invoke Theorem \ref{logy0}. Our treatment is similar to that of Atkinson \cite{atkinson}.
	
First note that, for $\textup{Re}(y)>0$,
\begin{align}\label{before_interchange}
	\sum_{n=1}^{\infty} d(n)\log(n) e^{-ny}=\sum_{n=1}^{\infty} d(n)\log( n) {1\over 2\pi i}\int\limits_{(c)}\Gamma(s)(ny)^{-s}ds,
\end{align}
where $c=\textup{Re}(s)>1$. Using $(1*\log)(n)=\sum_{d|n}\log d=\frac{1}{2}d(n)\log( n)$ and interchanging the order of summation and integration on the right-hand side of \eqref{before_interchange}, we obtain
\begin{align}\label{base1}
	\sum_{n=1}^{\infty} d(n)\log( n) e^{-ny}&={1\over 2\pi i}\int\limits_{(c)}\Gamma(s)\left( \sum_{n=1}^{\infty} {d(n)\log (n)\over n^s }\right) y^{-s}ds\nonumber\\  
	&={2\over 2\pi i}\int\limits_{(c)}\Gamma(s) \sum_{n=1}^{\infty} {(1*\log) (n)\over n^s } y^{-s}ds\nonumber\\  
	&=-{2\over 2\pi i}\int\limits_{(c)}\Gamma(s) \z(s)\z'(s)  y^{-s}ds.
\end{align} 
 Letting $y=2 \pi i e^{-i\d}$ so that $\textup{Re}(\d)>0$ , we deduce that 
 \begin{align}\label{seriesdnlogn}
 	\sum_{n=1}^{\infty} d(n)\log( n) \exp\left(-2 \pi i n e^{-i\d}\right)&=-{2\over 2\pi i}\int\limits_{(c)}\Gamma(s) \z(s)\z'(s) (2 \pi i e^{-i\d})^{-s}ds.
 \end{align}
We would like to shift the line of integration from Re$(s)=c$ to Re$(s)=1/2$. The integrand in \eqref{seriesdnlogn} has a third order pole at $s=1$. Therefore, using Cauchy's residue theorem on the rectangular contour with sides $\left[c -iT, c+iT \right],  \left[c+iT, {1\over 2} + iT \right], \left[{1\over 2} + iT, {1\over 2}-iT \right]$ and $ \left[{1\over 2}-iT, c-iT \right] $, where $T>0$, noting that \eqref{strivert} implies that the integrals along the horizontal line segments tend to zero as $T \to \infty$, we obtain
\begin{align}\label{Cauchythmdnlogn}
	&{1\over 2\pi i}\int\limits_{(c)}\Gamma(s) \z(s)\z'(s) (2 \pi i e^{-i\d})^{-s}ds\nonumber\\
	&={1\over 2\pi i}\int\limits_{({1\over 2})}\Gamma(s) \z(s)\z'(s) (2 \pi i e^{-i\d})^{-s}ds+{\Res_{s=1}}\Gamma(s) \z(s)\z'(s) (2 \pi i e^{-i\d})^{-s}\nonumber \\
	&= {1\over 4\pi i}\int\limits_{({1\over 2})}{\zeta(1-s) \z'(s) ( i e^{-i\d})^{-s}\over \cos\left({\pi s\over 2} \right) }ds+{\frac{1}{2 \pi i e^{-i\d}}}\left({\gamma^2 \over 2 }-{\pi^2\over 12}-\frac{1}{2}\log ^2(2 \pi i e^{-i\d}) \right) ,
\end{align}
where, in the last step, we used the functional equation \eqref{zetaasym} with $s$ replaced by $1-s$. 
Hence, from \eqref{seriesdnlogn} and \eqref{Cauchythmdnlogn}, we arrive at 
\begin{align}\label{equation zeta1}
	{e^{-{i\d\over 2}}\over 2 i}\int\limits_{(\frac{1}{2})}{\zeta(1-s) \z'(s) e^{-is\left({\pi\over 2}-\d \right) }\over \cos\left({\pi s\over 2} \right) }ds&=-\pi e^{-{i\d\over 2}}\sum_{n=1}^{\infty} d(n)\log( n) \exp\left(-2 \pi i n e^{-i\d}\right)\nonumber\\
	&\quad+{ie^{{i\d\over 2}}\over 2}\left({\gamma^2 }-{\pi^2\over 6}-{\log ^2(2 \pi i e^{-i\d})} \right).
\end{align}
We next consider the difference of the integrals on the left-hand sides of  \eqref{mvtder} and \eqref{equation zeta1} and employ the change of variable $s={1\over 2}+it$ in the second integral in the first step below to see that
\begin{align}
	\int_{0}^{\infty}&\z\left({1\over 2}-it\right)\z'\left({1\over 2}+it\right)e^{-\d t}dt-{e^{-{i\d\over 2}}\over i}\int_{\left( 1\over 2\right) }\frac{\z\left({1-s}\right)\z'\left(s\right)}{2\cos\left(\pi s \over 2 \right) }e^{-is \left({\pi\over 2}-\d \right)}ds\nonumber \\
	&=\int_{0}^{\infty}\z\left({1\over 2}-it\right)\z'\left({1\over 2}+it\right)e^{-\d t}dt-{e^{-{i\d\over 2}}}\int_{-\infty}^{\infty}\frac{\z\left({1\over 2}-it\right)\z'\left({1\over 2}+it\right)}{\left(e^{{i\pi  \over 2 }\left({1\over 2}+it\right)} +e^{-{i\pi  \over 2 }\left({1\over 2}+it\right)}\right) }e^{-i\left({1\over 2}+it \right) \left({\pi\over 2}-\d \right)}dt\nonumber \\
	&=\int_{0}^{\infty}\z\left({1\over 2}-it\right)\z'\left({1\over 2}+it\right)e^{-\d t}dt+\int_{0}^{\infty}\frac{\z\left({1\over 2}+it\right)\z'\left({1\over 2}-it\right)}{\left( e^{{i\pi  \over 4 }+{\pi t\over 2}}  +e^{-{i\pi  \over 4 }-{\pi t\over 2}} \right)  }e^{-{i\pi  \over 4 }-{\pi t\over 2}+\d t}dt\nonumber \\
	& \qquad\qquad\qquad\qquad\qquad\qquad\qquad\qquad
	-\int_{0}^{\infty}\frac{\z\left({1\over 2}-it\right)\z'\left({1\over 2}+it\right)}{\left( e^{{i\pi  \over 4 }-{\pi t\over 2}}  +e^{-{i\pi  \over 4 }+{\pi t\over 2}} \right)  }e^{-{i\pi  \over 4 }+{\pi t\over 2}-\d t}dt\nonumber \\
	&=\int_{0}^{\infty}\frac{\z\left({1\over 2}-it\right)\z'\left({1\over 2}+it\right)}{\left( e^{{i\pi  \over 4 }-{\pi t\over 2}}  +e^{-{i\pi  \over 4 }+{\pi t\over 2}} \right)  }e^{{i\pi  \over 4 }-{\pi t\over 2}-\d t}dt
	+\int_{0}^{\infty}\frac{\z\left({1\over 2}+it\right)\z'\left({1\over 2}-it\right)}{\left( e^{{i\pi  \over 4 }+{\pi t\over 2}}  +e^{-{i\pi  \over 4 }-{\pi t\over 2}} \right)  }e^{-{i\pi  \over 4 }-{\pi t\over 2}+\d t}dt.\label{Integrals split}
\end{align}

From \cite[p. 127, Eq. 20]{gonek}, we have
\begin{align*}
	\z\left({1\over 2}-it\right)\ll |t|^{{1\over 4}+{\e \over 2}}\hspace{10pt}\text{and} \hspace{10pt}	\z'\left({1\over 2}+it\right)\ll |t|^{{1\over 4}+{\e \over 2}},
\end{align*}
 whence, by reverse triangle inequality, we have 
 \begin{align*}
 	\left|\frac{\z\left({1\over 2}-it\right)\z'\left({1\over 2}+it\right)}{\left( e^{{i\pi  \over 4 }-{\pi t\over 2}}  +e^{-{i\pi  \over 4 }+{\pi t\over 2}}  \right) }e^{{i\pi  \over 4 }-{\pi t\over 2}-\d t}\right|\ll\frac{|t|^{{1\over 2}+\epsilon}e^{-\textup{Re}(\d) t-{\pi t \over 2}}}{e^{{\pi t \over 2}}-e^{-{\pi t \over 2}}}=\frac{|t|^{{1\over 2}+\epsilon}e^{(-\textup{Re}(\d)-\pi) t}}{1-e^{{-\pi t }}}.
 \end{align*}
This implies that the first integral is analytic for $\textup{Re}(\d) >-\pi $. Similarly, the second integral is analytic for $\textup{Re}(\d) <\pi$. Consequently, both integrals of \eqref{Integrals split} are analytic in $\d$ in $|\d|<\pi$.
 
 From \eqref{equation zeta1} and above discussion, we can say that the expression  
 \begin{align}\label{defphi}
 	\phi (\d)&:=\int_{0}^{\infty}\z\left({1\over 2}-it\right)\z'\left({1\over 2}+it\right)e^{-\d t}dt+\pi e^{-{i\d\over 2}}\sum_{n=1}^{\infty} d(n)\log( n) \exp\left(-2 \pi i n e^{-i\d}\right)\nonumber\\
 	&\quad-ie^{{i\d\over 2}}\left({\gamma^2 \over 2 }-{\pi^2\over 12}-{\log ^2(2 \pi i e^{-i\d})\over 2} \right)
 \end{align}
is an analytic function of $\d$ in $|\d|<\pi$ except at $\d=-{\pi \over 2}$, because the third term of \eqref{defphi} is not analytic at $\d=-{\pi \over2}$. Hence, $\phi(\d)+ie^{{i\d\over 2}}\left({\gamma^2 \over 2 }-{\pi^2\over 12}-{\log ^2(2 \pi i e^{-i\d})\over 2} \right)$ can be expressed as a power series in $\d$ for $\{\delta:|\d |<\pi\}\backslash\{-{\pi \over 2}\}$. Letting $y=2\pi i  (e^{-i\d}-1)$ in the asymptotic expansion of $\sum_{n=1}^{\infty} d(n)\log (n) e^{-ny}$ in \eqref{logy0eqn}, we find that as $\d \to 0$,
\begin{align}\label{estdnlogn}
&\pi e^{-{i\d\over 2}}\sum_{n=1}^{\infty} d(n)\log( n) \exp\left(-2 \pi i n e^{-i\d}\right)\nonumber \\
&=\pi e^{-{i\d\over 2}}\sum_{n=1}^{\infty} d(n)\log( n) \exp\left(-2 \pi i n (e^{-i\d}-1)\right)\nonumber\\
&= \pi e^{-{i\d\over 2}}\left[ {1\over 2 \pi i (e^{-i\d}-1)}\left(\log ^2(2 \pi i (e^{-i\d}-1))+ {\pi^2\over 6}-{\gamma^2  }\right) -\frac{1}{2}\log(2\pi)\right. \nonumber \\
&\quad+\sum_{k=2}^{m-1}\frac{B_{2k}(2\pi i (e^{-i\d}-1))^{2k-1}}{k}\left\lbrace {B_{2k}\over (2k)!}\left(\g-\sum_{j=1}^{2k-1}{1\over j}+\log(2\pi) \right)+\frac{2(-1)^k\z'(2k)}{(2\pi)^{2k}}  \right\rbrace  \nonumber \\
&\quad+\left. {2 \pi i (e^{-i\d}-1)\over 6}\left(\log A -{1\over 12}\right) +O\left( (e^{-i\d}-1)^{2m-1}\right)\right]  \nonumber \\
&={1\over 4\sin\left(\d\over 2\right)}\left(\log ^2(2 \pi \d)+ {\pi^2\over 6}-{\gamma^2  }\right)-\frac{\pi }{2} e^{-{i\d\over 2}}\log (2\pi)\nonumber \\
&\quad +\sum_{k=1}^{m-1}\pi c_ke^{-{i\d\over 2}}( 2 \pi i (e^{-i\d}-1))^{2k-1}+O\left( (e^{-i\d}-1)^{2m-1}\right) \nonumber \\
&={1\over 4\sin\left(\d\over 2\right)}\left( {\pi^2\over 6}-{\gamma^2  }+\log ^2(2 \pi \d)\right)  +\sum_{k=0}^{2m-2}d_k\d^k +O(\d^{2m-1}),
\end{align} 
as $\d\to 0$, where $c_k$ and $d_k$ are effectively computable constants. 
Consequently, from \eqref{defphi} and \eqref{estdnlogn}, as $\d\to 0$, we get
\begin{align*}
	\int_{0}^{\infty}&\z\left({1\over 2}-it\right)\z'\left({1\over 2}+it\right)e^{-\d t}dt={1\over 4\sin\left(\d\over 2\right)}\left( {\gamma^2}-{\pi^2\over 6}-\log ^2(2 \pi \d)\right)  +\sum_{k=0}^{2m-2}d_k\d^k +O(\d^{2m-1}).
\end{align*}
This completes the proof.
\qed

\section{Concluding remarks}
The main highlight of this paper was to derive \eqref{translog}, that is, an exact transformation for the series $\displaystyle\sum_{n=1}^{\infty}\frac{\log(n)}{e^{ny}-1}$, or equivalently, for $\sum_{n=1}^{\infty}d(n)\log(n)e^{-ny}$, where $\textup{Re}(y)>0$. Such a transformation was missing from the literature up to now. One of the reasons for this could be the lack of availability of the transformation in \eqref{maineqn} until \cite{dkk} appeared, which, in turn, resulted from evaluating an integral with a combination of Bessel functions as its associated kernel. This underscores the importance of the applicability of integral transforms in number theory.

 In addition, it is to be emphasized that in order to derive \eqref{translog}, several new intermediate results, interesting in themselves, had to be derived, for example, the ones given in Theorems \ref{asymptotic-psi1},  \ref{kloosterman-type}, \ref{analoguedgkm}, and \ref{dzhrbashyan}. This also shows how important $\psi_1(z)$, and, in general, $\psi_k(z)$, are in number theory, and further corroborates Ishibashi's quote given in the introduction.

The transformation in  \eqref{translog} has a nice application in the study of moments of $\zeta(s)$, namely, to obtain the asymptotic expansion of 
$ \int_{0}^{\infty}\zeta\left(\frac{1}{2}-it\right)\zeta'\left(\frac{1}{2}+it\right)e^{-\delta t}\, dt$ as $\delta\to0$.  However, considering the fact that \eqref{translog} holds for \emph{any} $y$ with Re$(y)>0$, we expect further applications of it in the future, in particular, in the theory of modular forms.

A generalization of \eqref{translog}, equivalently, of \eqref{translog1},  may be obtained for the series $\sum_{n=1}^{\infty}d(n)\log^{k}(n)e^{-ny}$. This would require differentiating \eqref{maineqn} $k$ times with respect to $a$ and then letting $a\to0$. This would involve Ishibashi's higher analogues of Deninger's $R(z)$ defined in \eqref{rkz}. Ishibashi \cite[Theorem 1]{ishibashi} gave a Plana-type formula for $R_k(x), x>0$, which is easily seen to hold for complex $x$ in the right-half plane Re$(x)>0$. This formula involves a polynomial in $\log(t)$ defined by
	$S_k(t):=\sum_{j=0}^{k-1}a_{k, j}\log^{j}(t)$, 
where $a_{k, j}$ are recursively defined by
	$a_{k, j}=-\sum_{r=0}^{k-2}\binom{k-1}{r}\G^{(k-r-1)}(1)a_{r+1, j}, 0\leq j\leq k-2$,
with $a_{1,0}=1$ and $a_{k, k-1}=1$. 

Observe that $S_1(t)=1$ and $S_2(t)=\g+\log(t)$, and so the numerators of the summands of the series on the left-hand sides of \eqref{kanot} and \eqref{translog1} are $S_1(ny)$ and $S_2(ny)$ respectively. In view of this, we speculate the left-hand side of \eqref{translog1} to generalize to
$\sum\limits_{n=1}^{\infty}\displaystyle\frac{S_k(ny)}{e^{ny}-1}$ and the right-hand side to involve $\psi_k(z)$.

Since \eqref{base1} can be generalized to
\begin{align*}
	\sum_{n=1}^{\infty}\frac{\log^{k}(n)}{e^{ny}-1}=-\frac{1}{2\pi i}\int_{(c)}\Gamma(s)\zeta(s)\zeta^{(k)}(s)y^{-s}\, ds\hspace{5mm}(c=\textup{Re}(s)>1, \textup{Re}(y)>0),
	\end{align*}
on account of the fact that $\zeta^{(k)}(s)=(-1)^k\sum\limits_{n=1}^{\infty}\log^{k}(n)n^{-s}$, once a generalization of \eqref{translog1} is obtained, it would possibly give us the complete asymptotic expansion of $ \int_{0}^{\infty}\zeta\left(\frac{1}{2}-it\right)\zeta^{(k)}\left(\frac{1}{2}+it\right)e^{-\delta t}\, dt$ as $\delta\to0$.\\

\begin{center}
	\textbf{Acknowledgements}
	\end{center}
	The authors sincerely thank Christopher Deninger, Nico M. Temme, Rahul Kumar, Shigeru Kanemitsu, Steven M. Gonek, Alessandro Languasco and Caroline Turnage-Butterbaugh for helpful discussions. They also thank Professor Inese Bula from University of Latvia for sending a copy of \cite{riekstins}. The first author is a National Postdoctoral Fellow at IIT Gandhinagar funded by the grant. The second author's research is funded by the Swarnajayanti Fellowship grant SB/SJF/2021-22/08. Both sincerely thank SERB for its support. The third author acknowledges the support of CSIR SPM Fellowship under the grant number SPM-06/1031(0281)/2018-EMR-I.

	\end{document}